\theoremstyle{plain}
\newtheorem{theorem}{Theorem}[section]
\newtheorem{conjecture}[theorem]{Conjecture}
\newtheorem{corollary}[theorem]{Corollary}
\newtheorem{lemma}[theorem]{Lemma}
\newtheorem{proposition}[theorem]{Proposition}
\theoremstyle{definition}
\newtheorem{definition}[theorem]{Definition}
\newtheorem*{ack}{Acknowledgement}
\theoremstyle{remark}
\newtheorem{remark}[theorem]{Remark}
\newtheorem*{remark*}{Remark}
\newtheorem*{notation*}{Notation and Terminology}
\numberwithin{equation}{section}
\DeclareMathOperator{\Aut}{Aut}
\DeclareMathOperator{\GL}{GL}
\DeclareMathOperator{\Mat}{Mat}
\DeclareMathOperator{\SL}{SL}
\DeclareMathOperator{\id}{id}
\DeclarePairedDelimiter{\abs}{\lvert}{\rvert}
\DeclareMathOperator{\Bim}{Bim}
\DeclareMathOperator{\Fix}{Fix}
\DeclarePairedDelimiterX{\pair}[2]{\langle}{\rangle}{#1,#2}
\DeclareMathOperator{\NS}{NS}
\newcommand{\Romannum}[1]{\uppercase\expandafter{\romannumeral #1}}
\newcommand{\romannum}[1]{\romannumeral #1\relax}
\title[Automorphism Groups]
{Automorphism Groups of Compact Complex Surfaces: T-Jordan Property, Tits Alternative and Solvability}
\author{Jia~Jia}
\address{National University of Singapore, Singapore 119076, Republic of Singapore}
\email{jia\_jia@u.nus.edu}
\subjclass[2020]{
	08A35,
	% Automorphisms and endomorphisms of algebraic structures
	% 14E30,
	% Minimal model program (Mori theory, extremal rays)
	14J50,
	% Automorphisms of surfaces and higher-dimensional varieties
	14E07,
	% Birational automorphisms, Cremona group and generalizations
	32M05,
	% Complex Lie groups, group actions on complex spaces
	32Q15.
	% Kähler manifolds
}
\keywords{automorphism group,
	compact complex surface,
	torsion group,
	virtually abelian,
	virtually nilpotent,
	Tits alternative,
	virtually solvable,
	derived length}
\begin{document}

\begin{abstract}
	Let \(X\) be a (smooth) compact complex surface.
	We show that every torsion subgroup of the biholomorphic automorphism group \(\Aut(X)\)
	is virtually nilpotent.
	Moreover, we study the Tits alternative of \(\Aut(X)\) and virtual derived length of virtually solvable subgroups of \(\Aut(X)\).
	% Let $X$ be a (smooth) compact complex surface.
	% We show that the torsion subgroup of the biholomorphic automorphisms group $\operatorname{Aut}(X)$ is virtually nilpotent.
	% Moreover, we study the Tits alternative of $\operatorname{Aut}(X)$ and virtual derived length of virtually solvable subgroups of $\operatorname{Aut}(X)$.
\end{abstract}

\maketitle
\setcounter{tocdepth}{1}
\tableofcontents

\section{Introduction}\label{sec:introduction}

\subsection{T-Jordan Property}\label{sub:t_jordan_property}

The below theorem is our starting point.

\begin{theorem}[\cite{lee1976torsion}]\label{thm:connected_lie_group_t_jordan}
	% Any connected Lie group is T-Jordan.
	Let \(G\) be a connected Lie group.
	Then there is a constant \(J = J(G)\) such that every torsion subgroup of \(G\)
	contains a (normal) abelian subgroup of index \(\leq J\).
\end{theorem}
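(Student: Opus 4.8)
The plan is to analyze a torsion subgroup through its closure and reduce the whole statement to a bound on the number of connected components of that closure. Let \(T \le G\) be a torsion subgroup and set \(H = \overline{T}\). By Cartan's closed subgroup theorem \(H\) is an embedded Lie subgroup, its identity component \(H_0\) is open in \(H\), and the component group \(H/H_0\) is discrete. Since \(T\) is dense in \(H\) and \(H_0\) is open, \(T \cap H_0\) is dense in \(H_0\) and the composite \(T \hookrightarrow H \twoheadrightarrow H/H_0\) is surjective; in particular \(\lvert H/H_0\rvert = [T : T \cap H_0]\). Thus if I can produce an abelian subgroup out of \(T \cap H_0\) and bound \(\lvert H/H_0\rvert\) by a constant depending only on \(G\), I am done.

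The first step is to show that \(H_0\) is a compact torus. Using a Zassenhaus (Jordan) neighborhood \(\Omega \ni e\)—one for which every subgroup generated by its elements has nilpotent closure—together with the fact that a neighborhood of \(e\) generates \(H_0\), I get that \(H_0\) is a connected nilpotent Lie group carrying the dense torsion subgroup \(T \cap H_0\). Now a connected nilpotent Lie group \(N\) has a unique maximal compact subgroup, a central torus \(\mathbb{T}^r\), and \(N/\mathbb{T}^r\) is simply connected nilpotent, hence diffeomorphic to \(\mathbb{R}^k\) and torsion-free; every torsion element of \(N\) therefore lies in \(\mathbb{T}^r\). Applied to \(N = H_0\) with its dense torsion subgroup, this forces \(H_0 = \mathbb{T}^r\) with \(r \le \dim G\). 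Consequently \(A := T \cap H_0\) is abelian and \([T : A] = \lvert H/H_0\rvert\).

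It remains to bound the number of components of \(H\), which I expect to be the main obstacle. Conjugation gives a homomorphism \(H/H_0 \to \Aut(\mathbb{T}^r) = \GL_r(\mathbb{Z})\) whose image is a finite subgroup, of order bounded by Minkowski's bound on finite subgroups of \(\GL_r(\mathbb{Z})\) (and \(r \le \dim G\)). The kernel is \(C_H(H_0)/H_0\), where \(C_H(H_0)\) is the centralizer of the torus \(H_0\) in \(H\); running the density argument again shows its identity component is a torus inside \(H_0\), hence equal to \(H_0\), so \(C_H(H_0)/H_0\) is a discrete torsion group. I would bound it by induction on \(\dim G\): it is a torsion subgroup of the Lie group \(C_G(H_0)/H_0\), and when \(\dim C_G(H_0) < \dim G\) the inductive hypothesis applies. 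The delicate point is the remaining case \(H_0 \subseteq Z(G)\), where conjugation acts trivially and Minkowski gives nothing; this is exactly the central-extension obstruction, and controlling it—equivalently, ruling out torsion subgroups whose closures have arbitrarily many components—is where the uniformity of \(J(G)\) must really be earned.

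Finally, once \(A\) is abelian of index \(\le J_0 = J_0(G)\), its normal core \(\bigcap_{t \in T} tAt^{-1}\) is normal of index \(\le J_0!\), and a subgroup of an abelian group is abelian, so this core is a normal abelian subgroup of index \(\le J_0!\); taking \(J = J_0!\) gives the theorem. The essential external inputs are Cartan's closed subgroup theorem, the existence of a Zassenhaus neighborhood, the central-torus description of maximal compact subgroups of connected nilpotent Lie groups, and Minkowski's finiteness bound for \(\GL_r(\mathbb{Z})\); alternatively, once one knows \(H = \overline{T}\) is compact (hence linear), Schur's theorem on torsion subgroups of \(\GL_n(\mathbb{C})\) finishes directly, and the component bound above is precisely what guarantees that compactness with a uniform constant.
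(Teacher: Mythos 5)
The paper offers no proof of this statement: it is quoted directly from Lee's article \cite{lee1976torsion} and used as a black box, so there is no internal argument to measure yours against. Judged on its own, your proposal contains a genuine gap, and the step it hinges on would in fact fail. You propose to take \(A = T\cap H_0\) with \(H=\overline{T}\) and to bound \([T:A]=\lvert H/H_0\rvert\) by a constant depending only on \(G\). That quantity is not bounded: take \(G=SO(3)\) and \(T=D_n\) a dihedral group of order \(2n\); then \(H=T\) is finite, \(H_0=\{1\}\), and \([T:T\cap H_0]=2n\to\infty\). The theorem holds for these subgroups only because \(D_n\) has a cyclic subgroup of index \(2\), which is an abelian subgroup \emph{not} of the form \(T\cap H_0\). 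Relatedly, your induction for the kernel \(C_H(H_0)/H_0\) fails to reduce the dimension exactly when \(H_0\) is trivial (then \(C_G(H_0)=G\)), i.e.\ for finite, possibly non-abelian, subgroups --- which is Jordan's theorem itself, the genuinely hard core of the statement; Minkowski's bound on \(\GL_r(\mathbb{Z})\) contributes nothing there. So the difficulty you flag as a ``delicate remaining case'' is not a boundary phenomenon: essentially all of the content of the theorem sits in it. (Your analysis of \(H_0\) --- that a connected Lie group with a dense torsion subgroup is a central torus, via torsion-freeness of simply connected nilpotent groups --- is correct, but it is the easy half.)

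Your closing remark points at the right repair, with the logical dependency inverted. One does not need a bound on \(\lvert H/H_0\rvert\) to obtain compactness of \(\overline{T}\); a compact Lie group automatically has finitely many components, just not boundedly many. The standard route is: first show that \(\overline{T}\) is compact (this requires its own argument and does not follow from any component count); then use that every compact subgroup of a connected Lie group is conjugate into a fixed maximal compact subgroup \(K\), which admits a faithful representation \(K\hookrightarrow \GL_N(\mathbb{C})\) with \(N=N(G)\) --- this conjugacy is what makes the constant uniform over all torsion subgroups; finally apply the Jordan--Schur theorem in \(\GL_N(\mathbb{C})\). Your normal-core argument at the end is correct and can be appended to that proof unchanged.
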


Inspired by the \emph{Jordan property} introduced by Popov \cite{popov2011makarlimanov}*{Definition~2.1},
we propose the following generalisation.

\begin{definition}
	A group \(G\) is called \emph{T-Jordan} (alternatively, we say that \(G\) has the \emph{T-Jordan} property)
	if there is a constant \(J(G)\) such that every torsion subgroup \(H\) of \(G\) has an abelian subgroup \(H_1\)
	with the index \([H:H_1]\leq J(G)\).
\end{definition}

For a compact complex manifold \(X\), we denote by \(\Aut(X)\) the group of biholomorphic automorphisms.
As in \cite{popov2011makarlimanov}, we propose the following conjecture.

\begin{conjecture}\label{conj:main_t_jordan}
	Let \(X\) be a compact complex manifold.
	Then \(\Aut(X)\) is T-Jordan.
\end{conjecture}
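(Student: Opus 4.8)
The plan is to split \(\Aut(X)\) through the exact sequence
\[
1 \to \Aut^0(X) \to \Aut(X) \xrightarrow{\pi} \Gamma \to 1,
\]
where \(\Aut^0(X)\) is the identity component, a connected complex Lie group, and \(\Gamma = \Aut(X)/\Aut^0(X)\) is discrete. A torsion subgroup \(H \leq \Aut(X)\) then sits in an extension
\[
1 \to H_0 \to H \to \bar H \to 1
\]
with \(H_0 = H \cap \Aut^0(X)\) torsion in the connected part and \(\bar H \leq \Gamma\) torsion in the discrete quotient. First I would bound each layer separately and then reassemble a uniform index bound.

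For the connected layer, \cref{thm:connected_lie_group_t_jordan} applies verbatim: \(H_0\) contains an abelian subgroup of index at most \(J(\Aut^0(X))\), a constant depending only on \(X\). For the discrete layer, the idea is that \(\Gamma\) acts—up to a controlled kernel—on the integral cohomology \(H^*(X,\mathbb{Z})\), a finitely generated free abelian group, giving a representation \(\Gamma \to \GL_N(\mathbb{Z})\). By Minkowski's theorem \(\GL_N(\mathbb{Z})\) has a torsion-free subgroup of finite index \(m(N)\), so every torsion subgroup of \(\GL_N(\mathbb{Z})\) is finite of order \(\leq m(N)\); hence the image of \(\bar H\) is finite and boundedly small.

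The crux is controlling the kernel of the cohomology action, i.e.\ showing that automorphisms acting trivially on \(H^*(X,\mathbb{Z})\) differ from \(\Aut^0(X)\) only by a finite group of bounded order. In the Kähler case this is the Fujiki--Lieberman theorem: the subgroup fixing a Kähler class has finitely many components. Combining Fujiki--Lieberman with Minkowski and \cref{thm:connected_lie_group_t_jordan} then yields T-Jordan for compact Kähler manifolds. For a compact complex surface I would run through the Enriques--Kodaira classification: when \(X\) is Kähler (equivalently \(b_1\) even) the above applies, while in the non-Kähler classes (properly elliptic, Kodaira, class VII, and so on) one analyzes \(\Aut^0(X)\) and the cohomological representation case by case, exploiting that the relevant lattice is small and that \(\Aut^0(X)\) is explicitly understood.

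The main obstacle is precisely the non-Kähler setting, where no single Fujiki--Lieberman statement is available and the kernel of the cohomology action need not be virtually connected a priori; one must instead use surface-specific structure (minimal models, the elliptic or quasi-bundle fibration, the small Picard number of class VII surfaces) to pin down both \(\Aut^0(X)\) and the residual discrete symmetries. A secondary difficulty is assembling a uniform bound from the extension: even with \(H_0\) and \(\bar H\) both virtually abelian of bounded index, extracting an abelian subgroup of bounded index in \(H\) requires controlling the commutator pairing between the two layers—which is why settling for \emph{virtual nilpotence} first, as in the abstract, is the natural intermediate target before the full bounded abelian index of the T-Jordan conjecture.
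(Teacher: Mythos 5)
The statement you are trying to prove is stated in the paper as a \emph{conjecture}, and the paper does not prove it: the author explicitly says that even for smooth compact complex surfaces the T-Jordan property is only established outside the class \(\Xi_0\) of minimal class \Romannum{7} surfaces with \(a(X)=0\), \(b_2(X)>0\) and no curves (\cref{prop:main_t_jordan}), and that for \(X\in\Xi_0\) only the weaker statement that every torsion subgroup is virtually abelian --- with no uniform index bound --- is available (\cref{prop:main_class_vii_no_curve}). Your proposal is a strategy outline rather than a proof, and the obstacles you name at the end are not secondary difficulties to be cleaned up later; they are exactly where the argument fails, already in dimension two.

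Concretely: your plan hinges on showing that the kernel of the cohomological representation differs from \(\Aut^0(X)\) by a finite group of bounded order. In the K\"ahler (or Fujiki class \(\mathcal{C}\)) case this is indeed Fujiki--Lieberman and is how \cref{thm:class_c} is obtained, so that part of your sketch matches the paper's route. But for a surface in \(\Xi_0\) there is no invariant curve, no equivariant fibration, and no K\"ahler class to feed into Fujiki--Lieberman; the kernel of the action on \(H^*(X,\mathbb{Z})\) need not be virtually connected, and \(\Aut^0(X)\) can be trivial while the discrete part is large. The paper's substitute argument (\cref{prop:main_class_vii_no_curve}) takes a nontrivial element \(g\) of a torsion subgroup \(G\), passes to its centraliser, uses \(\abs{\Fix(g)}=b_2(X)\) to find a common fixed point after passing to a subgroup of index \(\leq b_2(X)!\), and embeds the result into \(\GL_2(\mathbb{C})\) via \cref{lem:fix_point_torsion_embedding}. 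The index of the centraliser of \(g\) in \(G\) is the size of a conjugacy class and is not bounded independently of \(G\), which is precisely why this yields ``virtually abelian'' but not T-Jordan. Your sketch contains no idea that would supply the missing uniform bound, so it does not prove the conjecture --- nor does the paper claim to.
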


Using the equivariant K\"ahler model,
Meng and the author gave an affirmative answer to \cref{conj:main_t_jordan}
for compact complex manifolds in Fujiki's class \(\mathcal{C}\).
A compact complex space is in \emph{Fujiki's class \(\mathcal{C}\)} if
it is the meromorphic image of a compact K\"ahler manifold
(cf.~\cite{fujiki1978automorphism}*{Definition~1.1}).

However,
since we only use \cref{thm:class_c} below in dimension two
and the classical result of Fujiki \cite{fujiki1978automorphism}*{Theorem~4.8}
and Lieberman \cite{lieberman1978compactness}*{Proposition~2.2} is enough for its proof.

\begin{theorem}[cf.~{\cite{jia2022equivariant}*{Corollary~1.5}}]\label{thm:class_c}
	Let \(X\) be a compact complex space which is in Fujiki's class \(\mathcal{C}\).
	Then \(\Aut(X)\) is T-Jordan.
\end{theorem}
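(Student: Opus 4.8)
The plan is to exploit the structure theorem of Fujiki \cite{fujiki1978automorphism}*{Theorem~4.8} and Lieberman \cite{lieberman1978compactness}*{Proposition~2.2} to reduce the assertion to \cref{thm:connected_lie_group_t_jordan} applied to the identity component. Write \(\Aut_0(X)\) for the identity component of \(\Aut(X)\), a connected Lie group, and let
\[
	\rho\colon \Aut(X) \longrightarrow \GL\bigl(H^*(X,\mathbb{Z})/(\mathrm{tors})\bigr) \cong \GL(N,\mathbb{Z})
\]
be the natural representation on the free part of the total cohomology, where \(N\) is the total Betti number of \(X\). Set \(\Aut_\tau(X) := \ker\rho\), the subgroup of automorphisms acting trivially on rational cohomology; note \(\Aut_0(X) \subseteq \Aut_\tau(X)\) since the cohomology action is locally constant. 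The essential input from class \(\mathcal{C}\) is the Fujiki--Lieberman finiteness: \(\Aut_\tau(X)/\Aut_0(X)\) is a finite group, say of order \(m\). This is precisely where the hypothesis \(X \in \mathcal{C}\) enters, and it is the crux of the argument; once it is granted, the remaining steps are routine.

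Now fix an arbitrary torsion subgroup \(H \leq \Aut(X)\); I must produce an abelian subgroup of \(H\) whose index is bounded by a constant depending only on \(X\). First I would control the image \(\rho(H)\). As a torsion subgroup of \(\GL(N,\mathbb{C})\), it is locally finite by Schur's theorem; on the other hand, by Minkowski's theorem every finite subgroup of \(\GL(N,\mathbb{Z})\) has order at most some \(M(N)\) depending only on \(N\). A locally finite group in which all finite subgroups have order \(\leq M(N)\) is itself finite of order \(\leq M(N)\): take a finite subgroup \(F\) of maximal order and observe that for any element \(g\) the finite group \(\langle F, g\rangle\) contains \(F\), whence \(g \in F\). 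Thus \(\rho(H)\) is finite with \(\lvert\rho(H)\rvert \leq M(N)\), so the torsion subgroup \(H_\tau := H \cap \Aut_\tau(X)\) satisfies \([H:H_\tau] = \lvert\rho(H)\rvert \leq M(N)\).

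Next I would pass to the identity component. Since \([\Aut_\tau(X):\Aut_0(X)] = m\), the torsion subgroup \(H_0 := H_\tau \cap \Aut_0(X)\) has index \([H_\tau : H_0] \leq m\). As \(H_0\) is a torsion subgroup of the connected Lie group \(\Aut_0(X)\), \cref{thm:connected_lie_group_t_jordan} furnishes an abelian subgroup \(H_1 \leq H_0\) with \([H_0 : H_1] \leq J\bigl(\Aut_0(X)\bigr)\). Multiplying the three indices yields
\[
	[H : H_1] \leq M(N)\cdot m \cdot J\bigl(\Aut_0(X)\bigr),
\]
a bound independent of \(H\). Hence \(\Aut(X)\) is T-Jordan with \(J(\Aut(X)) = M(N)\cdot m \cdot J\bigl(\Aut_0(X)\bigr)\). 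The only genuinely nontrivial ingredient is the Fujiki--Lieberman finiteness of \(\Aut_\tau(X)/\Aut_0(X)\); the rest is a clean assembly of Minkowski's bound, Schur's local finiteness, and \cref{thm:connected_lie_group_t_jordan}.
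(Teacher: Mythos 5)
Your argument is correct, and it is exactly the route the paper itself gestures at: the theorem is quoted from \cite{jia2022equivariant}*{Corollary~1.5}, but the surrounding remark states explicitly that Fujiki's Theorem~4.8 and Lieberman's Proposition~2.2 suffice for the proof in the cases the paper actually uses. You correctly isolate the crux --- the finiteness of \(\Aut_\tau(X)/\Aut_0(X)\) for \(X\) in class \(\mathcal{C}\) --- and the assembly via Minkowski's bound on finite subgroups of \(\GL(N,\mathbb{Z})\), Schur's local finiteness of torsion linear groups, and Lee's theorem applied to the connected Lie group \(\Aut_0(X)\) is sound, yielding the explicit constant \(M(N)\cdot m\cdot J(\Aut_0(X))\). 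The one place where your write-up diverges from the cited proof is generality: the source \cite{jia2022equivariant} treats arbitrary (possibly singular) compact complex spaces in class \(\mathcal{C}\) by first passing to an equivariant K\"ahler model, i.e.\ a bimeromorphic holomorphic map from a compact K\"ahler manifold to which \(\Aut_\tau(X)\) lifts, and then invoking the K\"ahler case --- the same device this paper uses in its proof of \cref{thm:tits_alternative_class_c}. Your direct argument avoids that machinery, but you should then say a word about why it still applies to singular \(X\): \(\Aut(X)\) is a complex Lie group for any compact complex space (Kaup), \(H^*(X,\mathbb{Z})\) is finitely generated, and Fujiki's Theorem~4.8 is stated for class \(\mathcal{C}\) spaces, not only manifolds; alternatively, an \(\Aut(X)\)-equivariant resolution reduces you to the smooth case as in the paper's proof of \cref{thm:tits_alternative_class_c}. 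With that caveat addressed, your proof is a clean, self-contained alternative that trades the equivariant K\"ahler model for the classical finiteness statement and produces an explicit Jordan constant.
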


In this paper, we are focusing on smooth compact complex surfaces,
especially the non-K\"ahler (\(=\) non-Fujiki's class \(\mathcal{C}\)) surfaces.
We use the following notation.

\begin{itemize}[wide=0pt,leftmargin=*]
	\item Let \(\Xi\) be the set of smooth compact complex surface \(X\) in class \Romannum{7}
	      with the algebraic dimension \(a(X) = 0\) and the second Betti number \(b_2(X) > 0\).
	\item Let \(\Xi_0 \subseteq \Xi\) be those minimal surfaces which have no curve.
\end{itemize}
Here, surfaces of class \Romannum{7} are those smooth compact complex surfaces with
the first Betti number \(b_1 = 1\) and Kodaira dimension \(\kappa = -\infty\) (cf.~\cite{kodaira1964structure}*{\S~7}).
By a \emph{minimal surface} we mean a smooth compact complex surface that does not contain any \((-1)\)-curve \(C\)
(i.e., a smooth rational curve \(C\) with self-intersection \(C^2 = -1\)).
It is known that a smooth compact complex surface \(X\) is minimal if and only if
any bimeromorphic holomorphic map \(X\longrightarrow X'\) to a smooth compact complex surface \(X'\) is an isomorphism.

% Below is our first result.

\begin{proposition}\label{prop:main_t_jordan}
	Let \(X\) be a compact complex surface not in \(\Xi_0\).
	Then \(\Aut(X)\) is T-Jordan.
\end{proposition}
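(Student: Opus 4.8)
The plan is to combine the Enriques--Kodaira classification with the two inputs already available: \cref{thm:class_c} for surfaces in Fujiki's class \(\mathcal{C}\), and \cref{thm:connected_lie_group_t_jordan} applied to the identity component \(\Aut^0(X)\), which is a connected complex Lie group. First I would reduce to minimal surfaces: if \(X\) is not minimal it is a blow-up of its minimal model \(Y\), every automorphism preserves the canonically determined exceptional locus and hence descends, and since \(X\setminus\Exc\) is a dense open set isomorphic to the complement of finitely many points in \(Y\), the induced homomorphism \(\Aut(X)\to\Aut(Y)\) is injective. As any subgroup of a T-Jordan group is T-Jordan with the same constant, it suffices to treat minimal \(X\). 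Moreover K\"ahler surfaces lie in class \(\mathcal{C}\), and for surfaces class \(\mathcal{C}\) coincides with being K\"ahler (\(b_1\) even), so \cref{thm:class_c} disposes of that case and we may assume \(X\) is non-K\"ahler, i.e.\ \(b_1(X)\) is odd.

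The engine for the remaining cases is the action on cohomology. Let \(\rho\colon\Aut(X)\to\GL(H^2(X,\mathbb{Z}))\) and set \(\Aut_\tau(X)=\ker\rho\). The image \(\rho(\Aut(X))\) is a subgroup of \(\GL_{b_2(X)}(\mathbb{Z})\), so by Minkowski's lemma its torsion subgroups have order bounded in terms of \(b_2(X)\) alone. Thus for any torsion subgroup \(H\le\Aut(X)\), the subgroup \(H\cap\Aut_\tau(X)\) has index bounded by a constant \(M=M(b_2(X))\), and it is a torsion subgroup of \(\Aut_\tau(X)\); an abelian subgroup \(A\le H\cap\Aut_\tau(X)\) of index \(\le J(\Aut_\tau(X))\) then satisfies \([H:A]\le M\cdot J(\Aut_\tau(X))\), so it suffices to prove that \(\Aut_\tau(X)\) is T-Jordan. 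Since \(\Aut^0(X)\subseteq\Aut_\tau(X)\) (the connected group acts trivially on the discrete lattice \(H^2(X,\mathbb{Z})\)) and \(\Aut^0(X)\) is T-Jordan by \cref{thm:connected_lie_group_t_jordan}, the same extension argument reduces everything to showing that the discrete quotient \(\Aut_\tau(X)/\Aut^0(X)\) has uniformly bounded torsion; in fact I aim to prove it is finite.

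This finiteness I would establish by running through the non-K\"ahler minimal surfaces. For Kodaira surfaces (\(\kappa=0\)) the nilmanifold model exhibits \(\Aut(X)\) as an extension of a finite group by translations, so the quotient is finite outright. For non-K\"ahler properly elliptic surfaces (\(\kappa=1\)) --- and, identically, for class \Romannum{7} surfaces with \(a(X)=1\) via the algebraic reduction --- the canonical elliptic fibration \(f\colon X\to B\) is \(\Aut(X)\)-equivariant; the image in \(\Aut(B)\) preserves the hyperbolic orbifold structure on the base (forced by \(\kappa=1\)) and is therefore finite, while the fibre-preserving kernel acts on a smooth fibre through translations and a finite automorphism group, so \(\Aut_\tau(X)/\Aut^0(X)\) is finite. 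For class \Romannum{7} surfaces with \(b_2(X)=0\), the classification of Bogomolov, Li--Yau--Zheng and Teleman leaves only Hopf and Inoue surfaces, whose automorphism groups are explicit and evidently T-Jordan.

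The genuinely delicate case, which I expect to be the main obstacle, is that of minimal class \Romannum{7} surfaces with \(b_2(X)>0\) not in \(\Xi_0\). Having excluded class \(\mathcal{C}\) and \(\Xi_0\), such a surface has either \(a(X)=1\) (already handled via the fibration) or \(a(X)=0\) together with at least one curve. When \(a(X)=0\) the surface carries only finitely many irreducible curves, so a finite-index subgroup of \(\Aut(X)\) fixes each of them setwise; here I would invoke the structure theory of minimal class \Romannum{7} surfaces containing curves (global spherical shell / Kato-type surfaces, following Dloussky--Oeljeklaus--Toma and Teleman) to control \(\Aut^0(X)\) and to conclude that \(\Aut_\tau(X)/\Aut^0(X)\) remains finite. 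Collecting the cases yields the T-Jordan property for every \(X\notin\Xi_0\), the surfaces in \(\Xi_0\) being precisely those where this cohomological control is not currently available.
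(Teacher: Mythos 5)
Your overall skeleton (Enriques--Kodaira classification, reduction to minimal models, the K\"ahler case via \cref{thm:class_c}, a Minkowski-type bound on the $H^{2}$-action, Lee's theorem for the identity component) is close to the paper's, but there is a genuine gap at exactly the point you flag as delicate: minimal class \Romannum{7} surfaces with $b_{2}>0$, $a(X)=0$ and at least one curve. You propose to ``invoke the structure theory of minimal class \Romannum{7} surfaces containing curves (global spherical shell / Kato-type surfaces)''. The Dloussky--Oeljeklaus--Toma theorem identifies such a surface as a Kato surface only when it carries exactly $b_{2}$ rational curves; for a surface with at least one but fewer than $b_{2}$ curves this identification is precisely the open GSS conjecture, so no structure theory is available, and the paper is explicitly written so as not to assume it. Moreover, the statement you want to extract --- that $\Aut_{\tau}(X)/\Aut^{0}(X)$ is finite --- is stronger than what is true: already for Enoki surfaces (which lie in $\Xi\setminus\Xi_{0}$ and are covered by the proposition) and for non-K\"ahler properly elliptic surfaces, the fibre-preserving automorphisms can form an infinite discrete group not contained in $\Aut^{0}(X)$, and you give no argument for the weaker bounded-torsion property that your extension step actually requires. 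The same over-claim of ``evidently T-Jordan'' hides real work in the Hopf and Inoue cases: the paper needs the explicit computation of $\widetilde{\Aut}(X)$ in Sections~5 and~6, including a non-trivial boundedness argument for torsion in $\Gamma/\mathbb{Z}$ for Inoue surfaces, and for elliptic and Kodaira surfaces it needs \cref{prop:very_general_subset} to make the fibre-preserving group act faithfully on finitely many fibres.

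The idea missing from your proposal, and the one that carries the hard case, is \cref{lem:cpt_surface_with_inv_curves}. Since $a(X)=0$ forces $X$ to contain only finitely many irreducible curves, a finite-index subgroup of $\Aut(X)$ preserves one curve $C$. If $C$ is singular, a further finite-index subgroup fixes a singular point of $C$, and the faithfulness of the tangent representation on torsion subgroups (\cref{lem:fix_point_torsion_embedding}) embeds every torsion subgroup into $\GL_{2}(\mathbb{C})$, where \cref{thm:connected_lie_group_t_jordan} applies. If $C$ is smooth, the Meng--Perroni--Zhang normal-bundle construction embeds torsion subgroups into $\Aut(\mathbb{P}_{C}(\mathcal{N}_{C/X}\oplus\mathcal{O}))$, a projective manifold, and \cref{thm:class_c} finishes. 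No classification of class \Romannum{7} surfaces is used anywhere in this step. A smaller slip: your reduction to minimal models via ``the canonically determined exceptional locus'' fails for rational and ruled surfaces (the exceptional locus is not canonical there and automorphisms need not descend); this is harmless only because those surfaces are K\"ahler and already covered by \cref{thm:class_c}, but the reduction should be stated, as in the paper, only for non-K\"ahler (hence non-ruled) surfaces via $\Aut(X)\subseteq\Bim(X)\simeq\Aut(X')$.
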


Although we are not able to confirm \cref{conj:main_t_jordan} for surfaces in \(\Xi_0\),
we have a slightly weaker result on torsion subgroups of \(\Aut(X)\).
Recall that a group \(G\) is \emph{virtually \(\mathcal{P}\)},
if there exists a finite-index subgroup of \(G\) that has the property \(\mathcal{P}\).
We say that a group \(G\) is \emph{\(\mathcal{P}\)-by-\(\mathcal{Q}\)}
if \(G\) fits into an short exact sequence
\[
	1 \longrightarrow K \longrightarrow G \longrightarrow H \longrightarrow 1
\]
where \(K\) has the property \(\mathcal{P}\) and \(H\) has the property \(\mathcal{Q}\).

\begin{proposition}\label{prop:main_class_vii_no_curve}
	Let \(X\) be a smooth compact complex surface in \(\Xi_0\).
	Let \(G \leq \Aut(X)\) be a torsion subgroup.
	Then \(G\) is virtually abelian.
\end{proposition}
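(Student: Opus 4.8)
The plan is to study the action of \(\Aut(X)\) on \(H^2(X;\mathbb Z)\) and to reduce the statement to the subgroup acting trivially there. For a surface of class \Romannum{7} one has \(p_g=0\), hence \(b^+(X)=0\), so the intersection form on \(H^2(X;\mathbb Z)/(\text{torsion})\) is negative definite. Therefore the natural representation \(\rho\colon\Aut(X)\to\GL\bigl(H^2(X;\mathbb Z)/\text{tors}\bigr)\) preserves a negative definite lattice, so its image lies in a finite orthogonal group and is finite. For a torsion subgroup \(G\le\Aut(X)\) the image \(\rho(G)\) is then finite, so \(G_1:=G\cap\ker\rho\) has finite index in \(G\), and it suffices to prove that \(G_1\)—a torsion subgroup of \(\ker\rho\)—is virtually abelian.

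Next I would exploit the geometry coming from \(X\in\Xi_0\). Since \(X\) has no curve and \(a(X)=0\), I would first argue that \(X\) carries no nonzero holomorphic vector field: any such field vanishes somewhere because \(c_2(X)=\chi_{\mathrm{top}}(X)=b_2>0\), and a minimal class \Romannum{7} surface with \(b_2>0\) admitting a vector field must contain invariant curves, contradicting the absence of curves. Hence \(h^0(X,\Theta_X)=0\), so \(\Aut_0(X)=\{1\}\) and \(\Aut(X)\) is discrete; in any case \(\Aut_0(X)\) is a connected Lie group, so its torsion subgroups are virtually abelian by \cref{thm:connected_lie_group_t_jordan}. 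The crucial consequence of ``no curve'' is that for every \(g\in\Aut(X)\setminus\{1\}\) the fixed locus \(\Fix(g)\) is a compact analytic subset of a surface containing no curve, hence a finite set of points; and since every \(g\in\ker\rho\) acts trivially on \(H^\ast(X;\mathbb Q)\), its topological Lefschetz number equals \(\chi_{\mathrm{top}}(X)=b_2>0\), so \(\Fix(g)\neq\varnothing\).

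The endgame is a linearisation argument. Suppose the torsion group \(G_1\) has a common fixed point \(p\in X\). Differentiating at \(p\) gives a homomorphism \(G_1\to\GL(T_pX)\cong\GL_2(\mathbb C)\), \(g\mapsto\mathrm dg_p\); this is injective, because a finite-order automorphism fixing \(p\) with trivial derivative is linearisable near \(p\) and is therefore trivial by the identity theorem. Thus \(G_1\) embeds as a torsion subgroup of \(\GL_2(\mathbb C)\), and the Jordan--Schur theorem provides an abelian subgroup of index bounded by an absolute constant \(J(2)\); hence \(G_1\), and therefore \(G\), is virtually abelian. (If instead one shows directly that every torsion subgroup of \(\ker\rho\) is finite, the conclusion is immediate, a finite group being trivially virtually abelian.)

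The main obstacle is precisely the production of a common fixed point for the a priori infinite torsion group \(G_1\) of cohomologically trivial automorphisms: here neither a K\"ahler form nor a \(G_1\)-invariant metric is available for an averaging or centre-of-mass argument, and the fixed sets \(\Fix(g)\), though finite and nonempty, need not meet. I would attack this by using the finite fixed-point data more carefully—analysing the permutation action of \(G_1\) on \(\Fix(g_0)\) for a fixed nontrivial \(g_0\) and passing to pointwise stabilisers (which embed into \(\GL_2(\mathbb C)\) as above), and invoking the holomorphic Lefschetz formula: since \(\chi(\mathcal O_X)=1-q+p_g=0\) for class \Romannum{7}, one has \(\sum_{p\in\Fix(g)}\det\bigl(\mathrm{id}-\mathrm dg_p\bigr)^{-1}=0\) for \(g\in\ker\rho\), which constrains the possible local actions and should force either the finiteness of \(G_1\) or the existence of a common fixed locus.
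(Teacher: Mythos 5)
Your overall architecture matches the paper's: reduce to the subgroup acting trivially on cohomology (the paper cites \cite{jia2022equivariant}*{Lemma~5.2} where you use negative definiteness of the intersection form on a class \Romannum{7} surface; either works), observe that on a surface with no curves the fixed locus of any nontrivial automorphism is finite and, by a Lefschetz-number count, nonempty, then linearise at a common fixed point via \cref{lem:fix_point_torsion_embedding} and finish with \cref{thm:connected_lie_group_t_jordan}. However, the step you yourself flag as ``the main obstacle'' --- producing a point fixed by a finite-index subgroup of the a priori infinite torsion group $G_1$ --- is exactly the heart of the argument, and your proposal does not close it: the holomorphic Lefschetz computation you sketch is speculative and is not what the paper uses. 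Moreover, your fallback of ``analysing the permutation action of $G_1$ on $\Fix(g_0)$'' is not available as stated, since for $h\in G_1$ one has $h\cdot\Fix(g_0)=\Fix(hg_0h^{-1})$, so $G_1$ does not permute $\Fix(g_0)$ unless $g_0$ is first made central.

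That last observation is precisely how the paper closes the gap. Pick any $\id\neq g\in G$ and replace $G$ by the centraliser $G'$ of $\langle g\rangle$, which the paper asserts has finite index because $g$ has finite order; now $g$ is central, so $\Fix(g)$ --- a finite set of cardinality $b_2(X)$ by \cite{prokhorov2020automorphism}*{Lemma~5.4} --- is genuinely $G'$-invariant, and a further subgroup of index at most $b_2(X)!$ fixes one of its points. From there your endgame (faithful linearisation into $\GL_2(\mathbb{C})$ at the fixed point, then Jordan--Schur/Lee) coincides with the paper's. So the proposal is a correct skeleton with the pivotal idea missing: centralise a single nontrivial element first, and only then exploit the finiteness of its fixed locus to obtain a global fixed point for a finite-index subgroup.
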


% Since virtually abelian groups are virtually nilpotent,
Our main result below is a direct consequence of \cref{prop:main_t_jordan,prop:main_class_vii_no_curve}.

\begin{theorem}\label{thm:virtually_abelian}
	Let \(X\) be a smooth compact complex surface.
	Then any torsion subgroup \(G\leq \Aut(X)\) is virtually abelian.
\end{theorem}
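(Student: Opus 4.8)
The plan is to deduce the statement from \cref{prop:main_t_jordan,prop:main_class_vii_no_curve} by a dichotomy on the position of \(X\) relative to the distinguished family \(\Xi_0\). Every smooth compact complex surface either belongs to \(\Xi_0\) or does not, so the two cases below are mutually exclusive and exhaustive, and it suffices to handle each separately.

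First I would treat the case \(X \notin \Xi_0\). Here \cref{prop:main_t_jordan} tells us that \(\Aut(X)\) is T-Jordan, i.e.\ there is a constant \(J = J(\Aut(X))\) such that every torsion subgroup of \(\Aut(X)\) contains an abelian subgroup of index at most \(J\). Specialising this to the given torsion subgroup \(G \leq \Aut(X)\) produces an abelian subgroup \(G_1 \leq G\) with \([G : G_1] \leq J < \infty\); by definition this says that \(G\) is virtually abelian, which is the desired conclusion. Note that in this case we even obtain the stronger uniform bound on the index, a feature that virtual abelianness alone does not require.

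Next I would treat the remaining case \(X \in \Xi_0\). Here the conclusion is literally the content of \cref{prop:main_class_vii_no_curve}, so the argument is simply to cite it: any torsion subgroup \(G \leq \Aut(X)\) is virtually abelian. Combining the two cases gives the theorem for all smooth compact complex surfaces.

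Since both ingredients are already established, the proof of the theorem itself presents no obstacle---it is a formal case split. The genuine difficulty is concentrated in \cref{prop:main_class_vii_no_curve}: for surfaces in \(\Xi_0\) we are unable to produce a uniform index bound (which would upgrade the result to the T-Jordan property and thereby settle \cref{conj:main_t_jordan} for these surfaces), and must instead argue virtual abelianness one torsion subgroup at a time. That is where the analysis of the automorphism group of a minimal class \Romannum{7} surface with no curves does the real work.
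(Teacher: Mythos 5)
Your proof is correct and is exactly the paper's argument: the theorem is stated there as a direct consequence of \cref{prop:main_t_jordan,prop:main_class_vii_no_curve}, obtained by the same dichotomy on whether \(X\in\Xi_0\), with the T-Jordan property supplying virtual abelianness in the first case.
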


Note that by the global spherical shell (GSS) conjecture, \(\Xi_0 = \emptyset\)
(cf.~\cite{nakamura1990surfaces}*{Conjecture~1}).

\begin{remark}
	Assume the global spherical shell conjecture.
	Let \(X\) be a smooth compact complex surface.
	Then \(\Aut(X)\) is T-Jordan.
\end{remark}

\subsection{Tits Alternative}\label{sub:tits_alternative}

In group theory, the \emph{Tits alternative} is an outstanding theorem about the structure of linear groups
(cf.~\cite{tits1972free}).
Campana, Wang and Zhang studied the automorphism groups of compact K\"ahler manifolds
and proved a Tits type alternative for such groups (cf.~\cite{campana2013automorphism}*{Theorem~1.5}).
We generalise their celebrated result to compact complex spaces in Fujiki's class \(\mathcal{C}\)
and smooth compact complex surfaces.

\begin{theorem}\label{thm:tits_alternative_class_c}
	Let \(X\) be a compact complex space in Fujiki's class \(\mathcal{C}\).
	Then \(\Aut(X)\) satisfies the Tits alternative,
	that is, for any subgroup \(G\) of \(\Aut(X)\) either it contains a non-abelian free subgroup,
	or it is virtually solvable, i.e., admits a solvable subgroup of finite index.
\end{theorem}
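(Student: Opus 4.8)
The plan is to deduce the class-$\mathcal{C}$ statement from the Kähler case of Campana--Wang--Zhang \cite{campana2013automorphism}*{Theorem~1.5} by means of an equivariant Kähler model, exactly the tool underlying \cref{thm:class_c}. Since the Tits alternative is a condition to be verified one subgroup at a time, I fix an arbitrary subgroup \(G \leq \Aut(X)\) and aim to produce either a non-abelian free subgroup of \(G\) or a solvable subgroup of finite index. The entire argument is a two-step reduction, first to the smooth case and then to the Kähler case, after which the conclusion is immediate.

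First I would reduce to the situation where \(X\) is smooth. As \(X\) lies in Fujiki's class \(\mathcal{C}\), a functorial resolution of singularities is automatically \(\Aut(X)\)-equivariant, since such resolutions commute with isomorphisms; this produces a smooth compact complex manifold \(X'\) in class \(\mathcal{C}\), a bimeromorphic morphism \(X' \to X\), and an injective homomorphism \(\Aut(X) \hookrightarrow \Aut(X')\) compatible with it. Replacing \(X\) by \(X'\) and \(G\) by its image, I may thus assume \(X\) is a smooth compact complex manifold in class \(\mathcal{C}\).

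Next I would invoke the equivariant Kähler model of \cite{jia2022equivariant}: there is a compact Kähler manifold \(\hat{X}\) together with a bimeromorphic morphism \(\pi\colon \hat{X} \to X\) for which the \(G\)-action lifts. Since \(\pi\) restricts to an isomorphism over the complement of a proper analytic subset, every \(g \in G\) admits a unique lift \(\hat{g} \in \Aut(\hat{X})\), and \(g \mapsto \hat{g}\) is an injective homomorphism; indeed \(\hat{g} = \id\) forces \(g = \id\) on a dense open set, hence everywhere. Therefore \(G\) is isomorphic to a subgroup of \(\Aut(\hat{X})\) for the compact Kähler manifold \(\hat{X}\). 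Applying \cite{campana2013automorphism}*{Theorem~1.5} to this subgroup shows that it, and hence \(G\), either contains a non-abelian free subgroup or is virtually solvable. As \(G\) was an arbitrary subgroup of \(\Aut(X)\), this is precisely the Tits alternative for \(\Aut(X)\).

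The main obstacle is entirely contained in the equivariant Kähler model: the difficulty is to arrange the modification \(\hat{X} \to X\) so that the whole group \(G\)---not merely a single automorphism, nor only a finite-index subgroup---lifts to genuine \emph{biholomorphisms} of a Kähler manifold rather than to mere bimeromorphic self-maps, where the Kähler form must be preserved simultaneously by all of \(G\). Once such a model is available, faithfulness of the lift is automatic from the density of the locus on which \(\pi\) is an isomorphism, and everything else reduces to a direct citation of the Kähler case together with the elementary equivariance of functorial resolution.
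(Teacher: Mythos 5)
Your overall strategy (resolve equivariantly, pass to a Kähler model, quote Campana--Wang--Zhang) is the right family of ideas, but there is a genuine gap at the step you yourself flag as ``the main obstacle'': the equivariant Kähler model of \cite{jia2022equivariant}*{Theorem~1.1} does \emph{not} provide a compact Kähler modification \(\hat{X}\to X\) to which an arbitrary subgroup \(G\leq\Aut(X)\) (or all of \(\Aut(X)\)) lifts. That theorem only produces a bimeromorphic holomorphic map \(\sigma\colon\widetilde{X}\to X\) from a compact Kähler manifold that is equivariant under the subgroup
\(\Aut_{\tau}(X)=\{g\in\Aut(X)\mid g^{*}|_{H^{2}(X,\mathbb{Q})}=\id\}\);
the construction essentially requires the group to fix cohomology classes, and there is no reason an automorphism acting nontrivially on \(H^{2}\) (e.g.\ one of positive entropy on a non-K\"ahler Moishezon manifold) should preserve any fixed K\"ahler modification. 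So the sentence ``there is a compact K\"ahler manifold \(\hat{X}\) \dots for which the \(G\)-action lifts'' is an unproved (and in general unavailable) assertion, and the rest of your argument rests on it.

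The paper circumvents exactly this point by inserting one extra reduction. After the (correct) equivariant resolution step, it forms the exact sequence
\[
1\longrightarrow G_{\tau}\longrightarrow G\longrightarrow G|_{H^{2}(X,\mathbb{Q})}\longrightarrow 1,
\qquad G_{\tau}=G\cap\Aut_{\tau}(X).
\]
The quotient is a linear group, hence satisfies the Tits alternative by Tits' original theorem \cite{tits1972free}; the kernel \(G_{\tau}\) does lift to the equivariant K\"ahler model and is handled by \cite{campana2013automorphism}*{Theorem~1.5}; and \cref{lem:virtually_solvable_ses}~(4) shows the Tits alternative is closed under such extensions. Your proof becomes correct once you add this decomposition; as written, it applies the K\"ahler-model theorem beyond its hypotheses.
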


For the definition and properties of Enoki surfaces and Inoue-Hirzebruch surfaces,
see \cite{enoki1981surfaces} and \cite{inoue1977new}, respectively.

\begin{theorem}\label{thm:main_tits_alternative}
	Let \(X\) be a smooth compact complex surface.
	Assume that either \(X\notin \Xi\),
	or \(X \in \Xi\) but its minimal model is an Enoki or Inoue-Hirzebruch surface.
	Then \(\Aut(X)\) satisfies the Tits alternative.
\end{theorem}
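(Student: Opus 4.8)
The plan is to reduce the Tits alternative for an arbitrary smooth compact complex surface to cases controlled by the Kodaira–Enriques classification, relying on the already-established Fujiki-class-$\mathcal{C}$ result (\cref{thm:tits_alternative_class_c}) to dispose of the vast majority of surfaces. First I would observe that the Tits alternative is a property closed under passing to finite-index subgroups and to subgroups, and that it behaves well in short exact sequences: if $1\to K\to G\to Q\to 1$ with $K$ and $Q$ both satisfying the Tits alternative, then so does $G$ (using that a virtually solvable-by-virtually solvable group is virtually solvable, and that a non-abelian free subgroup in the quotient lifts). These elementary permanence properties let me chase the problem through fibrations and finite covers, so I would record them first as a lemma.

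Next I would run the classification dichotomy. If $X\notin\Xi$, then by the hypothesis $X$ is either Kähler (hence in Fujiki's class $\mathcal{C}$, so \cref{thm:tits_alternative_class_c} applies directly), or it is a non-Kähler surface outside class $\Romannum{7}$ with $a(X)=0$ and $b_2>0$. The non-Kähler surfaces with Kodaira dimension $\kappa\geq 0$ are elliptic or Kodaira surfaces and their blow-ups; for these there is a canonical elliptic fibration or Albanese-type map whose invariance under $\Aut(X)$ yields a homomorphism to $\Aut$ of a curve (or a linear action on a lattice) with solvable-by-(Tits-alternative) structure, and the generic fibre action is by translations on an elliptic curve, which is abelian. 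Surfaces of class $\Romannum{7}$ with $a(X)=1$ carry a unique algebraic-reduction elliptic fibration, giving the same kind of fibration argument. So the only genuinely new analysis is for $X\in\Xi$, i.e.\ class $\Romannum{7}$ with $a(X)=0$ and $b_2>0$, under the hypothesis that the minimal model is an Enoki or Inoue–Hirzebruch surface.

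For that last case I would pass to the minimal model $X_{\min}$: since the Tits alternative passes between $X$ and its blow-downs along the finite kernel/image of the natural maps on automorphism groups (any automorphism fixes the finitely many exceptional curves up to finite index, so $\Aut(X)$ is commensurable with a subgroup of $\Aut(X_{\min})$), it suffices to treat $X_{\min}$ itself. For Inoue–Hirzebruch surfaces the automorphism group is known to be very small — finite, or finite-by-cyclic — hence virtually solvable and trivially satisfying the alternative. For Enoki surfaces the key structural fact is that an Enoki surface carries a unique irreducible curve (a cycle of rational curves together with possibly an elliptic curve or an affine line), and $\Aut(X_{\min})$ preserves this curve and the induced configuration; analysing the resulting action shows $\Aut(X_{\min})$ is an extension of a finite or discrete group by $\mathbb{C}^{*}$ or $\mathbb{C}$ acting on a distinguished affine chart, so it is virtually solvable. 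In both subcases one concludes $\Aut(X)$ is virtually solvable and a fortiori satisfies the Tits alternative.

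The main obstacle I expect is the Enoki case: I must pin down $\Aut(X_{\min})$ precisely enough to conclude virtual solvability, which requires using the explicit construction of Enoki surfaces (as compactifications of affine line bundles over elliptic curves, or via their GSS/contracting-germ description) to show no non-abelian free subgroup can arise. The difficulty is that, unlike the Kähler case, there is no Hodge-theoretic linear representation with a separating kernel to feed into the classical Tits theorem, so the argument must be geometric and case-specific rather than drawn from \cref{thm:tits_alternative_class_c}. Once the structure of $\Aut$ on the two named families is established, the permanence lemma and classification reduction assemble the proof with no further surprises.
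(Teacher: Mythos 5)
Your overall skeleton---a permanence lemma for the Tits alternative under extensions (the paper's \cref{lem:virtually_solvable_ses}(4)), the Kähler case via \cref{thm:tits_alternative_class_c}, reduction to minimal models, equivariant elliptic fibrations for the surfaces with \(\kappa \geq 0\) or \(a(X)=1\), and the Enoki/Inoue--Hirzebruch analysis---matches the paper where it overlaps. But there is a genuine gap in your case division. You assert that after the Kähler and fibration cases ``the only genuinely new analysis is for \(X\in\Xi\).'' That is false: \(\Xi\) requires \(b_2(X)>0\), so the minimal class \Romannum{7} surfaces with \(b_2=0\) --- namely Hopf surfaces of algebraic dimension \(0\) and all three families of Inoue surfaces \(S_M\), \(S^{(+)}\), \(S^{(-)}\) --- satisfy \(X\notin\Xi\) and are squarely within the theorem's hypotheses, yet your proposal never treats them. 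None of your tools apply there: these surfaces have \(a(X)=0\) and \(\kappa=-\infty\), so there is no algebraic reduction or pluricanonical fibration; Inoue surfaces contain no curves at all, so no invariant-curve argument is available; and they are not in Fujiki's class \(\mathcal{C}\). These cases are in fact where the paper does most of its work: for Hopf surfaces one lifts to the universal cover \(\mathbb{C}^2\setminus\{0\}\) and identifies the lifted automorphism group with a subgroup of \(\GL_2(\mathbb{C})\times\mathbb{C}\) (or uses the Matumoto--Nakagawa classification in the secondary case), and for Inoue surfaces one computes the normaliser of the lattice inside \(\Aut(\mathbb{H}\times\mathbb{C})\) explicitly to show \(\Aut(X)\) is solvable (\cref{thm:tits_alternative_hopf}, \cref{thm:tits_altervative_derived_length_inoue_sm}, \cref{thm:tits_altervative_derived_length_inoue_spm}). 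Without these two analyses the proof does not close.

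Two smaller remarks. First, your reduction to the minimal model via ``automorphisms permute the exceptional curves'' is more complicated than needed: since the non-Kähler surfaces in question are never rational or ruled, the paper simply uses \(\Aut(X)\subseteq\Bim(X)\simeq\Bim(X')=\Aut(X')\) for the unique minimal model \(X'\). Second, your Enoki sketch is in the right spirit but the precise mechanism in the paper is that the complement of the (automorphism-invariant, up to finite index) cycle of rational curves is an affine \(\mathbb{A}^1\)-bundle over an elliptic curve, and the kernel of the induced action on the base injects into a finite product of \(\Aut(\mathbb{A}^1)\)'s by \cref{prop:very_general_subset}; this is what actually delivers virtual solvability.
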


\subsection{Virtual Derived Length}\label{sub:virtual_derived_length}

Once we have established the Tits type alternative for \(\Aut(X)\),
it is natural to study the properties of those virtually solvable subgroups.
% Recall that for a group \(G\), its \emph{\(n\)-th derived subgroup} is defined recursively by
% \[
% 	G^{(0)}\coloneqq G \quad \text{and} \quad G^{(n)}\coloneqq [G^{(n-1)}, G^{(n-1)}], \, n \in \mathbb{N}.
% \]
% By definition, the group \(G\) is \emph{solvable} if and only if \(G^{(n)} = {1}\) for some non-negative integer \(n\).
% The minimum of such \(n\) is the \emph{derived length} of \(G\) (when \(G\) is solvable), denoted by \(\ell(G)\).
%
% If \(G\) is virtually solvable, we define the \emph{virtual derived length} to be
% \[
% 	\ell_{\mathrm{vir}}(G) \coloneqq \min_{G'} \ell(G')
% \]
% where \(G'\) runs through all finite-index subgroups of \(G\).

We have the following result, which gives a uniform upper bound of the virtual derived length (cf.~\cref{def:derived_Length})
for virtually solvable subgroups of \(\Aut(X)\).

\begin{theorem}\label{thm:main_derived_length}
	Let \(X\) be a smooth compact complex surface.
	Assume that either \(X\notin \Xi\),
	or \(X \in \Xi\) but its minimal model is an Enoki or Inoue-Hirzebruch surface.
	Let \(G \leq \Aut(X)\) be a virtually solvable subgroup.
	Then the virtual derived length \(\ell_{\mathrm{vir}}(G) \leq 4\).
\end{theorem}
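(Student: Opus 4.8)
The plan is to prove Theorem~\ref{thm:main_derived_length} by reducing it to a case analysis based on the classification of compact complex surfaces, exactly matching the hypotheses that guarantee the Tits alternative in Theorem~\ref{thm:main_tits_alternative}. Since $G$ is virtually solvable, it admits a finite-index solvable subgroup, and $\ell_{\mathrm{vir}}(G)$ is (by the definition referenced in \cref{def:derived_Length}) the infimum of derived lengths of such finite-index solvable subgroups. So the task is to exhibit, for a suitable finite-index subgroup, a subnormal series with abelian quotients of length at most $4$. First I would pass to the minimal model of $X$, since blowing down $(-1)$-curves only changes $\Aut(X)$ up to a controlled modification (automorphisms permute the finite set of $(-1)$-curves, so one gets a finite-index subgroup acting on the minimal model), and the virtual derived length is insensitive to passing to finite-index subgroups. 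This localizes the problem to minimal surfaces.

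Next I would split according to the Enriques--Kodaira classification and the algebraic dimension $a(X)\in\{0,1,2\}$. For surfaces in Fujiki's class $\mathcal{C}$ (in particular all K\"ahler surfaces, hence $a(X)=2$ or the projective/K\"ahler cases), I would invoke the structural results underlying Theorem~\ref{thm:tits_alternative_class_c} and the work of Campana--Wang--Zhang: a virtually solvable subgroup of $\Aut(X)$ acts on cohomology and on the Albanese/Picard data, and one bounds the derived length by combining the action on $\NS(X)$ (whose image is virtually abelian of bounded rank since $\NS$ of a surface has small rank for the relevant cases) with the kernel, which lands in a connected algebraic/Lie group $\Aut_0(X)$ whose solvable subgroups have bounded derived length. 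The constant $4$ should emerge as an extension estimate: derived length is subadditive under extensions, $\ell(G)\le \ell(K)+\ell(G/K)$, so I would control each factor by a small constant (typically $\le 2$ each) and add. For surfaces of algebraic dimension $1$ (elliptic fibrations), the automorphism group preserves the fibration, giving a map to $\Aut$ of the base curve with kernel acting fiberwise; both pieces contribute boundedly.

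The genuinely non-K\"ahler cases in the hypothesis are the class~\Romannum{7} surfaces with minimal model an Enoki or Inoue--Hirzebruch surface. Here I would use the explicit description of $\Aut(X)$ for these surfaces. For Inoue--Hirzebruch (and more generally hyperbolic/parabolic) surfaces, $\Aut(X)$ is known to be small—often finite or virtually cyclic—so the derived length bound is immediate. For Enoki surfaces (the parabolic case admitting a nonzero holomorphic vector field or effective curve configuration), $\Aut_0(X)$ is a positive-dimensional but solvable complex Lie group of small derived length, and the component group acts on the finite configuration of curves, hence is virtually abelian; combining gives a short solvable series. In each of these cases I would extract the abelian quotients explicitly and verify the length does not exceed $4$.

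The main obstacle I anticipate is producing a \emph{uniform} bound of exactly $4$ across all cases rather than a case-dependent constant, and in particular handling the extension bookkeeping carefully: the naive subadditivity $\ell(G)\le\ell(K)+\ell(G/K)$ can overshoot if applied through a long filtration, so I would need to choose the finite-index subgroup cleverly (e.g.\ the kernel of the action on $\Hm^2(X,\mathbb{Z})$ modulo torsion, or on the finite set of curves) to keep the series short. Concretely, I expect the optimal filtration to have the shape (abelian)-by-(abelian)-by-(abelian)-by-(abelian), where the successive kernels are: the image in $\GL(\NS(X))$ restricted to a virtually abelian part, the action on the base of any fibration, the linear part of $\Aut_0$, and a central/unipotent remainder. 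Verifying that each quotient is genuinely abelian after passing to finite index—especially pinning down the derived length of the relevant solvable Lie groups such as the affine or Heisenberg-type groups that arise for Enoki surfaces—is where the real work lies, and is the step I would check most carefully against the sharp value $4$.
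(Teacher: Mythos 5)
Your overall architecture (pass to finite-index subgroups, split by the Enriques--Kodaira classification, bound derived length by subadditivity over a short filtration) matches the paper's, but there are genuine gaps, the most serious being a misreading of the hypothesis. The set \(\Xi\) consists only of class \Romannum{7} surfaces with \(a(X)=0\) \emph{and} \(b_2>0\); hence ``\(X\notin\Xi\)'' includes every class \Romannum{7} surface with \(b_2=0\), i.e.\ all Hopf surfaces and all Inoue surfaces of types \(S_M\), \(S^{(+)}\), \(S^{(-)}\). These are non-K\"ahler, generically of algebraic dimension \(0\) (so not covered by your elliptic-fibration paragraph), and are neither Enoki nor Inoue--Hirzebruch surfaces, yet your proposal asserts that ``the genuinely non-K\"ahler cases in the hypothesis are the class \Romannum{7} surfaces with minimal model an Enoki or Inoue--Hirzebruch surface'' and never treats them. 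The paper spends Sections 5 and 6 on exactly these: for Hopf surfaces one lifts to \(\mathbb{C}^2\setminus\{0\}\) and embeds \(\widetilde{\Aut}(X)\) into \(\GL_2(\mathbb{C})\times\mathbb{C}\) to get \(\ell_{\mathrm{vir}}\leq 2\); for Inoue surfaces one computes the normaliser of \(\pi_1(X)\) in \(\Aut(\mathbb{H}\times\mathbb{C})\) explicitly, and the type \(S^{(\pm)}\) case (involving the Heisenberg-type group \(H(r)\)) is precisely where the bound \(4\) is attained in the non-K\"ahler setting. Omitting these cases omits the source of the constant in the theorem.

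Two further points. First, your reduction to the minimal model is justified by the claim that automorphisms permute ``the finite set of \((-1)\)-curves''; this fails for rational surfaces, which may carry infinitely many \((-1)\)-curves and need not have \(\Aut(X)\) descending to a minimal model. The paper only performs this reduction in the non-K\"ahler case, where \(X\) is not rational or ruled and \(\Aut(X)\subseteq\Bim(X)\simeq\Aut(X_{\min})\); the K\"ahler/projective case is handled directly on \(X\) itself. Second, in the projective case the step you correctly flag as ``where the real work lies'' is left undone: the paper's bound \(4\) there rests on showing that a unipotent normal subgroup \(U\lhd G\) has \(\ell(U)\leq 2\), proved by extracting a normal \(\mathbb{G}_a\leq U\), passing to the quotient \(\mathbb{P}^1\)-fibration \(X\to Y=X/\mathbb{G}_a\), and combining this with the zero-entropy filtration \(\ell_{\mathrm{vir}}(G)\leq 1+\ell_{\mathrm{vir}}(N(G))\). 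Without that input, the naive subadditivity you invoke does not by itself yield \(4\).
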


\begin{remark}
	\hfill
	\begin{enumerate}[wide=0pt,leftmargin=*]
		\item Currently, we are not able to prove \cref{thm:main_tits_alternative,thm:main_derived_length} in full generality for \(X \in \Xi\).
		\item The GSS conjecture claims that any minimal surface in \(\Xi\) is a Kato surface
		      (cf.~\cite{nakamura1990surfaces}*{Conjecture~1} and \cite{dloussky2003class}*{Main Theorem}).
		\item Kato surfaces are divided into four classes:
		      Enoki surfaces (including parabolic Inoue surfaces), half Inoue surfaces, Inoue-Hirzebruch surfaces and intermediate surfaces
		      (cf.~\cite{teleman2019nonkahlerian}*{\S3.3.2}).
		\item Fix \(b > 0\).
		      The moduli space of framed Enoki surfaces with \(b_2 = b\) is an open subspace
		      of the moduli space of framed Kato surfaces with \(b_2 = b\)
		      (cf.~\cite{dloussky1999classification}*{Remark, pp.~54} and \cite{teleman2019nonkahlerian}*{\S3.3.4.1}).
		\item When \(X\) is a parabolic Inoue surface (which is in \(\Xi\)),
		      it has been proved that \(\Aut(X)\) is virtually abelian
		      (cf.~\cite{fujiki2009automorphisms}*{Theorem~1.1}).
		      In particular, \(\Aut(X)\) satisfies the Tits alternative and
		      any subgroup \(G\) of \(\Aut(X)\) is virtually solvable with virtual derived length \(\ell_{\mathrm{vir}}(G) \leq 1\).
	\end{enumerate}
\end{remark}

\begin{ack}
	The author would like to thank Professors De-Qi Zhang and Sheng Meng
	for many inspiring discussions and valuable suggestions to improve the paper,
	Professor Fabio Perroni for bring \cite{enoki1981surfaces} to his attention
	and the valuable discussion,
	and Doctor Shramov for improving \cref{prop:main_class_vii_no_curve} (and hence \cref{thm:virtually_abelian}).
	The authors would also like to thank the referee for the very careful reading and suggestions to improve the paper.
	The author is supported by a President's Scholarship of NUS.
\end{ack}

\section{Preliminaries}\label{sec:preliminaries}

In this section, we gather some general preliminary results.

\begin{definition}\label{def:derived_Length}
	Recall that for a group \(G\), its \emph{\(n\)-th derived subgroup} is defined recursively by
	\[
		G^{(0)}\coloneqq G \quad \text{and} \quad G^{(n)}\coloneqq [G^{(n-1)}, G^{(n-1)}], \, n \in \mathbb{N}.
	\]
	By definition, the group \(G\) is \emph{solvable} if and only if \(G^{(n)} = {1}\) for some non-negative integer \(n\).
	The minimum of such \(n\) is the \emph{derived length} of \(G\) (when \(G\) is solvable), denoted by \(\ell(G)\).

	If \(G\) is virtually solvable, we define the \emph{virtual derived length} to be
	\[
		\ell_{\mathrm{vir}}(G) \coloneqq \min_{G'} \ell(G')
	\]
	where \(G'\) runs through all finite-index subgroups of \(G\).
\end{definition}

A group \(G\) has \emph{bounded torsion subgroups} if
there is a constant \(T = T(G)\) such that any torsion subgroup \(G_1 \leq G\) has order \(\abs{G_1} \leq T\).

\begin{lemma}\label{lem:t_jordan_exact_sequence}
	Let
	\[
		1\longrightarrow N\longrightarrow G\longrightarrow H
	\]
	be an exact sequence of groups.
	Then the following assertions hold:
	\begin{enumerate}
		\item If \(N\) is T-Jordan and \(H\) has bounded torsion subgroups,
		      then \(G\) is T-Jordan.
		\item Assume further that the sequence is also right exact.
		      If \(N\) is a torsion group and \(G\) is T-Jordan, then \(H\) is T-Jordan.
	\end{enumerate}
\end{lemma}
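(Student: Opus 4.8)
The plan is to track the T-Jordan constants through the given sequence, exploiting the multiplicativity of subgroup indices. Write \(\pi\colon G \to H\) for the map in the sequence, so that \(N = \ker\pi\) is identified with a normal subgroup of \(G\). Throughout I would use two elementary facts: a homomorphic image of a torsion group is again torsion, and (crucially for the second part) the preimage under \(\pi\) of a torsion subgroup is torsion as soon as \(N = \ker\pi\) is torsion. The latter is the standard observation that an extension of a torsion group by a torsion group is torsion: if the image of \(g\) has order \(m\), then \(g^m \in N\) has finite order \(k\), whence \(g^{mk} = 1\).

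For (1), let \(T \leq G\) be an arbitrary torsion subgroup; I must produce an abelian subgroup of index bounded independently of \(T\). I would split \(T\) into the two pieces \(T \cap N\) and \(\pi(T)\). The intersection \(T \cap N\) is a torsion subgroup of \(N\), so the T-Jordan property of \(N\) yields an abelian subgroup \(A \leq T \cap N\) with \([T\cap N : A] \leq J(N)\). The image \(\pi(T) \leq H\) is torsion, so the bounded-torsion hypothesis on \(H\) gives \(\abs{\pi(T)} \leq T(H)\); since the first isomorphism theorem applied to \(\pi|_T\) gives \([T : T\cap N] = \abs{\pi(T)}\), I obtain \([T : T\cap N] \leq T(H)\). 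Multiplying indices along the tower \(A \leq T\cap N \leq T\) yields \([T:A] \leq J(N)\cdot T(H)\) with \(A\) abelian, so \(G\) is T-Jordan with constant \(J(G) = J(N)\cdot T(H)\).

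For (2), now assume the sequence is short exact (so \(\pi\) is surjective), that \(N\) is torsion, and that \(G\) is T-Jordan. Let \(S \leq H\) be a torsion subgroup. The key move is to pull \(S\) back: set \(\widetilde S \coloneqq \pi^{-1}(S)\), which fits into an exact sequence \(1 \to N \to \widetilde S \to S \to 1\) with \(\pi|_{\widetilde S}\) surjective onto \(S\). Since both \(N\) and \(S\) are torsion, \(\widetilde S\) is a torsion subgroup of \(G\) by the fact noted above, so the T-Jordan property of \(G\) gives an abelian subgroup \(B \leq \widetilde S\) with \([\widetilde S : B] \leq J(G)\). I then push back down: \(\pi(B) \leq S\) is abelian, and because \(\pi|_{\widetilde S}\) is surjective, the coset map \(gB \mapsto \pi(g)\pi(B)\) is a well-defined surjection \(\widetilde S/B \twoheadrightarrow S/\pi(B)\), so \([S : \pi(B)] \leq [\widetilde S : B] \leq J(G)\). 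Hence \(H\) is T-Jordan with \(J(H) = J(G)\).

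Both implications are essentially bookkeeping with indices, so I do not expect a deep obstacle; the two points that genuinely require care are where the argument could otherwise fail. The first is the verification that \(\widetilde S\) is torsion in (2), which is exactly where the hypothesis that \(N\) is a torsion group is consumed — without it \(\widetilde S\) need not be torsion and the T-Jordan property of \(G\) would not apply. The second is the index inequality \([S:\pi(B)] \leq [\widetilde S : B]\), which relies on the surjectivity of \(\pi|_{\widetilde S}\), hence on the right-exactness hypothesis explicitly assumed in (2) and deliberately absent from (1). I expect these two structural observations, rather than any computation, to be the crux.
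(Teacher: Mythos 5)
Your proposal is correct and follows essentially the same route as the paper: part (1) is the standard index-multiplication argument the paper dismisses as clear, and your part (2) is exactly the paper's proof — pull the torsion subgroup back (using that a torsion-by-torsion extension is torsion), apply the T-Jordan property of \(G\), and push the resulting abelian subgroup forward (your \(\pi(B)\) is the paper's \(G'_1/(G'_1\cap N)\)). No gaps.
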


\begin{proof}
	(1) is clear.
	% Let \(G'\leq G\) be a torsion subgroup.
	% Denote by \(N'\coloneqq G'\cap N\) and \(H'\coloneqq G'/N'\leq H\).
	% Since \(N\) is T-Jordan, there exists an abelian subgroup
	% \(N_1\) of \(N'\) such that \([N':N'_1]\leq J(N)\) for some constant \(J(N)\).
	% Note that \(H'\) is a torsion subgroup of \(H\).
	% Then \([G':N']=\abs{H'}\leq B(H)\) for some constant \(B(H)\).
	% Note that \(N'_1\leq G'\) is abelian, and \([G':N'_1]=[G':N']\cdot [N':N'_1]\leq J'\cdot B''\).
	% Thus, \(G\) is T-Jordan.
	(2)
	Let \(H'\leq H\) be a torsion subgroup, and \(G'\leq G\) be its preimage.
	Since \(N\) is a torsion group,
	\(G'\) is also a torsion group and hence there exists an abelian subgroup
	\(G'_1\leq G'\) such that \([G':G'_1]\leq J(G)\) for some constant \(J(G)\).
	Let \(H'_1 = G'_1/(G'_1 \cap N)\leq H'\), which is abelian with \([H':H'_1]\leq J(G)\).
	Therefore, \(H\) is T-Jordan.
\end{proof}

The following two lemmas are adapted from \cite{prokhorov2021automorphism}*{Corollaries~4.2, 4.4}.
Note that every element of a torsion group has finite order.

\begin{lemma}\label{lem:fix_point_torsion_embedding}
	Let \(X\) be an irreducible Hausdorff reduced complex space,
	and let \(\Gamma\leq \Aut(X)\) be a torsion subgroup.
	Suppose that \(\Gamma\) has a fixed point \(x\) on \(X\).
	Then the natural representation
	\[
		\Gamma\longrightarrow \GL(T_{x,X})
	\]
	is faithful.
\end{lemma}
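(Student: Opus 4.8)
The plan is to fix an arbitrary \(g \in \Gamma\) lying in the kernel of the representation \(\Gamma \to \GL(T_{x,X})\) and to prove \(g = \id_X\); since \(\Gamma\) is a torsion group, \(g\) has finite order, say \(g^n = \id\), and this finiteness will be the crucial input. I would work with the germ of \(X\) at the fixed point \(x\). Let \((\mathcal{O},\mathfrak{m})\) denote the analytic local ring \(\mathcal{O}_{X,x}\) with its maximal ideal, so that \(T_{x,X} = (\mathfrak{m}/\mathfrak{m}^2)^{\vee}\). The automorphism \(g\) induces a \(\mathbb{C}\)-algebra automorphism \(g^{*}\) of \(\mathcal{O}\) preserving \(\mathfrak{m}\), and the hypothesis \(d_x g = \id\) says precisely that \(g^{*}\) acts as the identity on \(\mathfrak{m}/\mathfrak{m}^2\).

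Next I would analyse the action of \(g^{*}\) on the graded pieces \(\mathfrak{m}^{j}/\mathfrak{m}^{j+1}\). Since these are spanned by products of \(j\) elements of \(\mathfrak{m}\) and \(g^{*}(a) \equiv a \pmod{\mathfrak{m}^2}\) for every \(a \in \mathfrak{m}\), a short computation shows that \(g^{*}\) is the identity on each \(\mathfrak{m}^{j}/\mathfrak{m}^{j+1}\); equivalently \((g^{*}-\id)\) raises the \(\mathfrak{m}\)-adic filtration, so \(g^{*}\) is unipotent on every finite-dimensional quotient \(\mathcal{O}/\mathfrak{m}^{k+1}\). Here the torsion hypothesis enters: \(g^{*}\) has finite order on \(\mathcal{O}/\mathfrak{m}^{k+1}\), and a unipotent operator of finite order over a field of characteristic zero is the identity, its minimal polynomial dividing \(\gcd\bigl((t-1)^{m}, t^{n}-1\bigr) = t-1\). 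Hence \(g^{*} = \id\) on \(\mathcal{O}/\mathfrak{m}^{k+1}\) for all \(k\), so \(g^{*}\) is the identity on the completion \(\widehat{\mathcal{O}}\). As \(\mathcal{O}\) is Noetherian, Krull's intersection theorem gives \(\bigcap_k \mathfrak{m}^k = 0\) and hence an injection \(\mathcal{O} \hookrightarrow \widehat{\mathcal{O}}\); therefore \(g^{*} = \id\) on \(\mathcal{O}\), i.e.\ \(g\) restricts to the identity on some neighbourhood of \(x\).

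Finally I would globalise. The maps \(g\) and \(\id_X\) are two holomorphic self-maps of \(X\) agreeing on a nonempty open set. Since \(X\) is irreducible and reduced, its regular locus \(X_{\mathrm{reg}}\) is a dense connected complex manifold, and the classical identity theorem on \(X_{\mathrm{reg}}\) (applied after a local embedding of the target) forces \(g = \id\) there, whence \(g = \id_X\) on all of \(X\) by density of \(X_{\mathrm{reg}}\) and the Hausdorff property. As \(g\) was an arbitrary kernel element, the representation is faithful. I expect the only genuinely delicate point to be the step in the second paragraph—that triviality on \(T_{x,X}\) together with finite order forces triviality on the full local ring—where the torsion assumption is indispensable (for instance \((x,y)\mapsto(x,y+x^{2})\) is unipotent at the origin but is neither the identity nor of finite order); the passage to the completion and the characteristic-zero unipotence lemma are exactly what make the argument go through.
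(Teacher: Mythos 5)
Your argument is correct. The paper does not prove this lemma itself but only cites \cite{prokhorov2021automorphism}*{Corollary~4.2}, and the argument underlying that reference is exactly the one you give: the pullback \(g^{*}\) acts unipotently on each \(\mathcal{O}_{X,x}/\mathfrak{m}^{k+1}\) because it is trivial on \(\mathfrak{m}/\mathfrak{m}^{2}\), a finite-order unipotent operator in characteristic zero is the identity, Krull's intersection theorem descends this to \(\mathcal{O}_{X,x}\), and irreducibility globalises it. So your proof matches the intended one in both substance and the role played by the torsion hypothesis.
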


\begin{lemma}\label{lem:torsion_quotient_embedding}
	Let \(X\) be an irreducible Hausdorff reduced complex space,
	and let \(\Delta\leq \Aut(X)\) be a subgroup.
	Suppose that \(\Delta\) has a fixed point \(x\) on \(X\),
	and let
	\[
		\sigma\colon \Delta\longrightarrow \GL(T_{x,X})
	\]
	be the natural representation.
	Suppose that there is a normal subgroup \(\Gamma\leq \Delta\) with the quotient group \(\Delta/\Gamma\) being torsion,
	such that the restriction \(\sigma|_{\Gamma}\) is a group monomorphism.
	Then \(\sigma\) is an embedding as well.
\end{lemma}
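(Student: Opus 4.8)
The plan is to analyze the kernel $K \coloneqq \ker \sigma \trianglelefteq \Delta$ directly and reduce to \cref{lem:fix_point_torsion_embedding}. The goal is to show $K = \{1\}$. First I would exploit the injectivity of $\sigma|_{\Gamma}$: since any element of $K \cap \Gamma$ lies in both the kernel of $\sigma$ and a subgroup on which $\sigma$ is injective, one gets
\[
	K \cap \Gamma = \{1\}.
\]

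Next I would transport $K$ into the torsion quotient. Because $\Gamma$ is normal in $\Delta$, the restriction of the quotient map $\Delta \to \Delta/\Gamma$ to $K$ has kernel $K \cap \Gamma = \{1\}$, so it induces an isomorphism of $K$ onto the subgroup $K\Gamma/\Gamma \leq \Delta/\Gamma$. As $\Delta/\Gamma$ is a torsion group, so is any subgroup of it; hence $K$ is itself a torsion subgroup of $\Aut(X)$. This is the one observation the whole argument hinges on, and I expect it to be the only genuinely non-formal point: everything else is bookkeeping with the normal subgroups, whereas recognising that $\ker\sigma$ must be torsion is exactly what lets us invoke the faithfulness statement.

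Finally I would apply \cref{lem:fix_point_torsion_embedding} to $K$. Indeed, $K$ is a torsion subgroup of $\Aut(X)$ which fixes the point $x$ (being contained in $\Delta$), so the natural representation $K \longrightarrow \GL(T_{x,X})$, namely $\sigma|_{K}$, is faithful. But by the very definition of $K$ as the kernel of $\sigma$, the map $\sigma|_{K}$ is trivial. A trivial faithful representation forces $K = \{1\}$, so $\sigma$ is injective and therefore an embedding.

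One routine detail I would verify in passing is that $K = \ker\sigma$ is a subgroup fixing $x$ to which \cref{lem:fix_point_torsion_embedding} genuinely applies, i.e.\ that the ambient hypotheses on $X$ (irreducible, Hausdorff, reduced) carry over verbatim from the statement of \cref{lem:torsion_quotient_embedding}; since these are assumptions shared by both lemmas there is nothing to check beyond noting that $K \leq \Delta \leq \Aut(X)$ inherits the fixed point $x$.
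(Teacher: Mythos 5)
Your argument is correct and is essentially identical to the paper's proof: both observe that $\ker\sigma\cap\Gamma=\{1\}$, deduce that $\ker\sigma$ embeds into the torsion quotient $\Delta/\Gamma$ and is therefore torsion, and then conclude it is trivial by \cref{lem:fix_point_torsion_embedding}. Your write-up just spells out the intermediate steps more explicitly.
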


\begin{proof}
	Let \(\Delta_0\) be the kernel of \(\sigma\).
	Since \(\Delta_0\cap \Gamma=\{\id\}\) and \(\Delta/\Gamma\) is torsion,
	we see that \(\Delta_0\) is also torsion.
	Thus, \(\Delta_0\) is trivial by~\cref{lem:fix_point_torsion_embedding}.
\end{proof}

\begin{lemma}\label{lem:derived_length_linear_gp}
	Let \(G\leq \GL_n(\mathbb{C})\) be a virtually solvable subgroup.
	Then \(\ell_{\mathrm{vir}}(G)\leq n\)
\end{lemma}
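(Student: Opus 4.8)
The plan is to reduce, through a finite-index subgroup, to the case of a connected solvable linear algebraic group, apply the Lie--Kolchin triangularization theorem, and then bound the derived length of the full upper-triangular group by \(n\). Since \(\ell_{\mathrm{vir}}(G) \leq \ell(G')\) for any finite-index subgroup \(G' \leq G\), it suffices to exhibit a single finite-index subgroup of \(G\) whose derived length is at most \(n\).

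First I would pass to a convenient finite-index subgroup. As \(G\) is virtually solvable, choose a solvable subgroup \(G_0 \leq G\) of finite index, and let \(\overline{G_0} \leq \GL_n(\mathbb{C})\) be its Zariski closure. The Zariski closure of a solvable group is solvable, so \(\overline{G_0}\) is a solvable linear algebraic group; its identity component \((\overline{G_0})^{\circ}\) is a connected solvable algebraic subgroup of finite index in \(\overline{G_0}\). Setting \(G' \coloneqq G_0 \cap (\overline{G_0})^{\circ}\), we get \([G_0 : G'] \leq [\overline{G_0} : (\overline{G_0})^{\circ}] < \infty\), so \(G'\) has finite index in \(G\). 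By the Lie--Kolchin theorem, over the algebraically closed field \(\mathbb{C}\) the connected solvable group \((\overline{G_0})^{\circ}\) is conjugate in \(\GL_n(\mathbb{C})\) to a subgroup of the group \(\mathrm{B}\) of invertible upper-triangular \(n\times n\) matrices; since conjugation preserves derived length, after conjugating we may assume \(G' \leq \mathrm{B}\).

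It then remains to check \(\ell(\mathrm{B}) \leq n\). Let \(\mathrm{U} \leq \mathrm{B}\) be the normal subgroup of unitriangular matrices, and for \(1 \leq k \leq n\) let \(\mathrm{U}^{[k]}\) consist of those \(I + N \in \mathrm{U}\) with \(N_{ij} = 0\) whenever \(j - i < k\), so that \(\mathrm{U} = \mathrm{U}^{[1]}\) and \(\mathrm{U}^{[n]} = \{I\}\). Since the diagonal quotient \(\mathrm{B}/\mathrm{U} \cong (\mathbb{C}^{\times})^{n}\) is abelian, one has \([\mathrm{B},\mathrm{B}] \leq \mathrm{U}\), and a direct commutator computation gives \([\mathrm{U}^{[i]}, \mathrm{U}^{[j]}] \leq \mathrm{U}^{[i+j]}\). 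Iterating the latter along the derived series yields \(\mathrm{U}^{(k)} \leq \mathrm{U}^{[2^{k}]}\), which is trivial as soon as \(2^{k} \geq n\); hence \(\ell(\mathrm{U}) \leq \lceil \log_2 n \rceil\) and \(\ell(\mathrm{B}) \leq 1 + \lceil \log_2 n \rceil \leq n\). Combining, \(\ell_{\mathrm{vir}}(G) \leq \ell(G') \leq \ell(\mathrm{B}) \leq n\).

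The only genuine subtlety is the passage to the Zariski-connected component before invoking Lie--Kolchin: triangularizability fails for disconnected solvable linear groups (for instance the quaternion group sits in \(\GL_2(\mathbb{C})\) but is not triangularizable), so one must first replace \(G_0\) by \(G'\) and verify that \((\overline{G_0})^{\circ}\) has finite index in \(\overline{G_0}\). Once connectedness is secured, the commutator filtration bounding \(\ell(\mathrm{B})\) is routine and in fact gives the sharper logarithmic estimate, of which \(n\) is a (more than) sufficient consequence.
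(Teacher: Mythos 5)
Your proposal is correct and follows essentially the same route as the paper: pass to a finite-index solvable subgroup, triangularize a further finite-index subgroup via Lie--Kolchin (the paper cites the corollary in Borel's book, whose proof is exactly your Zariski-closure/identity-component argument), and bound the derived length of the upper-triangular group by \(n\). You merely unpack the citation and supply the commutator-filtration computation that the paper leaves implicit.
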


\begin{proof}
	By passing to a finite-index subgroup,
	we may assume that \(G\) itself is solvable.
	Then some subgroup \(G_1\) of finite index in \(G\) can be put in triangular form
	(cf.~\cite{borel2012linear}*{Corollary, pp.~137}).
	It follows that the derived length of \(G_1\) and hence
	the virtual derived length of \(G\) is at most \(n\).
	% Let \(\overline{G}\) be the Zariski closure of \(G\) inside \(\GL_n(\mathbb{C})\).
	% Then \(\overline{G}\) is a virtually solvable algebraic group.
	% % GGT 14.73
	% In particular, it has finitely many connected components.
	% Denote by \(\overline{G}_0\) the connected component of \(\overline{G}\) containing the identity.
	% Then \(\overline{G}_0\) is a connected solvable linear group, see, e.g., \cite{oguiso2006tits}*{Proof of Lemma~2.1(2)},
	% and hence is triangular by the Lie-Kolchin theorem.
	% Further, the derived length of \(\overline{G}_0\) is at most \(n\).
	%
	% Note that \(G' = (\overline{G}_0 \cap G) \leq G\) is solvable and of finite index in \(G\).
	% As a subgroup of \(\overline{G}_0\), the derived length of \(G'\) satisfies \(\ell(G') \leq n\).
	% Therefore, the virtual derived length \(\ell_{\mathrm{vir}}(G) \leq n\).
\end{proof}

\begin{lemma}\label{lem:virtually_solvable_ses}
	Consider the short exact sequence of groups
	\[
		1\longrightarrow N\longrightarrow G\longrightarrow H\longrightarrow 1.
	\]
	Then
	\begin{enumerate}[wide=0pt,leftmargin=*]
		\item if \(N\) is solvable and \(H\) is virtually solvable,
		      then \(G\) is virtually solvable with
		      \(\ell_{\mathrm{vir}}(G) \leq \ell(N) + \ell_{\mathrm{vir}}(H)\).
		\item if \(N\) is finite and \(H\) is virtually solvable,
		      then \(G\) is virtually solvable with \(\ell_{\mathrm{vir}}(G) \leq \ell_{\mathrm{vir}}(H) + 1\);
		\item the group \(G\) is virtually solvable if and only if both \(N\) and \(H\) are virtually solvable;
		\item if both \(N\) and \(H\) satisfy the Tits alternative, then so does \(G\).
	\end{enumerate}
\end{lemma}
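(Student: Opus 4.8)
The plan is to prove the four parts in sequence, using each to bootstrap the next. For (1), I would pick a finite-index solvable subgroup \(H_1\leq H\) with \(\ell(H_1)=\ell_{\mathrm{vir}}(H)\) and let \(G_1\leq G\) be its preimage, so that \([G:G_1]=[H:H_1]<\infty\) and the surjection \(G_1\to H_1\) has kernel \(N\). Since derived subgroups are functorial, \(G_1^{(\ell(H_1))}\) maps onto \(H_1^{(\ell(H_1))}=\{1\}\), hence \(G_1^{(\ell(H_1))}\leq N\) and therefore \(G_1^{(\ell(N)+\ell(H_1))}\leq N^{(\ell(N))}=\{1\}\). Thus \(G_1\) is solvable with \(\ell(G_1)\leq \ell(N)+\ell_{\mathrm{vir}}(H)\), which gives the asserted bound on \(\ell_{\mathrm{vir}}(G)\).

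For (2), I would reduce to (1) via the conjugation action. As \(N\) is finite, \(\Aut(N)\) is finite, so the centralizer \(C_G(N)=\ker\bigl(G\to \Aut(N)\bigr)\) has finite index in \(G\). Here \(C_G(N)\cap N=Z(N)\) is abelian, and the restriction of \(G\to H\) identifies \(C_G(N)/Z(N)\) with a subgroup of \(H\), which is therefore virtually solvable with virtual derived length at most \(\ell_{\mathrm{vir}}(H)\). Applying (1) to \(1\to Z(N)\to C_G(N)\to C_G(N)/Z(N)\to 1\), with \(\ell(Z(N))\leq 1\), yields \(\ell_{\mathrm{vir}}(C_G(N))\leq \ell_{\mathrm{vir}}(H)+1\); since \(C_G(N)\) has finite index in \(G\), the same bound holds for \(\ell_{\mathrm{vir}}(G)\).

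Part (3) is where the main obstacle lies. The forward implication is routine: for a finite-index solvable \(G_1\leq G\), the subgroup \(N\cap G_1\) is finite-index in \(N\) and solvable, while the image of \(G_1\) in \(H\) is finite-index and solvable, so both \(N\) and \(H\) are virtually solvable. For the converse the difficulty is that a finite-index solvable subgroup of \(N\) need not be normal in \(G\); the key step is to produce one that is. I would show that the solvable radical \(R\) of \(N\), namely the largest normal solvable subgroup, exists, is solvable, and has finite index in \(N\): beginning from a finite-index normal solvable subgroup \(S\trianglelefteq N\) (the normal core of a finite-index solvable subgroup), every normal solvable subgroup of \(N\) is contained in the preimage of the solvable radical of the finite group \(N/S\), and this preimage is the desired \(R\). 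Being the solvable radical, \(R\) is characteristic in \(N\), hence normal in \(G\). Now \(N/R\) is finite and \(H\) is virtually solvable, so \(G/R\) is virtually solvable by (2); as \(R\) is solvable, \(G\) is virtually solvable by (1).

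Finally, for (4) let \(\Gamma\leq G\) be arbitrary and consider \(1\to \Gamma\cap N\to \Gamma\to \Gamma N/N\to 1\). Since \(\Gamma\cap N\leq N\) and \(\Gamma N/N\leq H\), the Tits alternative for \(N\) and for \(H\) applies to each factor. If \(\Gamma\cap N\) contains a non-abelian free subgroup, then so does \(\Gamma\); if \(\Gamma N/N\) does, then because any surjection onto a free group splits, lifting a set of free generators produces a non-abelian free subgroup of \(\Gamma\). In the remaining case both \(\Gamma\cap N\) and \(\Gamma N/N\) are virtually solvable, so \(\Gamma\) is virtually solvable by (3). Hence \(G\) satisfies the Tits alternative.
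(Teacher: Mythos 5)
Your proposal is correct and follows essentially the same route as the paper: reduce (1) to the case of solvable $H$ via preimages, prove (2) through the finite-index centraliser of $N$, prove (3) by producing a finite-index solvable subgroup of $N$ that is normal in $G$ and then combining (2) and (1), and deduce (4) from (3) together with lifting free subgroups. The only cosmetic difference is in (3), where you take the solvable radical of $N$ (characteristic, hence normal in $G$) while the paper takes the normal solvable subgroup of minimal index and shows it is unique; both yield the same conclusion.
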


\begin{proof}
	(1)
	Let \(H'\leq H\) be a solvable subgroup of finite index (with \(\ell(H') = \ell_{\mathrm{vir}}(H)\)),
	and let \(G'\leq G\) be its preimage.
	Since \([G:G']=[G/N:G'/N]=[H:H'] < \infty\), we may replace \(G,H\) by \(G',H'\)
	and assume that \(H\) is solvable.
	Then it is clear that \(G\) is solvable,
	and \(\ell_{\mathrm{vir}}(G)\leq \ell(G)\leq \ell(N) + \ell(H) \leq \ell(N) + \ell_{\mathrm{vir}}(H)\).

	(2)
	As in (1), we may assume that \(H\) is solvable.
	Let \(Z_G(N)\) be the centraliser of \(N\) in \(G\).
	Since \(N\) is finite, \(Z_G(N) \leq G\) is of finite index.
	Note that \(Z_G(N) \cap N\) is abelian (in particular, solvable) and \(Z_G(N)/(Z_G(N) \cap N) \leq G/N\simeq H\) is solvable.
	Hence, \(Z_G(N)\) is also solvable with derived length \(\ell(Z_G(N)) \leq \ell(H) + 1\).
	It follows that \(\ell_{\mathrm{vir}}(G) \leq \ell_{\mathrm{vir}}(H) + 1\).

	(3)
	The ``only if'' direction is clear.
	Suppose that \(N\) and \(H\) are virtually solvable.
	As in (2), we may assume that \(H\) is solvable.
	Let \(N'\lhd N\) be a finite-index solvable (normal) subgroup of minimal index.
	% existence
	Then \(N'\) is normal in \(G\);
	otherwise, if \(N''\) is another conjugate of \(N'\), then \(N'N''\lhd N\) is solvable with smaller index,
	a contradiction to the choice of \(N'\).
	Consider the following short exact sequence
	\[
		1\longrightarrow N/N' \longrightarrow G/N' \longrightarrow G/N\simeq H\longrightarrow 1.
	\]
	By assumption, the first term is finite and \(H\) is solvable.
	Thus, \(G/N'\) is virtually solvable by (2).
	Then using the short exact sequence
	\[
		1\longrightarrow N' \longrightarrow G\longrightarrow G/N' \longrightarrow 1,
	\]
	we obtain that \(G\) is virtually solvable by (1).

	(4)
	This follows from (3).
	Note that if \(H\) contains a copy of \(\mathbb{Z}*\mathbb{Z}\), a non-abelian, free group with two generators,
	then \(G\) contains a(nother) copy of  \(\mathbb{Z}*\mathbb{Z}\) as well.
\end{proof}

Using the lemma above, we can give a proof of \cref{thm:tits_alternative_class_c}.

\begin{proof}[Proof of \cref{thm:tits_alternative_class_c}]
	Let \(\pi\colon X' \longrightarrow X\) be an \(\Aut(X)\)-equivariant resolution of singularities
	(cf.~\cite{bierstone1997canonical}*{Theorem~13.2(2)(ii)})
	with \(\Aut(X)\) lifts to a (unique) subgroup of \(\Aut(X')\) via \(\pi\).
	Note that \(X'\) is still in Fujiki's class \(\mathcal{C}\).
	Therefore, we may replace \(X\) by \(X'\) and assume that \(X\) is smooth.

	Denote by
	\[
		\Aut_{\tau}(X)\coloneqq \{g\in \Aut(X)\mid g^{*}|_{H^{2}(X,\mathbb{Q})} = \id\}.
	\]
	Then we have the following short exact sequence
	\[
		1\longrightarrow G_{\tau}\longrightarrow G\longrightarrow G|_{H^{2}(X,\mathbb{Q})}\longrightarrow 1
	\]
	where \(G_{\tau}=G\cap \Aut_{\tau}(X)\).
	It suffices to prove that \(G_{\tau}\) satisfies the Tits alternative
	by \cite{tits1972free}*{Theorem~1} and \cref{lem:virtually_solvable_ses}~(4).
	Let \(\sigma\colon \widetilde{X}\longrightarrow X\) be a bimeromorphic holomorphic map from a compact
	K\"ahler manifold \(\widetilde{X}\) such that \(\Aut_{\tau}(X)\) lifts to \(\widetilde{X}\)
	holomorphically via \(\sigma\) (cf.~\cite{jia2022equivariant}*{Theorem~1.1}).
	Viewing \(G_{\tau}\leq \Aut_{\tau}(X)\) as a subgroup of \(\Aut(\widetilde{X})\),
	we know that \(G_{\tau}\) satisfies the Tits alternative (cf.~\cite{campana2013automorphism}*{Theorem~1.5}).
	Therefore, \(G\) satisfies the Tits alternative.
\end{proof}

The theorem below is due to \cite{pinkham1984automorphisms}*{\S2, Theorem}.

\begin{theorem}\label{thm:inoue_hirzebruch}
	Let \(X\) be an Inoue-Hirzebruch surface.
	Then \(\Aut(X)\) is finite.
	In particular, \(\Aut(X)\) satisfies the Tits alternative
	and every subgroup \(G \leq \Aut(X)\) is finite and hence virtually solvable with
	virtual derived length \(\ell_{\mathrm{vir}}(G) = 0\).
\end{theorem}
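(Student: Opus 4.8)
The two ``in particular'' clauses are purely formal consequences of the finiteness of \(\Aut(X)\), so the whole task is to show \(\abs{\Aut(X)} < \infty\). Indeed, once \(\Aut(X)\) is finite, every subgroup \(G\) is finite; a finite group cannot contain a copy of \(\mathbb{Z}*\mathbb{Z}\) (which is infinite) and is vacuously virtually solvable, the trivial subgroup \(\{\id\}\) being of finite index with derived length \(0\), whence \(\ell_{\mathrm{vir}}(G) = 0\). I would state these reductions first and then concentrate on finiteness.

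To prove finiteness, the plan is to exploit the canonical invariant curve configuration. Since \(X\) is a minimal class~\Romannum{7} surface with \(a(X) = 0\), it carries only finitely many curves, and for an Inoue-Hirzebruch surface their union is the maximal divisor \(D\), whose irreducible components are rational curves arranged into cycles (the dual cusp cycles). As \(D\) is intrinsically attached to \(X\), every \(g\in\Aut(X)\) permutes the components of \(D\), giving a homomorphism \(\Aut(X)\to\Sym(\text{components of }D)\) with finite image. Equivalently, because the components \([C_i]\) span \(H^2(X,\mathbb{Q})\) and the intersection form on a minimal class~\Romannum{7} surface is negative definite, the induced action \(\Aut(X)\to\GL(H^2(X,\mathbb{Z}))\) lands in the isometry group of a definite lattice, which is finite. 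Either way, the quotient of \(\Aut(X)\) by the kernel \(K\) that fixes each component of \(D\) is finite, so it remains to bound \(K\).

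For the kernel, any \(g\in K\) fixes each \(C_i\cong\mathbb{P}^1\) and hence fixes the nodes \(p\) of \(D\); linearising at such a node gives a representation into \(\GL(T_pX)\) that is diagonal in the two branch directions. Restricting \(g\) to a curve \(C_i\), an automorphism of \(\mathbb{P}^1\) fixing its two nodes, produces a single multiplier \(\mu_i\in\mathbb{C}^*\). The point is that a holonomy relation around each cycle, determined by the self-intersection numbers \(C_i^2\) (equivalently by the hyperbolic matrix in \(\GL_2(\mathbb{Z})\) defining the cusps), pins down the product of the \(\mu_i\) and forces each \(\mu_i\) to be a root of unity of bounded order. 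Thus \(K\) is a bounded torsion group with a common fixed point \(p\), and \cref{lem:fix_point_torsion_embedding} shows the linearisation \(K\to\GL(T_pX)\) is faithful; hence \(\abs{K}<\infty\). Combining this with the finite quotient yields \(\abs{\Aut(X)}<\infty\).

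The main obstacle is exactly this last step: ruling out an infinite (a priori even positive-dimensional) family of automorphisms fixing the configuration. This is precisely where the special geometry of Inoue-Hirzebruch surfaces, as opposed to Enoki surfaces, must be used: it is the non-quasihomogeneity of the cusp cycles that prevents a \(\mathbb{C}^*\)-action and forces \(H^0(X,T_X)=0\) (so that \(\Aut^0(X)=\{\id\}\)), while the multiplier relations above control the remaining discrete part. This analysis is carried out in \cite{pinkham1984automorphisms}*{\S2, Theorem}, which I would invoke for the finiteness and then read off the stated consequences.
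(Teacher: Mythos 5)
Your proposal is correct and follows essentially the same route as the paper: the paper simply attributes the finiteness of \(\Aut(X)\) to \cite{pinkham1984automorphisms}*{\S2, Theorem} and treats the Tits alternative and \(\ell_{\mathrm{vir}}(G)=0\) as immediate formal consequences, exactly as you do. Your additional sketch of the cusp-cycle geometry behind Pinkham's result is a reasonable gloss but is not part of the paper's argument, which rests entirely on the citation.
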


The following lemmas will be used in \cref{sec:compact_kahler_surfaces}.

\begin{lemma}[cf.~\cite{hall1950topology}*{\S2, 4}]\label{lem:finite_index_normal}
	Let \(G\) be a finitely generated group.
	Then every finite-index subgroup of \(G\) contains a finite-index subgroup \(G'\)
	which is characteristic in \(G\).
\end{lemma}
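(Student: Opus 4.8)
The plan is to reduce the lemma to the fundamental finiteness property of finitely generated groups, namely that for each positive integer \(m\) the group \(G\) has only finitely many subgroups of index \(m\). Granting this, given a finite-index subgroup \(H \le G\) of index \(n\), I would simply set
\[
	G' \coloneqq \bigcap_{\substack{K \le G \\ [G:K] = n}} K,
\]
the intersection of \emph{all} subgroups of index exactly \(n\). By the finiteness property this is a finite intersection of finite-index subgroups, so \(G'\) has finite index in \(G\) (e.g.\ \([G:G'] \le n^{N}\), where \(N\) is the number of index-\(n\) subgroups, using \([G:A\cap B]\le [G:A][G:B]\) inductively). Clearly \(G' \subseteq H\), since \(H\) itself occurs among the subgroups being intersected.

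The key step, and the only place where finite generation is used, is the finiteness claim; this is the part to get right. I would argue as follows. Fix a finite generating set \(g_1, \dots, g_k\) of \(G\). A subgroup \(K\) of index \(m\) determines the left-multiplication action of \(G\) on the coset space \(G/K\); after choosing a bijection \(G/K \cong \{1, \dots, m\}\) that sends the trivial coset to \(1\), this yields a homomorphism \(\rho_K \colon G \to S_m\) to the symmetric group on \(m\) letters, and \(K\) is recovered as the preimage \(\rho_K^{-1}(\mathrm{Stab}(1))\). Since any homomorphism \(G \to S_m\) is completely determined by the images of \(g_1, \dots, g_k\), there are at most \((m!)^{k}\) of them, and hence only finitely many subgroups of index \(m\). (Finite generation is essential here: an infinitely generated group may well have infinitely many subgroups of a fixed finite index, so this is precisely the hypothesis that makes the argument run.)

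It remains to check that \(G'\) is characteristic, which is now formal. Any automorphism \(\phi \in \Aut(G)\) carries a subgroup of index \(n\) to another subgroup of index \(n\), and \(\phi^{-1}\) does likewise, so \(\phi\) restricts to a permutation of the finite set of index-\(n\) subgroups. As \(\phi\) is a bijection of \(G\), it commutes with intersections, whence
\[
	\phi(G') = \bigcap_{[G:K]=n} \phi(K) = \bigcap_{[G:K']=n} K' = G',
\]
the collection of index-\(n\) subgroups being preserved setwise. Thus \(G'\) is a characteristic, finite-index subgroup contained in \(H\), as required. The main obstacle is entirely concentrated in the finiteness step above; once that is established, the construction and its verification are routine.
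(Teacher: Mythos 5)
Your proof is correct and is precisely the standard argument underlying the reference the paper cites for this lemma (the paper itself gives no proof, only the pointer to Hall): finitely many subgroups of a given finite index via homomorphisms to \(S_m\) determined by a finite generating set, followed by intersecting all subgroups of that index to obtain a characteristic finite-index subgroup. No gaps; the verification that the intersection is characteristic and contained in \(H\) is exactly as you say.
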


\begin{lemma}\label{lem:one_dim_normal_subgroup}
	Let \(G\) be a solvable group and \(U \lhd G\) a non-trivial unipotent (normal) subgroup.
	Then, after replacing \(G\) by a finite-index subgroup,
	there is a subgroup \(Z_1 \simeq \mathbb{G}_a \leq U\) which is normal in \(G\).
\end{lemma}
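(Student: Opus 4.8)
The plan is to extract a one‑dimensional \(G\)-invariant subgroup from a canonical abelian piece of \(U\), using that a unipotent group over \(\mathbb{C}\) is nilpotent with vector‑group‑like structure, and that \(G\) is solvable. Throughout I work in the category of linear algebraic groups over \(\mathbb{C}\), so that ``unipotent'' and \(\mathbb{G}_a\) have their usual meaning and conjugation by an element of \(G\) on a normal closed subgroup is an algebraic automorphism.

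First I would produce a characteristic vector subgroup of \(U\). Since \(\operatorname{char}\mathbb{C}=0\), the non‑trivial unipotent group \(U\) is connected, torsion‑free and nilpotent, so its centre \(C := Z(U)\) is non‑trivial. Being the centre of a connected unipotent group in characteristic zero, \(C\) is a connected commutative unipotent group, hence a vector group \(C \simeq \mathbb{G}_a^{\,n}\) for some \(n \geq 1\) (via the exponential correspondence with the centre of the nilpotent Lie algebra of \(U\)). As \(C\) is characteristic in \(U\) and \(U \lhd G\), the subgroup \(C\) is normal in \(G\).

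Next I would linearise and triangularise. Conjugation gives a homomorphism \(\rho\colon G \longrightarrow \Aut(\mathbb{G}_a^{\,n}) = \GL_n(\mathbb{C})\), where the identification holds because in characteristic zero every algebraic endomorphism of \(\mathbb{G}_a\) is linear. The image \(\rho(G)\) is solvable, being a quotient of \(G\); its Zariski closure is therefore a solvable linear algebraic group, whose identity component is connected solvable and so can be put in triangular form (Lie--Kolchin; cf.\ the reference used in \cref{lem:derived_length_linear_gp}). Intersecting \(\rho(G)\) with this identity component yields a finite‑index subgroup of \(\rho(G)\) admitting a common eigenvector \(v\), and I let \(G_1 \leq G\) be its preimage, a finite‑index subgroup. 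By construction \(G_1\) preserves the line \(\ell = \mathbb{C}\,v \subseteq C\).

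Finally I would set \(Z_1 := \ell\), viewed as a one‑dimensional subgroup of the vector group \(C \leq U\), so that \(Z_1 \simeq \mathbb{G}_a\); since \(G_1\) stabilises \(\ell\) it normalises \(Z_1\), and replacing \(G\) by \(G_1\) gives the assertion. The main obstacle is the first step: the fact that the centre of a unipotent group is a \emph{non‑trivial vector group} is exactly where characteristic zero is used (it can fail in positive characteristic) and relies on the Lie‑algebra/exponential description of unipotent groups. Once such a \(G\)-normal vector group is available, the rest is the routine ``solvable \(\Rightarrow\) virtually triangularisable'' argument supplying the invariant line after passing to a finite‑index subgroup.
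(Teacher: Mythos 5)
Your proposal is correct and follows essentially the same route as the paper: pass to the centre $Z(U)\simeq\mathbb{G}_a^n$, which is characteristic in $U$ and hence normal in $G$, linearise the conjugation action to get $G\to\GL_n(\mathbb{C})$, triangularise a finite-index subgroup of the solvable image (the paper cites Borel's corollary directly where you rederive it via Zariski closure and Lie--Kolchin), and take the resulting invariant line as $Z_1$. The only cosmetic difference is that the paper spells out why conjugation acts linearly on $\mathbb{G}_a^n$ by an interpolation argument, whereas you invoke the characteristic-zero fact that endomorphisms of $\mathbb{G}_a$ are linear; both are fine.
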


\begin{proof}
	Let \(Z = Z(U)\) be the centre of \(U\) which is nontrivial and normal in \(G\).
	Note that the commutative unipotent group \(Z\) is isomorphic to \(\mathbb{G}_a^n\),
	the additive affine algebraic group of dimension \(n\) for some \(n \geq 1\).
	Consider the conjugation action of \(G\) on \(Z\):
	\[
		c_g\colon x \mapsto gxg^{-1} \quad \text{for } x \in Z \text{ and } g \in G.
	\]
	This conjugation action \(c_g\) is linear:
	for a fixed \(x\in Z\simeq \mathbb{G}_a^n\),
	the morphism \(z\mapsto f(z)\coloneqq c_g(z.x)-z.c_g(x)\)
	is trivial for all integer points \(z\in \mathbb{G}_a\),
	so \(f(z)=0\) for all points \(z\in \mathbb{G}_a\).
	% One can show that this action is linear and induces a homomorphism \(G\longrightarrow \GL_n(\mathbb{C})\).
	Therefore, this induces a homomorphism \(G\longrightarrow \GL_n(\mathbb{C})\).
	Replacing \(G\) by a finite-index subgroup,
	we may assume that the image \(G'\) of the solvable group \(G\) in \(GL_n(\mathbb{C})\) is of triangular form
	(cf.~\cite{borel2012linear}*{Corollary, pp.~137}).
	Let \(Z_1 \simeq \mathbb{G}_a\) be an (\(1\)-dimensional) eigenspace of \(G'\).
	Then \(Z_1\) is stable under the conjugation action of \(G\)
	and hence is normal in \(G\).
\end{proof}

\section{Equivariant Fibrations}\label{sec:equivariant_fibrations}

In this section we will consider smooth compact complex surfaces \(X\) with algebraic dimension \(a(X) = 1\),
or with Kodaira dimension \(\kappa(X) = 1\).
If \(a(X) = 1\), there is a natural elliptic fibration,
the \emph{algebraic reduction} \(\pi\colon X \longrightarrow Y\) (cf.~\cite{barth2004compact}*{VI.~(5.1)~Proposition}),
which is a holomorphic map.
Here, by an \emph{elliptic fibration},
we mean a surjective morphism between two compact complex spaces with connected fibre,
such that a general fibre is isomorphic to a smooth elliptic curve.
For \(g \in \Aut(X)\), the image of a fibre \(F\) of \(\pi\) under \(g\) is another fibre;
otherwise the self-intersection number of \(g(F) + F\) is positive
and hence \(X\) is projective (cf.~\cite{barth2004compact}*{\Romannum{4}, Theorem~6.2}),
a contradiction (cf.~\cite{barth2004compact}*{\Romannum{4}, Corollary~6.5}).
Thus, \(\pi\) is \(\Aut(X)\)-equivariant.
When \(\kappa(X) = 1\),
the \emph{pluricanonical fibration} gives a natural elliptic fibration as well,
which is also \(\Aut(X)\)-equivariant.

In addition to the above,
we will also study affine \(\mathbb{A}^1\)-bundle \(\pi\colon A\longrightarrow E\)
over an elliptic curve \(E\).
Note that the fibre of \(\pi\colon A\longrightarrow E\) being an affine line \(\mathbb{A}^1\)
cannot dominate the base elliptic curve \(E\).
It follows that \(\pi\) is \(\Aut(A)\)-equivariant.
% Note that the log Kodaira dimension of the affine line \(\mathbb{A}^1\) is
% \(\overline{\kappa}(\mathbb{A}^1) = -\infty\) while \(\overline{\kappa}(E) = \kappa(E) = 0\).

Let \(\pi\colon X\longrightarrow Y\) be a morphism between complex spaces.
Let \(G\) be a complex Lie group contained in \(\Aut(X)\) such that \(\pi\circ g=\pi\)
for all \(g\in G\), i.e., \(G\leq \Aut_Y(X)\).
For any finite subset \(S\subseteq Y\),
denote by \(G_S\coloneqq \{g\in G\mid g|_{\pi^{-1}(S)}=\id\}\),
i.e., \(G_S\) is the kernel of the natural group homomorphism \(G \longrightarrow \prod_{y \in S} \Aut(X_y)\).
Note that any Lie group is second countable and hence has only countably many connected components.

The ideas of \cref{lem:discrete_G,prop:very_general_subset} come from Sheng Meng.

\begin{lemma}\label{lem:discrete_G}
	Suppose that \(\dim G = 0\).
	Then \(G_y\) is trivial for very general \(y\in Y\).
\end{lemma}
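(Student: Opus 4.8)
The plan is to use that a zero-dimensional Lie group is countable, reduce the problem to controlling a single non-identity element, and then bound via fibre dimensions the locus of $y$ whose entire fibre lies in the fixed locus of that element. Since $\dim G = 0$, every connected component of $G$ is a point; as $G$ is second countable it has only countably many components, so $G$ is a countable group. It therefore suffices to show that for each fixed $g \in G \setminus \{\id\}$ the ``bad'' set
\[
  B_g := \{\, y \in Y : X_y \subseteq \Fix(g)\,\}, \qquad X_y := \pi^{-1}(y),
\]
is contained in a proper analytic subset of $Y$: indeed $G_y$ is non-trivial precisely when $y \in B_g$ for some $g \neq \id$, so $\{y : G_y \neq \{\id\}\} = \bigcup_{g \neq \id} B_g$ is then a countable union of proper analytic subsets, and a very general $y$ avoids all of them. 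As in the fibration settings of this section and in the standing conventions of \cref{lem:fix_point_torsion_embedding}, I would assume $X$ and $Y$ irreducible reduced and $\pi$ surjective.

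Next I would analyse $\Fix(g)$. Since $g \neq \id$ and $X$ is irreducible, the fixed locus $\Fix(g)$ is a proper analytic subset (it cannot contain an open set, by the identity principle), so $k := \dim \Fix(g) < n := \dim X$; write $m := \dim Y$ and $d := n - m$ for the generic fibre dimension of $\pi$. The locus $Y_{>d} := \{y : \dim X_y > d\}$ is a proper analytic subset by upper semicontinuity of fibre dimension, and outside it every fibre has dimension exactly $d$, since a fibre of a surjection of irreducible spaces has every component of dimension $\geq n - m$. For $y \in B_g \setminus Y_{>d}$ the $d$-dimensional fibre $X_y$ lies in $\Fix(g)$, so some component of $X_y$ of dimension $d$ lies in some irreducible component $F_i$ of $\Fix(g)$. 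Running over the finitely many components $F_i$ and setting $\pi_i := \pi|_{F_i}$, I would put $Z_i := \overline{\pi_i(F_i)}$ if $\pi_i$ is not dominant, and $Z_i := \{y : \dim \pi_i^{-1}(y) \geq d\}$ if it is; in the latter case $\dim F_i \leq n - 1$ forces the generic fibre of $\pi_i$ to have dimension $\leq d - 1$, so $Z_i$ is a proper analytic subset again by upper semicontinuity. Each such $y$ then lies in some $Z_i$, giving $B_g \subseteq Y_{>d} \cup \bigcup_i Z_i$, a finite union of proper analytic subsets and hence, $Y$ being irreducible, a proper analytic subset of $Y$. This completes the reduction and the lemma follows.

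The main obstacle I anticipate is the fibre-dimension bookkeeping in the previous paragraph: one must correctly invoke upper semicontinuity of fibre dimension for the restricted maps $\pi_i$, handle the reducibility of both $\Fix(g)$ and the individual fibres, and pin down the hypotheses ($\pi$ surjective, $X$ irreducible) that guarantee the strict inequality $\dim \Fix(g) < \dim X$ and thereby force each dominant component $F_i$ to have strictly subgeneric fibres. By contrast, the countability of $G$ coming from $\dim G = 0$ and the passage to a very general $y$ avoiding a countable union of proper analytic subsets are routine.
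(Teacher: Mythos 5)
Your proof follows the same route as the paper's: since $\dim G = 0$ forces $G$ to be countable, one writes $\{y : G_y \neq \{\id\}\}$ as the countable union over $\id \neq g \in G$ of the sets $Y_g = \{y : g|_{X_y} = \id\}$, shows each is a proper closed subset of $Y$, and concludes that a very general $y$ avoids them all. The only difference is that you supply the fibre-dimension argument justifying the properness of each $Y_g$, which the paper simply asserts; your justification is sound.
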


\begin{proof}
	Let \(Y_g\coloneqq \{y\in Y \mid g|_{X_y}=\id\}\).
	Then \(Y_g\) is a proper closed subspace of \(Y\) for \(g\neq \id\).
	Let \(Z\coloneqq \bigcup_{\id\neq g\in G}Y_g\).
	Then any \(y\in Y\setminus Z\) is a very general choice.
	Here we use the condition that \(G\) is countable since \(\dim G = 0\).
\end{proof}

A very general finite subset \(S\subseteq Y\) means that every element of \(S\) is very generally chosen in \(Y\).

\begin{proposition}\label{prop:very_general_subset}
	For very general finite subset \(S\subseteq Y\), \(G_S\) is trivial.
\end{proposition}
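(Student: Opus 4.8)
The plan is to induct on \(d \coloneqq \dim G\), peeling off one very general fibre at a time so as to strictly decrease the dimension of the acting group, and to invoke \cref{lem:discrete_G} once the dimension has been driven down to \(0\). If \(d = 0\) the group \(G\) is countable, so \cref{lem:discrete_G} already supplies a single very general \(y\in Y\) with \(G_y = \{\id\}\), and \(S = \{y\}\) does the job. Throughout I assume \(\pi\) is surjective with irreducible general fibre (as holds in all our applications; otherwise one replaces \(Y\) by \(\overline{\pi(X)}\)), and I use that a compact complex space is a Baire space in which proper closed analytic subsets are nowhere dense.

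Now suppose \(d \geq 1\) and let \(G^{0}\leq G\) be the identity component, a connected positive-dimensional Lie group acting faithfully on \(X\) (as \(G\leq\Aut(X)\)). Its fixed locus \(Z \coloneqq \Fix(G^{0})\) is the common zero set of the finitely many holomorphic vector fields spanning the Lie algebra of \(G^{0}\), hence a closed analytic subset of \(X\); moreover \(Z\subsetneq X\), since otherwise \(G^{0}\) would act trivially, contradicting faithfulness. The heart of the argument is the following claim: for very general \(y\in Y\) one has \(X_{y}\not\subseteq Z\). Since \(\dim G_{y}\leq d\) always, with equality if and only if \((G_{y})^{0}=G^{0}\), i.e.\ if and only if \(G^{0}\) fixes \(X_{y}\) pointwise, i.e.\ if and only if \(X_{y}\subseteq Z\), the claim is exactly the statement that \(\dim G_{y}<d\) for very general \(y\). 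Granting it, fix such a \(y\); then \(G_{y}\) is a closed Lie subgroup of strictly smaller dimension which again lies in \(\Aut_{Y}(X)\), so by induction there is a very general finite \(S'\subseteq Y\) with \((G_{y})_{S'}=G_{\{y\}\cup S'}\) trivial. Taking \(S=\{y\}\cup S'\) finishes the induction, every point of \(S\) having been chosen very generally.

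It remains to prove the claim, which is the main obstacle, since ``the whole fibre lies in \(Z\)'' is an analytic rather than a merely generic condition. I must show that the bad set \(N \coloneqq \{y\in Y : X_{y}\subseteq Z\}\) is contained in a proper closed analytic subset of \(Y\). Decompose \(Z\) into its finitely many irreducible components \(Z=\bigcup_{j}Z_{j}\) (finitely many because \(X\) is compact). Away from the proper analytic locus on which \(X_{y}\) fails to be irreducible, the containment \(X_{y}\subseteq Z\) forces \(X_{y}\subseteq Z_{j}\) for some \(j\). If \(\pi(Z_{j})\subsetneq Y\), then such \(y\) lie in the proper closed analytic set \(\pi(Z_{j})\) (analytic by Remmert's theorem, \(\pi\) being proper). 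If instead \(Z_{j}\) dominates \(Y\), then \(\dim Z_{j}<\dim X\) forces the generic fibre of \(\pi|_{Z_{j}}\) to have dimension strictly below \(\dim X_{y}\), so \(X_{y}\subseteq Z_{j}\) can occur only on the proper closed analytic locus where the fibre dimension of \(\pi|_{Z_{j}}\) jumps. In every case \(N\) is contained in a finite union of proper closed analytic subsets, hence is not very general, proving the claim.

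The essential inputs are thus faithfulness of the \(G^{0}\)-action (to guarantee \(Z\subsetneq X\)) and the dimension count on the irreducible components of \(Z\); the delicate point, for which irreducibility of the general fibre is used, is upgrading ``\(\dim(Z_{j}\cap X_{y})<\dim X_{y}\)'' to the genuine non-containment ``\(X_{y}\not\subseteq Z_{j}\)''. Note that the induction terminates in at most \(d+1\) steps, so one may in fact take \(\lvert S\rvert \leq \dim G + 1\).
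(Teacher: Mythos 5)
Your argument is essentially the paper's: induct on \(\dim G\), observe that outside the locus \(T=\{y\in Y\mid G^{0}|_{X_y}=\id\}\) one has \(\dim G_y<\dim G\), adjoin one very general point at a time until the dimension is zero, and finish with \cref{lem:discrete_G} (which you simply fold into the base case rather than applying it at the end). The one substantive addition is your detailed verification that \(T\) is contained in a proper closed analytic subset --- a step the paper merely asserts --- but as written it assumes \(X\) compact and \(\pi\) proper (finitely many components of \(\Fix(G^{0})\), Remmert's proper mapping theorem), and these hypotheses fail in the Enoki-surface application of \cref{thm:enoki}, where \(\pi\) is the affine \(\mathbb{A}^1\)-bundle \(A=X\setminus C\longrightarrow E\) with \(A\) non-compact; the fix is immediate (the vector fields generating \(G^{0}\leq\Aut(X)\) are defined on all of the compact surface \(X\), so \(\Fix(G^{0})\cap A\) still has finitely many components and the fibre-dimension count goes through), but it should be said. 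Apart from that caveat the proof is correct.
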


\begin{proof}
	First, we show by induction on \(\dim G\) that \(\dim G_S=0\) for very general finite subset \(S\subseteq Y\).
	When \(\dim G = 0\), the claim follows from \cref{lem:discrete_G}.
	Now assume that \(\dim G \geq 1\).
	Denote by \(T\coloneqq \{y\in Y\mid G_0|_{X_y}=\id\}\),
	where \(G_0\) is the neutral component of \(G\).
	Note that \(T\subsetneq Y\) is a closed subspace.
	Then for \(y\notin T\), we have \(\dim G_y<\dim G\).
	By induction, \(\dim(G_y)_S=0\) for any \(y\notin T\) and very general finite subset \(S\subseteq Y\) (depending on \(y\)).
	Note that \((G_y)_S = G_{\{y\}\cup S}\).
	So the claim is proved.

	Then by \cref{lem:discrete_G},
	we see that \(G_{S \cup \{y\}} = (G_S)_y\) is trivial for very general \(y\in Y\).
\end{proof}

\begin{lemma}\label{lem:algebraic_reduction_base_finite}
	Let \(X\) be either a (primary or secondary) Kodaira surface, or a minimal smooth compact complex surface of Kodaira dimension \(1\).
	Then there is an \(\Aut(X)\)-equivariant elliptic fibration \(\pi\colon X\longrightarrow Y\)
	to a smooth curve \(Y\).
	Also, the natural image of \(\Aut(X)\) in \(\Aut(Y)\) is finite.
\end{lemma}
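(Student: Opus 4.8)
The plan is to obtain the fibration from the discussion at the start of \cref{sec:equivariant_fibrations} and then to bound the induced action on the base. For a minimal surface of Kodaira dimension $1$ we take the pluricanonical fibration; for a (primary or secondary) Kodaira surface we have $a(X) = 1$ and take the algebraic reduction. In either case this is an $\Aut(X)$-equivariant elliptic fibration $\pi\colon X \to Y$ onto a smooth curve, and it only remains to show that the induced homomorphism $\rho\colon \Aut(X) \to \Aut(Y)$ has finite image.

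The main device is that $\rho(\Aut(X))$ preserves the \emph{$j$-invariant} of the fibration. Indeed, since $\pi \circ g = \rho(g)\circ\pi$, each $g \in \Aut(X)$ carries the fibre $X_y$ biholomorphically onto $X_{\rho(g)(y)}$, so these two elliptic curves share the same modulus; hence the meromorphic function $j\colon Y \to \mathbb{P}^1$ recording the modulus of $X_y$ satisfies $j \circ \rho(g) = j$ for all $g$. First I would settle the \emph{non-isotrivial} case, where $j$ is non-constant: then $\rho(\Aut(X))$ is contained in the group of automorphisms of $Y$ over $\mathbb{P}^1$, i.e.\ in $\Aut\!\big(\mathbb{C}(Y)/\mathbb{C}(j)\big)$, the automorphism group of a finite field extension, which is finite. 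This disposes at once of every Kodaira dimension $1$ surface with varying moduli.

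In the \emph{isotrivial} case ($j$ constant) I would argue through the base orbifold $(Y,\Delta)$ with $\Delta = \sum_i (1-1/m_i)\,p_i$ recording the multiple fibres, noting that $\rho(\Aut(X))$ permutes the finite set of points over which $\pi$ degenerates and hence lands in $\Aut(Y,\Delta)$. When $(Y,\Delta)$ is hyperbolic, $\Aut(Y,\Delta)$ is finite. For an isotrivial surface of Kodaira dimension $1$ all positivity of $K_X$ comes from the base, so $\kappa(X)=1$ forces $(Y,\Delta)$ to be of general type, hence hyperbolic, and we are done. For a Kodaira surface, however, $\kappa(X)=0$, so the base orbifold is only \emph{Euclidean} (for a secondary Kodaira surface it is of type $(2,2,2,2)$, $(3,3,3)$, $(2,4,4)$ or $(2,3,6)$), and hyperbolicity fails. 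Here I would instead use the canonical finite \'etale cover $\widetilde{X}\to X$ by a primary Kodaira surface with deck group $G$, compatible with the fibrations $\widetilde{X}\to E \to E/G = Y$: every automorphism of $X$ lifts to $\widetilde{X}$ normalising $G$, and since $G\leq \Aut(E)$ acts by rotations with a fixed point, only finitely many elements of $\Aut(E)$ normalise it, so the image on $Y = E/G$ is finite.

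The main obstacle is the single configuration that escapes all three devices: a \emph{primary} Kodaira surface, where $\pi\colon X \to E$ is an isotrivial principal elliptic bundle over an elliptic curve with no singular and no multiple fibres. Then $j$ is constant, $(Y,\Delta) = (E,\emptyset)$ is not hyperbolic, and there is no deck group to normalise. One is forced to pass to the explicit affine model $X = \mathbb{C}^2/\Gamma$ with $\Gamma$ a lattice of Heisenberg type and to determine exactly which translations of the base $E$ lift to $\Aut(X)$; controlling the discrete part of the action on $E$ in this way is the delicate heart of the argument, and I expect this primary Kodaira case to be where essentially all of the difficulty of the lemma is concentrated.
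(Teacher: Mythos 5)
Your construction of the fibration and your reduction to bounding the image in \(\Aut(Y)\) match the paper's setup exactly; the paper, however, then simply quotes \cite{shramov2020finite}*{Corollary~3.4, Lemma~3.5} and \cite{prokhorov2020bounded}*{Proposition~1.2} for the finiteness, whereas you attempt a self-contained argument. Your \(j\)-invariant argument in the non-isotrivial case and the orbifold argument for isotrivial properly elliptic surfaces are sound. (For a secondary Kodaira surface you do not actually need the deck-group device: the base is \(\mathbb{P}^1\) and the image must permute the \(\geq 3\) multiple-fibre points, so it is finite regardless of hyperbolicity; your normaliser argument also needs care because the deck group may contain translations of \(E\), whose normaliser in \(\Aut(E)\) is infinite on its own.)

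The genuine gap is the primary Kodaira surface case, which you correctly isolate as the crux but then leave as an expectation rather than a proof. Nothing in your three devices applies there, and ``determine exactly which translations of the base lift'' is precisely the missing content, not a routine verification. The argument that closes it: a primary Kodaira surface is the quotient by \(z\mapsto\lambda z\) of the total space of the \(\mathbb{C}^{*}\)-bundle attached to a line bundle \(L\) on \(E\) with \(\deg L = m\neq 0\); an automorphism covering the translation \(t_a\) of \(E\) forces \(t_a^{*}L\simeq L\), and since \(t_a^{*}L\otimes L^{-1}\in\Pic^0(E)\) corresponds to \(m\cdot a\) under the polarisation, only the \(m\)-torsion translations lift. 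Combined with the finiteness of \(\Aut(E,0)\), this bounds the image in \(\Aut(E)\). Without this (or an equivalent appeal to the literature, as the paper makes), your proof does not establish the lemma for primary Kodaira surfaces, and hence not for the class of surfaces in the statement.
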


\begin{proof}
	We may take \(\pi\) to be either the algebraic reduction or the pluricanonical fibration
	(see the first paragraph of this section).
	Then our claim follows from \cite{shramov2020finite}*{Corollary~3.4, Lemma~3.5}
	and \cite{prokhorov2020bounded}*{Proposition~1.2}.
\end{proof}

\begin{theorem}\label{thm:elliptic_fibration_all}
	Let \(X\) be either a (primary or secondary) Kodaira surface or a minimal smooth compact complex surface of Kodaira dimension \(1\).
	Then \(\Aut(X)\) is virtually abelian.
	In particular, \(\Aut(X)\) satisfies T-Jordan property, the Tits alternative
	and every subgroup \(G \leq \Aut(X)\) is virtually abelian and hence virtually solvable with
	virtual derived length \(\ell_{\mathrm{vir}}(G) \leq 1\).
\end{theorem}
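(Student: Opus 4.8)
The plan is to exploit the $\Aut(X)$-equivariant elliptic fibration $\pi\colon X\longrightarrow Y$ supplied by \cref{lem:algebraic_reduction_base_finite}, whose crucial extra feature is that the natural image of $\Aut(X)$ in $\Aut(Y)$ is finite. Writing $G = \Aut(X)$ and $G_Y = \Aut_Y(X)$ for the group of fibre-preserving automorphisms, this is exactly the kernel of $G\longrightarrow \Aut(Y)$, so we obtain a short exact sequence
\[
	1\longrightarrow G_Y\longrightarrow G\longrightarrow \operatorname{Im}(G\longrightarrow \Aut(Y))\longrightarrow 1
\]
with finite cokernel. Hence $G_Y$ has finite index in $G$, and since virtual abelianness passes from a finite-index subgroup to the ambient group, it suffices to prove that $G_Y$ is virtually abelian. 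I would therefore work with $G_Y$ throughout, noting that it is a closed complex Lie subgroup of $\Aut(X)$ and so, being second countable, qualifies as an input for \cref{prop:very_general_subset}.

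Next I would apply \cref{prop:very_general_subset} to $G_Y \leq \Aut_Y(X)$: there is a finite subset $S = \{y_1,\dots,y_k\}\subseteq Y$ such that $(G_Y)_S$ is trivial, i.e.\ the restriction homomorphism
\[
	G_Y\longrightarrow \prod_{i=1}^{k}\Aut(X_{y_i})
\]
is injective. Because the locus over which $\pi$ has singular or multiple fibres is a proper closed subset of the curve $Y$ (finitely many points), and ``very general'' means avoiding a countable union of proper closed subsets, I can simultaneously arrange that each $X_{y_i}$ is a smooth elliptic curve $E_i$. This realises $G_Y$ as a subgroup of the finite product $\prod_{i=1}^{k}\Aut(E_i)$.

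The final step is the elementary structure of the automorphism group of an elliptic curve: for a smooth elliptic curve $E$, the translations form a normal abelian subgroup isomorphic to $E$, with finite quotient $\Aut(E,0)$ (cyclic of order at most $6$), so $\Aut(E)$ is virtually abelian. A finite product of virtually abelian groups is virtually abelian, and a subgroup $K$ of a virtually abelian group $H$ is again virtually abelian, since for a finite-index abelian $A\leq H$ the intersection $K\cap A$ is a finite-index abelian subgroup of $K$. Thus $G_Y$, and hence $\Aut(X)$, is virtually abelian. The remaining ``in particular'' assertions are then formal: a virtually abelian group is T-Jordan (take $J$ to be the index of the abelian subgroup), it is virtually solvable with no non-abelian free subgroup so it satisfies the Tits alternative, and its finite-index abelian subgroup has derived length $\leq 1$, whence $\ell_{\mathrm{vir}}(G)\leq 1$ for every subgroup $G$.

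I expect no deep obstacle here, only bookkeeping: the two points requiring care are verifying that $G_Y$ is a genuine complex Lie group so that \cref{prop:very_general_subset} applies, and checking that a single very general choice of $S$ can witness both the triviality of $(G_Y)_S$ and the smoothness of all the fibres $X_{y_i}$. Once the embedding into $\prod_i \Aut(E_i)$ is secured, virtual abelianness follows immediately.
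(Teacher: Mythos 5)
Your proposal is correct and follows essentially the same route as the paper: the short exact sequence coming from \cref{lem:algebraic_reduction_base_finite} with finite quotient, the embedding of \(\Aut_Y(X)\) into \(\prod_{y\in S}\Aut(X_y)\) via \cref{prop:very_general_subset}, and the virtual abelianness of automorphism groups of elliptic curves. The extra care you take (choosing \(S\) to avoid singular fibres, checking \(\Aut_Y(X)\) is a Lie group) is reasonable bookkeeping that the paper leaves implicit.
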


\begin{proof}
	By \cref{lem:algebraic_reduction_base_finite}, there is a short exact sequence
	\[
		1\longrightarrow \Aut_Y(X)\longrightarrow \Aut(X)\longrightarrow \Gamma\longrightarrow 1,
	\]
	where \(\pi\colon X\longrightarrow Y\) is an elliptic fibration and
	\(\Gamma\) is a finite subgroup of \(\Aut(Y)\).
	By \cref{prop:very_general_subset},
	there exists a finite subset \(S\) of \(Y\) such that
	the natural map \(\Aut_Y(X)\hookrightarrow \prod_{y\in S}\Aut(X_y)\) is an injection.
	Since \(X_y\) is an elliptic curve,
	\(\prod_{c\in S}\Aut(X_y)\) and hence \(\Aut_Y(X)\) are virtually abelian.
	Then it is clear that \(\Aut(X)\) is virtually abelian.
\end{proof}

\begin{theorem}\label{thm:enoki}
	Let \(X \in \Xi\) be an Enoki surface.
	Then \(\Aut(X)\) is virtually solvable (and hence \(\Aut(X)\) satisfies the Tits alternative)
	and every subgroup \(G \leq \Aut(X)\) is virtually solvable with \(\ell_{\mathrm{vir}}(G) \leq 3\).
\end{theorem}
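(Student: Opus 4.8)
The plan is to analyze the structure of an Enoki surface and reduce the problem to understanding an explicit extension. First I would recall that an Enoki surface $X \in \Xi$ carries a distinguished curve structure: Enoki surfaces are minimal class $\Romannum{7}$ surfaces admitting a (possibly degenerate) elliptic or cycle-of-rational-curves configuration, and crucially they are known to possess a \emph{global spherical shell} together with an affine structure on a covering. The key geometric input I would invoke is that $X$ (or a suitable modification) admits a natural fibration-like structure whose generic fibre gives rise, after passing to an associated affine $\mathbb{A}^1$-bundle $A \longrightarrow E$ over an elliptic curve $E$ as discussed in \cref{sec:equivariant_fibrations}. Since the section mentions that such affine $\mathbb{A}^1$-bundles have $\Aut(A)$-equivariant projection $\pi\colon A \longrightarrow E$, I expect the automorphism group of the Enoki surface to sit inside, or project onto, the automorphism group of such a bundle.

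The main reduction I would carry out is to produce a short exact sequence of the form
\[
	1 \longrightarrow N \longrightarrow \Aut(X) \longrightarrow Q \longrightarrow 1,
\]
where $N$ is the kernel of the action on the base curve $E$ (or on the relevant cohomological/Picard data) and $Q$ is the induced image. I would then argue that $Q$ is small---either finite or virtually abelian---by using that the induced action on the elliptic base $E$ (analogous to \cref{lem:algebraic_reduction_base_finite}) has finite or virtually abelian image, since $\Aut(E)$ of an elliptic curve is itself virtually abelian. For the kernel $N$, I would use that elements fixing the base act fibrewise on the $\mathbb{A}^1$-fibres, so $N$ embeds (via \cref{prop:very_general_subset} applied to a very general finite subset of fibres) into a product of automorphism groups of affine lines; since $\Aut(\mathbb{A}^1) \cong \mathbb{G}_a \rtimes \mathbb{G}_m$ is solvable (indeed metabelian), $N$ is solvable with controlled derived length.

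With these pieces in place, the derived-length bound follows from the machinery of \cref{lem:virtually_solvable_ses}. Concretely, if $N$ is solvable with $\ell(N) \leq 2$ (being a subgroup of a product of copies of the metabelian group $\mathbb{G}_a \rtimes \mathbb{G}_m$) and $Q$ is virtually solvable with $\ell_{\mathrm{vir}}(Q) \leq 1$ (being virtually abelian, using that $\Aut(E)$ is virtually abelian), then \cref{lem:virtually_solvable_ses}~(1) gives
\[
	\ell_{\mathrm{vir}}(\Aut(X)) \leq \ell(N) + \ell_{\mathrm{vir}}(Q) \leq 2 + 1 = 3.
\]
The Tits alternative part is then automatic from \cref{lem:virtually_solvable_ses}~(4), since both a solvable group and a virtually solvable group trivially satisfy it, and a virtually solvable group contains no non-abelian free subgroup.

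The hard part will be establishing the precise structure of $N$ and pinning down its derived length, since the fibrewise description of automorphisms of an Enoki surface is not literally an $\mathbb{A}^1$-bundle over an elliptic curve---rather, one must carefully use the specific geometry of Enoki surfaces (their explicit construction via contracting a curve in an affine-bundle-type total space, following \cite{enoki1981surfaces}) to identify the relevant fibration and to verify that the fibrewise automorphisms really do land in a solvable group of the expected derived length. A secondary obstacle is verifying $\Aut(X)$-equivariance of whatever fibration one extracts, so that the short exact sequence is genuinely available; I anticipate that the curve on an Enoki surface is unique or canonically determined, forcing $\Aut(X)$ to preserve it and hence the associated structure, which is what makes the equivariance work.
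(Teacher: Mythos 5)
Your proposal is correct and follows essentially the same route as the paper: Enoki's Main Theorem gives a cycle of rational curves \(C\) with \(A = X\setminus C\) an affine \(\mathbb{A}^1\)-bundle over an elliptic curve, the finiteness of curves on a surface with \(a(X)=0\) makes \(C\) invariant under a finite-index subgroup so that \(\pi\colon A\to E\) is equivariant, and then the short exact sequence together with \cref{prop:very_general_subset}, the metabelian group \(\Aut(\mathbb{A}^1)\), and \cref{lem:virtually_solvable_ses}~(1) yields \(\ell_{\mathrm{vir}}\leq 2+1=3\). The "hard parts" you flag (equivariance and the fibrewise structure of the kernel) are resolved exactly as you anticipate.
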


In the proof below,
by \emph{a cycle of rational curves},
we mean a reduced (singular) complex curve with only nodal singularities such that
its dual graph is a cycle and
each component of its normalisation is a smooth rational curve.

\begin{proof}
	By \cite{enoki1981surfaces}*{Main Theorem},
	\(X\) has a cycle of rational curves \(C\)
	such that \(A \coloneqq X \setminus C\),
	the complement of \(C\) in \(X\),
	is an affine \(\mathbb{A}^1\)-bundle \(\pi\colon A \longrightarrow E\) over an elliptic curve \(E\).
	Since the algebraic dimension \(a(X)\) is \(0\),
	the surface \(X\) contains finitely many (irreducible) curves
	(cf.~\cite{barth2004compact}*{IV.\ Theorem~8.2}).
	Replacing \(\Aut(X)\) by a finite-index subgroup,
	we may assume that \(C\) is \(\Aut(X)\)-invariant.
	Then \(\Aut(X)\) acts on \(A = X \setminus C\) biholomorphically and faithfully
	and \(\pi\) is \(\Aut(X)\)-equivariant
	(see the second paragraph of this section).

	Consider the short exact sequence
	\[
		1\longrightarrow K\longrightarrow \Aut(X)\longrightarrow \Gamma\longrightarrow 1,
	\]
	where \(K \leq \Aut_{E}(A)\) and \(\Gamma\leq \Aut(E)\).
	Since \(E\) is an elliptic curve,
	\(\Aut(E)\) and hence \(\Gamma\) are virtually abelian,
	or equivalently, virtually solvable with virtual derived length \(\leq 1\).
	By \cref{prop:very_general_subset},
	there exists a finite subset \(S\) of \(E\) such that
	the natural map \(K \hookrightarrow \prod_{e \in S}\Aut(A_e)\) is an injection.
	Since \(A_e \simeq \mathbb{A}^1\), the group
	\(\prod_{e \in S}\Aut(A_e)\) and hence \(K\) are metabelian,
	or equivalently, solvable with derived length \(\leq 2\).
	By \cref{lem:virtually_solvable_ses}~(1),
	\(\Aut(X)\) is virtually solvable with virtual derived length \(\leq 3\).
	It is clear that any subgroup \(G \leq \Aut(X)\) is also virtually solvable
	and its virtual derived length \(\ell_{\mathrm{vir}}(G) \leq 3\).
\end{proof}

\section{Surfaces with Invariant Curves}\label{sec:surfaces_with_invariant_curves}

\begin{lemma}\label{lem:cpt_surface_with_inv_curves}
	Let \(X\) be a smooth compact complex surface.
	Suppose that there is a finite non-empty \(\Aut(X)\)-invariant set \(\Sigma\) of (irreducible) curves on \(X\).
	Then \(\Aut(X)\) is T-Jordan.
\end{lemma}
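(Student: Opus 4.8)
The plan is to pass to a finite-index subgroup that fixes a single invariant curve, linearise the action at a point of that curve, and then treat the normalisation genus by genus; the elliptic case will be the crux.

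First I would let $\Aut(X)$ act on the finite set $\Sigma$ by permutation. The kernel $G$ of the resulting homomorphism $\Aut(X)\to\Sym(\Sigma)$ has finite index and fixes every $C\in\Sigma$ setwise. Since T-Jordan passes between a group and any finite-index subgroup (with a correspondingly enlarged constant), it suffices to prove that $G$ is T-Jordan. I fix one curve $C\in\Sigma$ with normalisation $\nu\colon\tilde C\to C\subseteq X$, where $\tilde C$ is a connected smooth compact Riemann surface of genus $h$, and record the exact sequence
\[
	1\longrightarrow K\longrightarrow G\stackrel{\phi}{\longrightarrow}\Aut(\tilde C),
\]
where $K$ is the subgroup of automorphisms fixing $C$ pointwise. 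For a torsion subgroup $\Gamma\le G$ I set $K_\Gamma\coloneqq\Gamma\cap K$ and $\bar\Gamma\coloneqq\phi(\Gamma)$; the goal is an abelian subgroup of $\Gamma$ of index bounded independently of $\Gamma$.

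The kernel is easy to control. Choosing a smooth point $x_0\in C$, every $g\in K_\Gamma$ fixes $x_0$ and, fixing $C$ pointwise, fixes the tangent line $T_{x_0}C$, so in a suitable basis $dg_{x_0}=\left(\begin{smallmatrix}1 & * \\ 0 & \lambda(g)\end{smallmatrix}\right)$ with $\lambda(g)$ the eigenvalue on the normal direction. By \cref{lem:fix_point_torsion_embedding} the representation $K_\Gamma\to\GL(T_{x_0}X)$ is faithful, and since its unipotent part carries no nontrivial torsion, $g\mapsto\lambda(g)$ embeds $K_\Gamma$ into $\mathbb C^{\times}$; hence $K_\Gamma$ is abelian. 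I then split by $h$. If $h\ge 2$ then $\Aut(\tilde C)$ is finite, so $[\Gamma:K_\Gamma]=\lvert\bar\Gamma\rvert$ is bounded and $K_\Gamma$ is an abelian subgroup of $\Gamma$ of bounded index. If $h=0$ then $\bar\Gamma\le\PGL_2(\mathbb C)$ is a torsion subgroup of a connected Lie group, hence by \cref{thm:connected_lie_group_t_jordan} has an abelian, and after a further bounded-index step cyclic (diagonalisable), subgroup fixing a point $\tilde x\in\tilde C=\mathbb P^1$; its preimage in $\Gamma$ has bounded index and fixes $\nu(\tilde x)\in X$, so by \cref{lem:fix_point_torsion_embedding} it embeds into $\GL_2(\mathbb C)$ and is T-Jordan with the universal constant of $\GL_2(\mathbb C)$.

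The genus-one case is the main obstacle, since a group of translations of $\tilde C\cong E$ has no fixed point and the linearisation trick fails. I would first dispose of degenerate configurations: the singular points of the curves in $\Sigma$ together with their pairwise intersections form a finite $\Aut(X)$-invariant set, and if it is nonempty a finite-index subgroup fixes one of its points and we again embed into $\GL_2(\mathbb C)$. Thus I may assume the curves in $\Sigma$ are smooth and disjoint, so $C\cong E$ is a smooth elliptic curve with genuine normal bundle $N=N_{C/X}$. Replacing $\Gamma$ by the index-$\le 6$ subgroup $\Gamma_1$ whose image lies in the translation part of $\Aut(E)$, every translation lifting to $\Gamma_1$ preserves $N$ up to isomorphism, hence lies in $K(N)=\ker(\phi_N\colon E\to\hat E)$. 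If $\deg N\ne 0$ this group is finite of order $(\deg N)^2$, so $\phi(\Gamma_1)$ is bounded and $K_\Gamma$ is an abelian subgroup of $\Gamma_1$ of bounded index. If $\deg N=0$, then $K_\Gamma$ is central in $\Gamma_1$ (conjugation preserves the normal eigenvalue $\lambda$, which is injective on $K_\Gamma$), so $\Gamma_1$ is nilpotent of class $\le 2$ with commutator pairing valued in $K_\Gamma$; since the commutator pairing of a degree-zero line bundle is trivial, $\Gamma_1$ is abelian. In every case $\Gamma$ acquires an abelian subgroup of index bounded in terms of $X$ alone, proving that $G$, and hence $\Aut(X)$, is T-Jordan.
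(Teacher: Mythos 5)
Your proof is essentially correct, but it takes a genuinely different route from the paper's. The paper splits only into ``$C$ singular'' (pass to a finite-index subgroup fixing a point of $C$ and embed torsion subgroups into $\GL_2(\mathbb{C})$ via \cref{lem:fix_point_torsion_embedding}) and ``$C$ smooth'', and in the smooth case it immediately applies the normal-bundle homomorphism $\mathcal{N}\colon \Aut(X,C)\to\Aut(\mathcal{N}_{C/X}\to C)$, which is injective on torsion subgroups, followed by the embedding of $\Aut(\mathcal{N}_{C/X}\to C)$ into $\Aut(\mathbb{P}_C(\mathcal{N}_{C/X}\oplus\mathcal{O}))$; since the latter is the automorphism group of a projective surface, \cref{thm:class_c} finishes the proof in one stroke, with no case division by genus. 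Your argument replaces that appeal to the (already heavy) T-Jordan property for Fujiki's class $\mathcal{C}$ by an elementary genus-by-genus analysis; this is more self-contained and makes visible exactly where the difficulty sits, namely a smooth elliptic curve moved by translations. The two proofs are in fact close relatives: your elliptic case with $\deg N=0$ amounts to the statement that the image of $\Gamma_1$ in $\Aut(N\to E)$ lies in Mumford's theta group $\mathcal{G}(N)$, whose commutator pairing is the Weil pairing $e^N$, trivial for $N\in\Pic^0(E)$.

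That last step is the one place you must flesh out. As written, ``the commutator pairing of a degree-zero line bundle is trivial, hence $\Gamma_1$ is abelian'' conflates the commutator pairing of $\Gamma_1$ (a pairing $\bar\Gamma_1\times\bar\Gamma_1\to K_{\Gamma_1}$) with the pairing $e^N$ attached to $N$, and you have not said why the two agree. The missing link is the derivative homomorphism $\mathcal{N}\colon\Gamma_1\to\Aut(N\to E)$: it restricts on $K_{\Gamma_1}$ to fibrewise scaling by your $\lambda$, it sends $\Gamma_1$ into the theta group because $\phi(\Gamma_1)\subseteq K(N)$, and therefore $\lambda([g,h])=[\mathcal{N}(g),\mathcal{N}(h)]=e^N(a,b)=1$; injectivity of $\lambda$ on the torsion group $K_{\Gamma_1}$ then gives $[g,h]=\id$. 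With that sentence added (and it is the same homomorphism $\mathcal{N}$ the paper uses), the argument closes. The remaining cases --- the reduction to smooth disjoint curves via the invariant set of singular and intersection points, the embedding $K_\Gamma\hookrightarrow\mathbb{C}^{\times}$, genus at least $2$, genus $0$ via a bounded-index point stabiliser in $\PGL_2(\mathbb{C})$, and $\deg N\neq 0$ via $\lvert K(N)\rvert=(\deg N)^2$ --- are all correct, with all indices bounded in terms of $X$ alone.
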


\begin{proof}
	Let \(C\) be one of the curves from \(\Sigma\).
	Then the group \(\Aut(X,C)\) of automorphisms of \(X\) that preserves the curve \(C\)
	has finite index in \(\Aut(X)\).
	We only need to show that \(\Aut(X,C)\) is T-Jordan.

	Assume first that \(C\) is singular.
	Then \(\Aut(X,C)\),
	after replacing by a subgroup of finite index,
	fixes some point on \(C\).
	Now \cref{lem:fix_point_torsion_embedding} implies that \(\Aut(X,C)\) is embedded into \(\GL_2(\mathbb{C})\).
	Therefore, the group \(\Aut(X,C)\) is T-Jordan (cf.~\cref{thm:connected_lie_group_t_jordan}).

	Now we assume that \(C\) is smooth and let \(G \leq \Aut(X,C)\) be a torsion subgroup.
	Let \(\mathcal{N}_{C/X}\) be the normal bundle of \(C\) in \(X\).
	Consider the natural group homomorphism
	\[
		\mathcal{N}\colon \Aut(X,C)\longrightarrow \Aut(\mathcal{N}_{C/X}\to C)
	\]
	via \(\sigma\mapsto \mathcal{N}_{\sigma}\) (cf.~\cite{meng2022jordan}*{Notation~2.2}).
	By~\cite{meng2022jordan}*{Lemma~2.3}, \(\ker\mathcal{N}\cap G=\{\id\}\).
	Moreover, by the proof of~\cite{meng2022jordan}*{Theorem~3.1},
	there is a monomorphism
	\[
		\Aut(\mathcal{N}_{C/X}\to C)\longrightarrow \Aut(\mathbb{P}_C(\mathcal{N}_{C/X}\oplus \mathcal{O})).
	\]
	So we may view \(G\) as a subgroup of \(\Aut(\mathbb{P}_C(\mathcal{N}_{C/X}\oplus \mathcal{O}))\).
	Note that \(C\) is projective.
	So \(\mathbb{P}_C(\mathcal{N}_{C/X}\oplus \mathcal{O})\) is also a projective manifold.
	Then the theorem follows from~\cref{thm:class_c}.
\end{proof}

\begin{corollary}\label{cor:alg_dim_zero_contains_curves}
	Let \(X\) be a compact complex surface with \(a(X)=0\).
	Assume that \(X\) contains at least one curve.
	Then \(\Aut(X)\) is T-Jordan.
\end{corollary}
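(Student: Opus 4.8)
The plan is to apply \cref{lem:cpt_surface_with_inv_curves} directly, so the entire matter reduces to exhibiting a finite, non-empty, \(\Aut(X)\)-invariant family of irreducible curves on \(X\). Since that lemma is stated for smooth surfaces, I would first reduce to the smooth case (if \(X\) is not already smooth): passing to an \(\Aut(X)\)-equivariant resolution \(\pi\colon X'\longrightarrow X\) (cf.~\cite{bierstone1997canonical}*{Theorem~13.2(2)(ii)}), as in the proof of \cref{thm:tits_alternative_class_c}, realises \(\Aut(X)\) as a subgroup of \(\Aut(X')\). The algebraic dimension is a bimeromorphic invariant, so \(a(X')=a(X)=0\), and \(X'\) still contains a curve (e.g.\ the strict transform of a curve on \(X\), or an exceptional divisor). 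Since every torsion subgroup of \(\Aut(X)\) is then a torsion subgroup of \(\Aut(X')\), the T-Jordan property passes from \(\Aut(X')\) to \(\Aut(X)\) with the same constant; thus it suffices to treat the smooth case, and I may assume \(X\) is smooth.

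Now let \(\Sigma\) be the set of all irreducible curves on \(X\). Because every \(g\in\Aut(X)\) is a biholomorphism, it carries irreducible curves to irreducible curves, so \(\Sigma\) is tautologically \(\Aut(X)\)-invariant, and it is non-empty by hypothesis. The one substantive point is finiteness: here I would invoke the fact that a compact complex surface with \(a(X)=0\) carries only finitely many irreducible curves (cf.~\cite{barth2004compact}*{IV.\ Theorem~8.2}), exactly as used in the proof of \cref{thm:enoki}. Hence \(\Sigma\) is a finite, non-empty, \(\Aut(X)\)-invariant set of curves on \(X\).

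With \(\Sigma\) in hand, \cref{lem:cpt_surface_with_inv_curves} immediately yields that \(\Aut(X)\) is T-Jordan, completing the argument. The only genuinely geometric ingredient is the finiteness of curves on a surface of algebraic dimension zero; everything else is formal. In particular, I expect no real obstacle here, since the delicate analysis of an \(\Aut(X)\)-invariant curve configuration — the split into the singular and the smooth cases, and the normal-bundle and projective-completion argument reducing to \cref{thm:class_c} — has already been carried out inside \cref{lem:cpt_surface_with_inv_curves}. The corollary is thus essentially a packaging of that lemma together with the curve-finiteness statement.
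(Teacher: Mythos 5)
Your argument is correct and is essentially identical to the paper's proof: both take $\Sigma$ to be the (automatically $\Aut(X)$-invariant) set of all irreducible curves on $X$, invoke \cite{barth2004compact}*{IV.\ Theorem~8.2} for finiteness since $a(X)=0$, and conclude via \cref{lem:cpt_surface_with_inv_curves}. The preliminary reduction to the smooth case is harmless but unnecessary, since the paper's standing convention is that compact complex surfaces are smooth.
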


\begin{proof}
	It follows from \cref{lem:cpt_surface_with_inv_curves},
	since \(X\) contains at most a finite number of (irreducible) curves
	(cf.~\cite{barth2004compact}*{IV.\ Theorem~8.2}).
\end{proof}

\begin{proposition}\label{prop:algebraic_dim_one_and_nonzero_euler_t_jordan}
	Let \(X\) be a compact complex surface with \(e(X)\neq 0\) and \(a(X)=1\).
	Then \(\Aut(X)\) is T-Jordan.
\end{proposition}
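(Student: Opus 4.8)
The plan is to produce a finite non-empty \(\Aut(X)\)-invariant set of curves on \(X\) and then quote \cref{lem:cpt_surface_with_inv_curves}. Since \(a(X)=1\), the algebraic reduction gives an \(\Aut(X)\)-equivariant elliptic fibration \(\pi\colon X\longrightarrow Y\) onto a smooth projective curve \(Y\), with general fibre a smooth elliptic curve (see the first paragraph of \cref{sec:equivariant_fibrations}). Let \(B\subseteq Y\) be the discriminant, i.e.\ the finite set of points over which the fibre is not a smooth elliptic curve; since \(\pi\) is equivariant and ``being a smooth elliptic curve'' is preserved by isomorphisms, \(\Aut(X)\) acts on \(Y\) preserving \(B\).

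The key step is to use \(e(X)\neq 0\) to ensure \(B\neq\emptyset\). A smooth elliptic curve \(F\) has \(e(F)=0\), and over \(Y\setminus B\) the proper submersion \(\pi\) is a \(C^\infty\) fibre bundle with fibre \(F\) by Ehresmann's theorem; the additivity and multiplicativity of the topological Euler characteristic then give
\[
	e(X)=e(F)\cdot e(Y\setminus B)+\sum_{b\in B}e(X_b)=\sum_{b\in B}e(X_b).
\]
As \(e(X)\neq 0\), some \(b\in B\) has \(e(X_b)\neq 0\); in particular \(B\) is non-empty.

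I would then let \(\Sigma\) be the set of all irreducible components of the fibres \(X_b\) with \(b\in B\). This is a finite non-empty collection of irreducible curves on \(X\), and it is \(\Aut(X)\)-invariant because \(\Aut(X)\) permutes the fibres over \(B\) and hence their components. Applying \cref{lem:cpt_surface_with_inv_curves} to \(\Sigma\) shows that \(\Aut(X)\) is T-Jordan.

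I expect the only delicate point to be the Euler-characteristic bookkeeping — in particular, noting that multiple smooth elliptic fibres lie in \(B\) but contribute \(0\) to the sum, so that the nonvanishing of \(e(X)\) genuinely forces a fibre with \(e(X_b)\neq 0\). Everything else is formal once such an invariant finite set of curves exists. By contrast, the alternative route through the short exact sequence \(1\to\Aut_Y(X)\to\Aut(X)\to\Gamma\to 1\) with \(\Gamma\leq\Aut(Y)\) — where \(\Aut_Y(X)\) is T-Jordan by \cref{prop:very_general_subset} and one would hope to apply \cref{lem:t_jordan_exact_sequence}(1) — stumbles on showing that \(\Gamma\) has bounded torsion: when \(Y\cong\mathbb{P}^1\) and \(B\) has at most two points, the stabiliser of \(B\) in \(\PGL_2(\mathbb{C})\) has unbounded torsion, so the invariant-curve approach is preferable.
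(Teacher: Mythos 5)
Your proof is correct and follows essentially the same route as the paper: produce the algebraic reduction, use \(e(X)\neq 0\) to force a fibre whose reduction is not a smooth elliptic curve, and feed the resulting finite \(\Aut(X)\)-invariant set of curves into \cref{lem:cpt_surface_with_inv_curves}. The only difference is that you derive the Euler-characteristic identity by hand via Ehresmann's theorem, whereas the paper simply cites Suzuki's formula (\cite{barth2004compact}*{\Romannum{3}, Proposition~11.4}) for the same fact.
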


\begin{proof}
	Let \(\pi\colon X\longrightarrow Y\) be the algebraic reduction (cf.~\cref{sec:equivariant_fibrations}),
	so that \(Y\) is a smooth curve and \(\pi\) is an \(\Aut(X)\)-equivariant elliptic fibration.
	Since \(e(X)\neq 0\),
	the fibration \(\pi\) has at least one fibre \(X_y\) such that
	\(F=(X_y)_{\mathrm{red}}\) is not a smooth elliptic curve by Suzuki's formula (cf.~\cite{barth2004compact}*{\Romannum{3}, Proposition~11.4}).
	Note that \(\pi\) has only finitely many singular fibres,
	whose union is \(\Aut(X)\)-invariant.
	Then the assertion follows from \cref{lem:cpt_surface_with_inv_curves}.
\end{proof}

\section{Hopf Surfaces}\label{sec:hopf_surfaces}

In this section, we study the automorphism groups of Hopf surfaces.
By definition, a \emph{Hopf surface} \(X\) is a smooth compact complex surface with universal covering
being isomorphic to \(\mathbb{C}^2\setminus \{0\}\).
Then, \(X\) can be obtained from \(\mathbb{C}^2\setminus \{0\}\) as a quotient by a free action
of a discrete group \(\Gamma\simeq \pi_1(X)\).
If \(\pi_1(X)\simeq \mathbb{Z}\), we call such \(X\) a \emph{primary} Hopf surface.
In this case, after an appropriate choice of coordinates of \(\mathbb{C}^2\),
the generator of \(\Gamma\) has the form
\begin{equation}\label{eq:primary_hopf_surface}
	(z,w) \longmapsto (az+\lambda w^m, bw),
\end{equation}
where \(m \in \mathbb{Z}_{>0}\) and \(a,b,\lambda\) are complex constants subject to the restrictions
\[
	(a - b^m)\lambda = 0, \quad 0 < \abs{a} \leq \abs{b} < 1.
\]
Any Hopf surface is either primary or a finite \'etale quotient of a primary Hopf surface,
where the latter is called the \emph{secondary} Hopf surface
(cf.~\cite{kodaira1966structure}*{Theorem~30}).

By above, \(\Gamma\) contains \(\Lambda\simeq \mathbb{Z}\) as a subgroup of finite index.
In particular, \(\Gamma\) is virtually solvable.
After replacing \(\Lambda\) by a suitable subgroup \(\Lambda_0\simeq \mathbb{Z}\),
we may assume that \(\Lambda\) is a characteristic subgroup of \(\Gamma\)
(cf.~\cite{prokhorov2021automorphism}*{Lemma~2.10}),
with the generator of \(\Lambda\) having the form \cref{eq:primary_hopf_surface}.
Now there is a short exact sequence
\begin{equation}\label{eq:hopf_ses}
	1\longrightarrow \Gamma\longrightarrow \widetilde{\Aut}(X)\longrightarrow \Aut(X)\longrightarrow 1
\end{equation}
where \(\widetilde{\Aut}(X)\) acts on \(\mathbb{C}^2\setminus \{0\}\) biholomorphically.
It follows from the Hartogs extension theorem that
\(\widetilde{\Aut}(X)\) can be extended to
biholomorphic actions on \(\mathbb{C}^2\)
fixing the origin \(0\in \mathbb{C}\).
By assumption, \(\Lambda\lhd \widetilde{\Aut}(X)\) is a normal subgroup.
% If \(\widetilde{\Aut}(X)\) contains a non-abelian free subgroup, then so does \(\Aut(X)\).

\begin{theorem}\label{thm:tits_alternative_hopf}
	Let \(X\) be a Hopf surface.
	Then the group \(\Aut(X)\) satisfies the Tits alternative.
\end{theorem}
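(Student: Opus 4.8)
The plan is to lift the problem along the exact sequence \cref{eq:hopf_ses} to the group $\widetilde{\Aut}(X)$ of germs at the origin and to reduce the Tits alternative for $\Aut(X)$ to that of $\widetilde{\Aut}(X)$. First I would record the descent step. Given a subgroup $G \le \Aut(X)$, let $\widetilde{G} \le \widetilde{\Aut}(X)$ be its preimage, so that $\widetilde G/\Gamma \simeq G$. If $\widetilde G$ is virtually solvable, then its quotient $G$ is virtually solvable as well. If instead $\widetilde G$ contains a non-abelian free subgroup $F$, then $F \cap \Gamma$ is normal in $F$ and lies in the virtually solvable group $\Gamma$; as a subgroup of a free group it is free, and as a virtually solvable free group it is at most infinite cyclic. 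Since a non-abelian free group has no non-trivial normal cyclic subgroup, $F \cap \Gamma = \{1\}$ and $F$ maps isomorphically onto a non-abelian free subgroup of $G$. Hence it suffices to prove that $\widetilde{\Aut}(X)$ satisfies the Tits alternative.

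Next I would show that $\widetilde{\Aut}(X)$ centralises the contraction $\gamma$. Since $\Lambda = \langle \gamma \rangle \simeq \mathbb Z$ is normal in $\widetilde{\Aut}(X)$, conjugation gives a homomorphism $\widetilde{\Aut}(X) \to \Aut(\Lambda) = \{\pm 1\}$, so $h\gamma h^{-1} = \gamma^{\pm 1}$ for each $h$. Taking derivatives at the fixed point $0$, the relation $h\gamma h^{-1} = \gamma^{-1}$ would force $d_0\gamma$ and $(d_0\gamma)^{-1}$ to be conjugate in $\GL_2(\mathbb C)$, hence to share eigenvalues $\{a,b\} = \{a^{-1}, b^{-1}\}$; this is impossible because $0 < |a| \le |b| < 1$. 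Therefore every $h$ commutes with $\gamma$, i.e.\ $\widetilde{\Aut}(X)$ is contained in the centraliser of the germ of $\gamma$ at $0$.

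Now consider the derivative representation $D \colon \widetilde{\Aut}(X) \to \GL_2(\mathbb C)$, $h \mapsto d_0 h$, which is defined because every element fixes $0$. This gives a short exact sequence
\[
	1 \longrightarrow K \longrightarrow \widetilde{\Aut}(X) \xrightarrow{\ D\ } D(\widetilde{\Aut}(X)) \longrightarrow 1 ,
\]
where $K$ consists of the germs tangent to the identity at $0$ that commute with $\gamma$. The image $D(\widetilde{\Aut}(X))$ is a linear group and thus satisfies the Tits alternative by Tits' theorem \cite{tits1972free}. For $K$, since both eigenvalues of $d_0\gamma$ have modulus $< 1$, the map $\gamma$ lies in the Poincar\'e domain and admits only finitely many resonances; solving the homological equation degree by degree, a germ tangent to the identity commuting with $\gamma$ is determined by a finite jet of bounded order. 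Consequently $K$ embeds faithfully into a unipotent group of jets and is therefore nilpotent, in particular virtually solvable and satisfying the Tits alternative. Applying \cref{lem:virtually_solvable_ses}~(4) to the displayed sequence shows that $\widetilde{\Aut}(X)$, and hence $\Aut(X)$, satisfies the Tits alternative.

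The main obstacle is the analysis of the kernel $K$: one must verify that the centraliser of the contraction among germs tangent to the identity is virtually solvable. This is precisely where the hyperbolicity of $\gamma$ (the Poincar\'e-domain condition $|a|,|b|<1$) is essential, as it guarantees finiteness of the resonant monomials and hence that $K$ is a nilpotent group of bounded-order jets rather than an infinite-dimensional object. I would emphasise that this structure genuinely requires Tits' theorem rather than mere virtual solvability of $\Aut(X)$: when $\gamma$ is a scalar contraction one has $\widetilde{\Aut}(X) = \GL_2(\mathbb C)$, whose quotient $\Aut(X)$ already contains non-abelian free subgroups.
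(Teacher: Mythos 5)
Your proposal is correct and arrives at the statement by the same overall skeleton as the paper --- lift along the sequence \cref{eq:hopf_ses}, establish the Tits alternative for \(\widetilde{\Aut}(X)\), and descend to the quotient --- but it differs in how the structure of \(\widetilde{\Aut}(X)\) is obtained. The paper simply cites the explicit descriptions of \(\widetilde{\Aut}(X)\) for primary Hopf surfaces (Namba, Wehler: a subgroup of \(\GL_2(\mathbb{C})\times\mathbb{C}\)) and for secondary ones (Matumoto--Nakagawa: either linear or \(\mathbb{C}\rtimes\mathbb{C}^{*}\), \(\mathbb{C}\rtimes(\mathbb{C}^{*})^2\)), and then invokes Tits' theorem together with \cref{lem:virtually_solvable_ses}~(4). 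You instead re-derive the needed structure from first principles: the normaliser of \(\Lambda\) must centralise the contraction \(\gamma\) (your eigenvalue argument ruling out \(\gamma\mapsto\gamma^{-1}\) is correct), the derivative representation has linear image handled by Tits' theorem, and the kernel \(K\) is nilpotent because \(\gamma\) lies in the Poincar\'e domain, so only finitely many resonant degrees occur and \(K\) injects into a finite-order jet group tangent to the identity. Your descent step (a non-abelian free subgroup of \(\widetilde G\) meets \(\Gamma\) trivially, since a normal subgroup of a non-abelian free group cannot be non-trivially cyclic) is also sound and is essentially the content of \cref{lem:virtually_solvable_ses}~(3)--(4). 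What your route buys is self-containedness and an explanation of \emph{why} the deck-transformation-commuting germs form a small group; what it costs is that the resonance analysis of \(K\) --- in particular the non-semisimple case \(a=b\), \(\lambda\neq 0\), where the homological operator must be inverted degree by degree in the presence of a nilpotent part --- is exactly the computation carried out in the references the paper cites, so in a written version you would either supply those details or end up citing the same sources. Your closing observation that genuine non-abelian free subgroups occur (e.g.\ for scalar contractions, where \(\Aut(X)\) is a quotient of \(\GL_2(\mathbb{C})\)) is a worthwhile sanity check that the paper does not make explicit.
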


\begin{proof}
	Assume first that \(X\) is a primary Hopf surface.
	Then we may identify \(\widetilde{\Aut}(X)\) with a subgroup of \(\GL_2(\mathbb{C})\times\mathbb{C}\)
	by \cite{namba1974automorphism}*{\S~2} and \cite{wehler1981versal}*{pp.~24}.
	Since \(\GL_2(\mathbb{C}) \times \mathbb{C}\) satisfies the Tits alternative,
	so do \(\widetilde{\Aut}(X)\) and \(\Aut(X)\) (cf.~\cref{lem:virtually_solvable_ses}~(4)).

	Now let \(X\) be a secondary Hopf surface.
	Then either \(\widetilde{\Aut}(X)\leq \GL_2(\mathbb{C})\),
	or \(\widetilde{\Aut}(X)\simeq \mathbb{C}\rtimes \mathbb{C}^{*}\) or
	\(\widetilde{\Aut}(X)\simeq \mathbb{C}\rtimes (\mathbb{C}^{*})^2\) (cf.~\cite{matumoto2000explicit}*{Theorem~1}).
	In the first case, \(\widetilde{\Aut}(X)\) and hence \(\Aut(X)\) satisfies the Tits alternative as \(\GL_2(\mathbb{C})\) does;
	% (cf.~\cite{drutu2018geometric}*{Theorem~15.23});
	in the latter two cases, \(\widetilde{\Aut}(X)\) and hence \(\Aut(X)\) are already solvable.
	In a word, \(\Aut(X)\) satisfies the Tits alternative.
\end{proof}

\begin{theorem}\label{thm:derived_length_hopf}
	Let \(X\) be a Hopf surface and \(G\leq \Aut(X)\) a virtually solvable subgroup.
	Then \(\ell_{\mathrm{vir}}(G)\leq 2\).
\end{theorem}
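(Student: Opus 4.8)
The plan is to lift the problem to the group \(\widetilde{\Aut}(X)\) acting biholomorphically on \(\mathbb{C}^2\setminus\{0\}\), where the explicit structure recalled in this section is available, and then to descend along the quotient map. Given a virtually solvable subgroup \(G\leq \Aut(X)\), I would take its preimage \(\widetilde{G}\leq \widetilde{\Aut}(X)\) under the short exact sequence \eqref{eq:hopf_ses}. Since \(\Gamma\) is virtually solvable (it contains \(\Lambda\simeq \mathbb{Z}\) of finite index) and \(G\) is virtually solvable by hypothesis, \cref{lem:virtually_solvable_ses}~(3) shows that \(\widetilde{G}\) is virtually solvable. Because \(G\) is a quotient of \(\widetilde{G}\), and passing to a quotient neither increases the derived length nor destroys finite index, one obtains \(\ell_{\mathrm{vir}}(G)\leq \ell_{\mathrm{vir}}(\widetilde{G})\); so it suffices to establish \(\ell_{\mathrm{vir}}(\widetilde{G})\leq 2\).

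I would first treat the primary case, where \(\widetilde{\Aut}(X)\leq \GL_2(\mathbb{C})\times\mathbb{C}\). Let \(p\) denote the projection onto the \(\GL_2(\mathbb{C})\) factor. The image \(p(\widetilde{G})\) is a virtually solvable subgroup of \(\GL_2(\mathbb{C})\), so by \cref{lem:derived_length_linear_gp} (with \(n=2\)) it has a finite-index subgroup \(P\) in triangular form, with \(\ell(P)\leq 2\). Replacing \(\widetilde{G}\) by the finite-index subgroup \(\widetilde{G}\cap p^{-1}(P)\), we may assume \(\widetilde{G}\leq P\times\mathbb{C}\). As the derived series of a direct product is computed factorwise, \((P\times\mathbb{C})^{(2)}=P^{(2)}\times\{0\}=\{1\}\), so \(\ell(P\times\mathbb{C})\leq 2\) and hence \(\ell(\widetilde{G})\leq 2\), giving \(\ell_{\mathrm{vir}}(\widetilde{G})\leq 2\).

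For the secondary case I would invoke \cite{matumoto2000explicit}*{Theorem~1}, as recalled in the proof of \cref{thm:tits_alternative_hopf}, which leaves three possibilities. If \(\widetilde{\Aut}(X)\leq \GL_2(\mathbb{C})\), then \cref{lem:derived_length_linear_gp} with \(n=2\) gives \(\ell_{\mathrm{vir}}(\widetilde{G})\leq 2\) at once. In the remaining cases \(\widetilde{\Aut}(X)\simeq \mathbb{C}\rtimes\mathbb{C}^{*}\) or \(\widetilde{\Aut}(X)\simeq \mathbb{C}\rtimes(\mathbb{C}^{*})^2\); in both, the acting torus is abelian and the normal factor \(\mathbb{C}\) is abelian, so every commutator has trivial torus component and hence lies in the translation part \(\mathbb{C}\). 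Thus \(\widetilde{\Aut}(X)\) is metabelian, every subgroup \(\widetilde{G}\) is solvable with \(\ell(\widetilde{G})\leq 2\), and the descent from the first paragraph finishes the proof.

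The computations are routine; the one point requiring care is the primary case. A careless application of \cref{lem:virtually_solvable_ses}~(1) to the extension \(1\to \widetilde{G}\cap(\{1\}\times\mathbb{C})\to \widetilde{G}\to P\to 1\) would only yield the bound \(\ell(N)+\ell_{\mathrm{vir}}(H)=1+2=3\). The key saving is to observe instead that \(\widetilde{G}\) embeds into the \emph{direct product} \(P\times\mathbb{C}\), whose derived length is the maximum \(\max(2,1)=2\) rather than the sum; this is exactly what keeps the final bound at \(2\).
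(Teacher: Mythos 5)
Your proof is correct and follows essentially the same route as the paper: lift to the preimage \(\widetilde{G}\leq\widetilde{\Aut}(X)\) via \cref{lem:virtually_solvable_ses}~(3), descend along the quotient, and bound \(\ell_{\mathrm{vir}}(\widetilde{\Aut}(X))\leq 2\) using the explicit descriptions from the proof of \cref{thm:tits_alternative_hopf} together with \cref{lem:derived_length_linear_gp}. Your explicit treatment of the primary case via the factorwise derived series of \(P\times\mathbb{C}\) is a worthwhile detail that the paper's terse citation of \cref{lem:derived_length_linear_gp} leaves implicit, since a naive embedding of \(\GL_2(\mathbb{C})\times\mathbb{C}\) into \(\GL_3(\mathbb{C})\) would only give the bound \(3\).
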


\begin{proof}
	Let \(\widetilde{G}\leq \widetilde{\Aut}(X)\) be the preimage of \(G\).
	Then \(\widetilde{G}\) is virtually solvable by \cref{lem:virtually_solvable_ses}~(3)
	as \(\Gamma\) is virtually solvable.
	Therefore, \(\ell_{\mathrm{vir}}(G)\leq \ell_{\mathrm{vir}}(\widetilde{G}) \leq \ell_{\mathrm{vir}}(\widetilde{\Aut}(X)) \leq 2\)
	by \cref{lem:derived_length_linear_gp} and proof of \cref{thm:tits_alternative_hopf}.
\end{proof}

The lemma below is a simple linear algebra, which is taken from \cite{prokhorov2021automorphism}*{Lemma~6.3}.

\begin{lemma}\label{lem:upper_triangular_centraliser}
	Let
	\[
		M=
		\begin{pmatrix}
			a & \lambda \\
			0 & b
		\end{pmatrix}
		\in \GL_2(\mathbb{C})
	\]
	be an upper triangular matrix,
	and \(Z\leq \GL_2(\mathbb{C})\) the centraliser of \(M\).
	Then the following assertions hold:
	\begin{enumerate}[label=(\roman*)]
		\item If \(a=b\) and \(\lambda=0\), then \(Z=\GL_2(\mathbb{C})\).
		\item If \(a\neq b\) and \(\lambda=0\), then \(Z\simeq (\mathbb{C}^{*})^2\).
		\item If \(a=b\) and \(\lambda\neq 0\), then \(Z\simeq \mathbb{C}^{*}\times\mathbb{C}^+\).
	\end{enumerate}
\end{lemma}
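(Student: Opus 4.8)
The plan is to compute the centraliser directly from the defining relation $MN = NM$. First I would write an arbitrary $N = \begin{pmatrix} x & y \\ z & w \end{pmatrix} \in \GL_2(\mathbb{C})$, expand both products $MN$ and $NM$, and compare entries. This should produce the linear system
\[
	\lambda z = 0, \qquad (a-b)z = 0, \qquad (a-b)y = \lambda(x-w),
\]
since the $(1,1)$- and $(2,2)$-entries each contribute $\lambda z = 0$, the $(2,1)$-entry contributes $(a-b)z = 0$, and the $(1,2)$-entry contributes the last equation. The three cases of the lemma are then read off as the solution sets of this system under the respective hypotheses on $a,b,\lambda$.

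Next I would dispatch the three cases in turn. In case (i) the matrix $M = aI$ is scalar and all three equations are vacuous, so $Z = \GL_2(\mathbb{C})$. In case (ii), since $a \neq b$ and $\lambda = 0$, the second and third equations force $y = z = 0$, leaving exactly the invertible diagonal matrices, whence $Z \simeq (\mathbb{C}^{*})^2$. In case (iii), since $\lambda \neq 0$ and $a = b$, the first equation gives $z = 0$ and the third gives $x = w$, so $Z$ consists of the invertible matrices of the form $\begin{pmatrix} x & y \\ 0 & x \end{pmatrix}$ (invertibility being equivalent to $x \neq 0$, as $\det = x^2$).

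The only step needing genuine care — and the nearest thing to an obstacle — is verifying the group-theoretic identification in case (iii). A priori a group of upper triangular matrices looks like a semidirect product, so I would make the isomorphism explicit: writing each element as $x\begin{pmatrix} 1 & t \\ 0 & 1 \end{pmatrix}$ with $t = y/x$ and checking that the scalar factor $\mathbb{C}^{*}$ and the unipotent factor $\mathbb{C}^{+}$ commute shows that $(x,t) \mapsto x\begin{pmatrix} 1 & t \\ 0 & 1 \end{pmatrix}$ is an isomorphism $\mathbb{C}^{*}\times\mathbb{C}^{+} \xrightarrow{\sim} Z$. This confirms $Z \simeq \mathbb{C}^{*}\times\mathbb{C}^{+}$ and completes the lemma. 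Everything here is routine $2 \times 2$ linear algebra, so I do not expect any deeper difficulty.
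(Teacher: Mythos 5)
Your proof is correct: the commutation relation $MN=NM$ yields exactly the system $\lambda z=0$, $(a-b)z=0$, $(a-b)y=\lambda(x-w)$, and the three cases follow as you describe, including the explicit isomorphism $(x,t)\mapsto x\left(\begin{smallmatrix}1 & t\\ 0 & 1\end{smallmatrix}\right)$ in case (iii). The paper itself gives no proof, merely citing Prokhorov--Shramov (Lemma~6.3 of \emph{Automorphism groups of compact complex surfaces}), so there is nothing to compare against; your direct computation is the standard argument one would expect there.
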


\begin{theorem}\label{thm:t_jordan_hopf}
	Let \(X\) be a Hopf surface.
	Then \(\Aut(X)\) is T-Jordan.
\end{theorem}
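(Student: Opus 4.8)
The plan is to analyse the central extension \eqref{eq:hopf_ses} by pushing a torsion subgroup of \(\Aut(X)\) up to \(\widetilde{\Aut}(X)\) and then back down into \(\GL_2(\mathbb{C})\) via the derivative at the common fixed point \(0\in\mathbb{C}^2\). First I would record that \(\Lambda=\langle\gamma_0\rangle\simeq\mathbb{Z}\) is \emph{central} in \(\widetilde{\Aut}(X)\): since \(\Lambda\) is normal, conjugation sends \(\gamma_0\) to \(\gamma_0^{\pm1}\), but the eigenvalues of \(\gamma_0\) at \(0\) satisfy \(0<\abs{a}\le\abs{b}<1\), so \(\gamma_0\) and \(\gamma_0^{-1}\) are not conjugate and the sign must be \(+\). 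Now let \(G\le\Aut(X)\) be a torsion subgroup and \(\widetilde G\le\widetilde{\Aut}(X)\) its preimage, sitting in \(1\to\Gamma\to\widetilde G\to G\to1\). Because \(\Gamma/\Lambda\) is finite and \(G\) is torsion, \(\widetilde G/\Lambda\) is an extension of a torsion group by a finite group, hence torsion. Writing \(\sigma\colon\widetilde G\to\GL(T_{0}\mathbb{C}^{2})=\GL_2(\mathbb{C})\) for the derivative representation at \(0\), the matrix \(\sigma(\gamma_0)\) has eigenvalues of modulus \(<1\), so it has infinite order and \(\sigma|_{\Lambda}\) is injective. Applying \cref{lem:torsion_quotient_embedding} with the normal subgroup \(\Lambda\lhd\widetilde G\) then shows that \(\sigma\colon\widetilde G\hookrightarrow\GL_2(\mathbb{C})\) is an \emph{embedding}. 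This is the device that replaces the (non-torsion) group \(\widetilde G\) by a genuine linear group.

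Since \(\gamma_0\) is central, \(\sigma(\widetilde G)\) lies in the centraliser of \(\sigma(\gamma_0)\) in \(\GL_2(\mathbb{C})\). After conjugating \(\sigma(\gamma_0)\) into upper-triangular form, \cref{lem:upper_triangular_centraliser} splits the argument into two cases. If \(\sigma(\gamma_0)\) is \emph{not} scalar (cases (ii), (iii) of that lemma), the centraliser is \((\mathbb{C}^{*})^{2}\) or \(\mathbb{C}^{*}\times\mathbb{C}^{+}\), both abelian; hence \(\widetilde G\) is abelian and so is its quotient \(G=\widetilde G/\Gamma\), giving the bound with constant \(1\).

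The main obstacle is the \emph{scalar} case \(\sigma(\gamma_0)=aI\) (with \(\abs{a}<1\)), where the centraliser is all of \(\GL_2(\mathbb{C})\) and imposes no constraint. Here I would project to \(\PGL_2(\mathbb{C})\) and exploit that \(G\) is torsion. Let \(\overline G\) denote the image of \(\widetilde G\) in \(\PGL_2(\mathbb{C})\). Because \(\sigma(\gamma_0)\) is scalar, \(\Lambda\) dies in \(\PGL_2(\mathbb{C})\), so the image \(\overline\Gamma\) of \(\Gamma\) is finite; since every element of \(\widetilde G\) has a power in \(\Gamma\) (as \(G\) is torsion), its image in \(\PGL_2(\mathbb{C})\) has a power in the finite group \(\overline\Gamma\), whence \(\overline G\) is torsion. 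As \(\PGL_2(\mathbb{C})\) is a connected Lie group, \cref{thm:connected_lie_group_t_jordan} provides an abelian subgroup \(\overline G_1\le\overline G\) of index \(\le J(\PGL_2(\mathbb{C}))\). A nontrivial element of \(\overline G_1\) is semisimple and so has two fixed points on \(\mathbb{P}^{1}\), which every commuting element must permute; thus the subgroup \(\overline G_1'\le\overline G_1\) fixing both (of index \(\le2\)) lies in a maximal torus \(\overline T\). Its preimage in \(\widetilde G\le\GL_2(\mathbb{C})\) lies in the diagonal torus \(T\simeq(\mathbb{C}^{*})^{2}\), hence is abelian, and therefore maps to an abelian subgroup of \(G\) of index \([\,\overline G:\overline G_1'\,]\le 2\,J(\PGL_2(\mathbb{C}))\). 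Taking the maximum of the constants from the two cases yields a single \(J=J(\Aut(X))\), proving that \(\Aut(X)\) is T-Jordan. The delicate point throughout is that torsion does not lift from \(G\) to \(\widetilde G\); it is precisely the passage to \(\PGL_2(\mathbb{C})\) (rather than \(\GL_2(\mathbb{C})\)) that restores a torsion group to which \cref{thm:connected_lie_group_t_jordan} applies, while the torus refinement guarantees the lifted subgroup is genuinely abelian rather than merely nilpotent.
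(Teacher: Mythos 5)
Your proposal is correct and its overall architecture coincides with the paper's: lift a torsion subgroup \(G\) to its preimage \(\widetilde G\le\widetilde{\Aut}(X)\), embed \(\widetilde G\) into \(\GL_2(\mathbb{C})\) by the derivative at the origin via \cref{lem:torsion_quotient_embedding} (using that \(\sigma|_{\Lambda}\) is injective and \(\widetilde G/\Lambda\) is torsion), constrain the image by the centraliser of \(\sigma(\gamma_0)\), and split into cases according to \cref{lem:upper_triangular_centraliser}. You diverge at two points, both legitimately. First, you prove outright that \(\Lambda\) is central in \(\widetilde{\Aut}(X)\) by comparing the eigenvalue moduli of \(\sigma(\gamma_0)\) and \(\sigma(\gamma_0)^{-1}\); the paper instead passes from the normaliser \(\Omega\) of \(\sigma(\Lambda)\) to its index-\(\le 2\) subgroup \(\Omega'\) centralising \(M\). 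Your observation is correct and slightly cleaner. Second, in the scalar case \(\sigma(\gamma_0)=aI\) the paper applies \cref{thm:connected_lie_group_t_jordan} to the connected Lie group \(\GL_2(\mathbb{C})/\sigma(\Lambda)\) and then descends through the finite quotient by \(\sigma(\Gamma)/\sigma(\Lambda)\) using \cref{lem:t_jordan_exact_sequence}; you instead apply it to \(\PGL_2(\mathbb{C})\) and restore genuine abelianness by hand via the maximal-torus argument (commuting elements permute the two fixed points of a nontrivial semisimple element, and the pointwise stabiliser lifts into a torus of \(\GL_2(\mathbb{C})\)). This costs a factor of \(2\) in the constant but avoids the quotient lemma; both routes are valid. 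The only detail worth a sentence is the degenerate subcase where the abelian subgroup \(\overline G_1\) furnished by Lee's theorem is trivial, so there is no nontrivial element whose fixed points you can use; there \(\overline G\) itself has order at most \(J(\PGL_2(\mathbb{C}))\), and \(\sigma^{-1}(\mathbb{C}^{*}I)\cap\widetilde G\) is already a central, hence abelian, subgroup of the required index, so the conclusion still holds.
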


\begin{proof}
	The proof is adapted from \cite{prokhorov2021automorphism}*{Lemma~6.4}.

	Consider the short exact sequence \cref{eq:hopf_ses}.
	The image of the generator of \(\Lambda\) is mapped by the natural homomorphism
	\[
		\sigma\colon \widetilde{\Aut}(X)\longrightarrow \GL(T_{0,\mathbb{C}^2})\simeq \GL_2(\mathbb{C})
	\]
	to the matrix
	\[
		M=
		\begin{pmatrix}
			a & \lambda \delta_1^m \\
			0 & b
		\end{pmatrix}
	\]
	where \(\delta\) is the Kronecker symbol.

	Let \(G\leq \Aut(X)\) be a torsion subgroup,
	and \(\widetilde{G}\) its preimage in \(\widetilde{\Aut}(X)\).
	Thus, one has \(G\simeq \widetilde{G}/\Gamma\).
	By~\cref{lem:torsion_quotient_embedding}, \(\sigma|_{\widetilde{G}}\) is an embedding
	since \(\sigma_{\Lambda}\) is a group monomorphism and \(\widetilde{G}/\Lambda\) is torsion.
	Let \(\Omega\) be the normaliser of \(\sigma(\Lambda)\) in \(\GL_2(\mathbb{C})\).
	By construction \(\sigma(\widetilde{G})\) is contained in the normaliser of
	\(\sigma(\Gamma)\) in \(\GL_2(\mathbb{C})\),
	which in turn is contained in \(\Omega\)
	because \(\Lambda\) is a characteristic subgroup of \(\Gamma\).
	Hence, every torsion subgroup of \(\Aut(X)\) is contained in the group \(\Omega/\sigma(\Gamma)\).
	On the other hand,
	\(\Omega/\sigma(\Gamma)\) is a quotient of \(\Omega/\sigma(\Lambda)\) by a finite subgroup
	isomorphic to \(\sigma(\Gamma)/\sigma(\Lambda)\).
	Thus, by~\cref{lem:t_jordan_exact_sequence},
	it is sufficient to show that the group \(\Omega/\sigma(\Lambda)\) is T-Jordan.

	Since \(\sigma(\Lambda)\simeq \mathbb{Z}\),
	the group \(\Omega\) has a (normal) subgroup \(\Omega'\) of index at most \(2\)
	that coincides with the centraliser of the matrix \(M\).
	% normaliser takes \(1\) to \(\pm 1\), and centraliser takes \(1\) to \(1\)
	It remains to check that the group \(\Omega'/\sigma(\Lambda)\) is T-Jordan.
	If \(\lambda=0\) and \(a=b\),
	it follows from \cref{lem:upper_triangular_centraliser}(\romannum{1}) that \(\Omega'/\sigma(\Lambda)\)
	is a connected Lie group and hence T-Jordan by \cref{thm:connected_lie_group_t_jordan}.
	If either \(\lambda=0\) and \(a\neq b\),
	or \(\lambda\neq 0\) and \(m\geq 2\),
	then this follows from \cref{lem:upper_triangular_centraliser}(\romannum{2})
	that \(\Omega'/\sigma(\Lambda)\) is abelian.
	If \(\lambda\neq 0\) and \(m=1\),
	then this follows from \cref{lem:upper_triangular_centraliser}(\romannum{3})
	that \(\Omega'/\sigma(\Lambda)\) is abelian.
\end{proof}

\section{Inoue Surfaces}\label{sec:inoue_surfaces}

An \emph{Inoue surface} \(X\) is a compact complex surface obtained from \(W \coloneqq \mathbb{H}\times\mathbb{C}\)
as a quotient by an infinite discrete group, where \(\mathbb{H}\) is the upper half complex plane.
Inoue surfaces are minimal surfaces in class
\(\mathrm{VII}\), contain no curve, and have the following numerical invariants:
\[
	a(X) = 0,\quad b_1(X) = 1,\quad b_2(X) = 0.
\]
There are three families of Inoue surfaces: \(S_M\), \(S^{(+)}\), and \(S^{(-)}\)
(cf.~\cite{inoue1974surfaces}), and we will study their automorphisms separately.

Since every holomorphic map from \(\mathbb{C}\) to \(\mathbb{H}\) is constant,
any automorphism \(u\) of \(W\) has the form
\begin{equation}\label{eq:u}
	u(w,z)=(s(w),t(w,z))
\end{equation}
where
\[
	s(w)=\frac{aw+b}{cw+d}, \quad a,b,c,d \in \mathbb{R} \text{ and } ad-bc>0.
\]
We may expand \(t\) in the power series of \(w\) and \(z\) at \((\sqrt{-1},0)\):
\[
	t(w,z)=\sum_{p \geq 0} C_{p,0}(w-\sqrt{-1})^p + C_{p,1}(w-\sqrt{-1})^p z.
\]

\subsection{Type \texorpdfstring{\(S_M\)}{SM}}\label{sub:inoue_sm}

Let \(M = (m_{i,j})\in \SL_3(\mathbb{Z})\) be a matrix with eigenvalues
\(\alpha,\beta,\overline{\beta}\) such that \(\alpha>1\) and \(\beta\neq \overline{\beta}\).
Take \((a_1,a_2,a_3)^T\) to be a real eigenvector of \(M\) corresponding to \(\alpha\),
and \((b_1,b_2,b_3)^T\) an eigenvector corresponding to \(\beta\).
Let \(G_M\) be the group of automorphisms of \(W\) generated by
\begin{align*}
	g_0(w,z)   & = (\alpha w,\beta z),                    \\
	g_{i}(w,z) & = (w + a_{i}, z + b_{i}), \quad i=1,2,3,
\end{align*}
which satisfy these conditions
\begin{gather*}
	g_0 g_{i} g_0^{-1} = g_1^{m_{i,1}}g_2^{m_{i,2}}g_3^{m_{i,3}}, \\
	g_{i}g_{j} = g_{j}g_{i}, \quad i,j=1,2,3.
\end{gather*}
It can be shown that the action of \(G_M\) on \(W\) is free and properly discontinuous.
The quotient \(X\coloneqq W / G_M\) is an Inoue surface of type \(S_M\).
Thus, there is a short exact sequence of groups
\[
	1\longrightarrow G_M\longrightarrow \widetilde{\Aut}(X)\longrightarrow \Aut(X)\longrightarrow 1
\]
where \(\widetilde{\Aut}(X)\) acts biholomorphically on \(W\), which is the normaliser of \(G_M\) in \(\Aut(W)\).

We will study \(\widetilde{\Aut}(X)\) in detail.
Assume that \(u\in \widetilde{\Aut}(X)\).
Since \(G_M\lhd \widetilde{\Aut}(X)\), we have \(ugu^{-1}\in G_M\) for all \(g\in G_M\).
Indeed, we only need to verify this for the generators of \(G_M\).

By the commutative relations of \(G_M\), we may assume that
\begin{equation}\label{eq:inoue_sm_main}
	ug_{i}u^{-1}=g_1^{n_{i,1}}g_2^{n_{i,2}}g_3^{n_{i,3}}g_0^{k_i}, \quad 0 \leq i\leq 3
\end{equation}
for some \(k_i \in \mathbb{Z}\), \(\mathbf{n} = (n_{0,1}, n_{0,2}, n_{0,3}) \in \mathbb{Z}^3\)
and \(N\coloneqq (n_{i,j})_{1\leq i,j\leq 3} \in \Mat_{3\times 3}(\mathbb{Z})\).
Then
\begin{equation*}
	g_1^{n_{i,1}}g_2^{n_{i,2}}g_3^{n_{i,3}}g_0^{k_i} u(w,z)
	=(\alpha^{k_i} s(w)+\sum_{j=1}^{3}n_{i,j}a_{j},
	\beta^{k_i} t(w,z)+\sum_{j=1}^{3}n_{i,j}b_{j}).
\end{equation*}
Note that
\begin{align*}
	ug_0(w,z) & =u(\alpha w, \beta z)=(s(\alpha w),t(\alpha w,\beta z)), \\
	ug_i(w,z) & =u(w + a_i, z + b_i)=(s(w + a_i),t(z + b_i)).
\end{align*}
Then, \cref{eq:inoue_sm_main} implies
\begin{align}
	s(\alpha w)          & = \alpha^{k_0} s(w) + \sum_j n_{0,j} a_j, \label{eq:sm_g0_s} \\
	t(\alpha w, \beta z) & = \beta^{k_0} t(w,z)+\sum_j n_{0,j}b_j, \label{eq:sm_g0_t}   \\
	s(w + a_i)           & = \alpha^{k_i} s(w) + \sum_j n_{i,j} a_j, \label{eq:sm_gi_s} \\
	t(z + b_i)           & = \beta^{k_i} t(w,z)+\sum_j n_{i,j}b_j. \label{eq:sm_gi_t}
\end{align}

By \cref{eq:sm_g0_s}, we have
\begin{equation}\label{eq:sm_g0_s_expand}
	\begin{cases}
		ac(1 - \alpha^{k_0}) = c^2 \sum_j n_{0,j} a_j,                                            \\
		ad(\alpha - \alpha^{k_0}) + bc(1 - \alpha^{k_0 + 1}) = cd (1 + \alpha) \sum_j n_{0,j}a_j, \\
		bd(1 - \alpha^{k_0}) = d^2 \sum_j n_{0,j} a_j.
	\end{cases}
\end{equation}
If \(cd \neq 0\), then \(a(1 - \alpha^{k_0}) = c \sum_j n_{0,j} a_j\) and \(b(1 - \alpha^{k_0}) = d \sum_j n_{0,j} a_j\)
by the first and the third equalities of \cref{eq:sm_g0_s_expand}.
Using the fact that \(ad - bc > 0\),
the second equality of \cref{eq:sm_g0_s_expand} implies \(k_0 = 0\) and \(\sum_j n_{0,j} a_j = 0\),
which contradicts the middle equality above.
Therefore, either \(c = 0\) or \(d = 0\).

From \cref{eq:sm_gi_s} one deduces that
\begin{equation}\label{eq:sm_gi_s_expand}
	\begin{cases}
		ac(1 - \alpha^{k_i}) = c^2 \sum_j n_{i,j} a_j,                                                            \\
		(aca_i + ad + bc) (1 - \alpha^{k_i}) = (c^2 a_i + 2cd) \sum_j n_{i,j}a_j, \hspace{4em} \text{for all } i. \\
		(ad - bc\alpha^{k_i}) a_i + bd(1 - \alpha^{k_i}) = (cd a_i + d^2) \sum_j n_{i,j} a_j,
	\end{cases}
\end{equation}
If \(d = 0\), then \(bc\alpha^{k_i} a_i=0\) for all \(i\) by the last equality of \cref{eq:sm_gi_s_expand}.
Since \(bc \neq 0\) now and \(\alpha > 1\),
the third equality of \cref{eq:sm_gi_s_expand} implies \(a_i=0\) for all \(i\),
which contradicts the assumption that \((a_1,a_2,a_3)\) is an eigenvector of \(M\).
Hence, \(c=0\) and we may rewrite \(s\) as \(s(w) = aw + b\) with \(a>0\) and \(b \in \mathbb{R}\) (i.e., \(d = 1\)).
Then, by \cref{eq:sm_g0_s_expand}, \(k_0 = 1\) and \((1-\alpha)b = \sum_j n_{0,j}a_j\) and
\cref{eq:sm_gi_s_expand} gives \(k_i = 0\) and \(a a_i = \sum_j n_{i,j} a_j\).

Comparing the coefficients of \(z\) in \cref{eq:sm_g0_t}, we have
\begin{align}
	\sum_p C_{p,0} (\alpha w - \sqrt{-1})^p    & =
	\beta \sum_p C_{p,0} (w - \sqrt{-1})^p + \sum_j n_{0,j} b_j, \label{eq:sm_g0_t_constant}     \\
	\beta \sum_p C_{p,1}(\alpha w-\sqrt{-1})^p & =\beta \sum_p C_{p,1}(w-\sqrt{-1})^p. \nonumber
\end{align}
It follows that \(C_{p,0} = C_{p,1} = 0\) for all \(p\geq 1\).
Thus, we may rewrite \(t\) as \(t(w,z) = Az + B\) for some \(A\neq 0, B \in \mathbb{C}\).
Then \cref{eq:sm_g0_t_constant} becomes \((1-\beta)B = \sum_j n_{0,j}b_j\),
and \cref{eq:sm_gi_t} gives \(Ab_i = \sum_j n_{i,j}b_j\).

We conclude that \(u(w,z) = (aw+b,Az+B)\) where \(a,b,A,B\) satisfy
\[
	\mathbf{n} \cdot
	\begin{pmatrix}
		a_1 & b_1 \\
		a_2 & b_2 \\
		a_3 & b_3
	\end{pmatrix}
	=
	\begin{pmatrix}
		(1-\alpha)b & (1-\beta)B
	\end{pmatrix}
\]
and
\begin{equation}\label{eq:inoue_sm_n}
	N
	\begin{pmatrix}
		a_1 \\
		a_2 \\
		a_3
	\end{pmatrix}
	=a
	\begin{pmatrix}
		a_1 \\
		a_2 \\
		a_3
	\end{pmatrix},
	\quad N
	\begin{pmatrix}
		b_1 \\
		b_2 \\
		b_3
	\end{pmatrix}
	=A
	\begin{pmatrix}
		b_1 \\
		b_2 \\
		b_3
	\end{pmatrix}.
\end{equation}
It follows that \(M\) and \(N\) are simultaneously diagonalisable.
So \(M\) and \(N\) commute.
Moreover, the matrix associated to \(u^{-1}\) is \(N^{-1} \in \Mat_{3\times 3}(\mathbb{Z})\),
so \(\det N=\pm 1\) and \(N\in \GL_3(\mathbb{Z})\).

Consider the following subgroups of \(\widetilde{\Aut}(X)\):
\[
	K = \Big\{(w,z) \longmapsto
	\Big(w+\frac{1}{1-\alpha}\sum_{j=1}^{3}n_{i}a_{i},z+\frac{1}{1-\beta}\sum_{j=1}^{3}n_{i}b_{i}\Big)
	\mid n_i \in \mathbb{Z}\Big\} \simeq \mathbb{Z}^3
\]
and
\[
	\Gamma\coloneqq \{N\in \GL_3(\mathbb{Z}) \mid N \text{ and } M \text{ are simultaneously diagonalisable}\},
\]
which are abelian groups.
By the construction, \(\widetilde{\Aut}(X) = K \rtimes \Gamma\).
Note also that \(G_M = G_1 \rtimes G_0\) where
\(G_1 = \{g_1^{n_1}g_2^{n_2}g_3^{n_3} \mid n_i \in \mathbb{Z}\} \simeq \mathbb{Z}^3\)
and \(G_0 = \langle g_0 \rangle \simeq \mathbb{Z}\).
Now we have the following commutative diagram (by the snake lemma)
\begin{equation}\label{eq:sm_comm_diag}
	\xymatrix{
	{} & 1 \ar[d] & 1 \ar[d] & 1 \ar[d] & {} \\
	1 \ar[r] & G_1 \ar[r]^{\phi} \ar[d] & K \ar[r] \ar[d]^{\iota} & F \ar[r] \ar[d] & 1 \\
	1 \ar[r] & G_M \ar[r] \ar[d] & \widetilde{\Aut}(X) \ar[r] \ar[d] & \Aut(X) \ar[r] \ar[d] & 1 \\
	1 \ar[r] & G_0 \ar[r] \ar[d] & \Gamma \ar[r] \ar[d] & \Gamma/G_0 \ar[r] \ar[d] & 1 \\
	{} & 1 & 1 & 1 & {}
	}
\end{equation}
where \(\phi\) is given by
\[
	\phi(\mathbf{n})=\mathbf{n}\cdot (I-M) \quad \text{with } \mathbf{n}=(n_1,n_2,n_3).
\]
Since \(I - M\) is invertible, the group \(F\) being the cokernel of \(\phi\), is finite (and abelian).
Then, by the last column of the commutative diagram, \(\Aut(X)\) is metabelian.
By \cref{eq:sm_comm_diag,lem:virtually_solvable_ses} (1), we have:

\begin{theorem}\label{thm:tits_altervative_derived_length_inoue_sm}
	Let \(X\) be an Inoue surface of type \(S_M\).
	Then \(\Aut(X)\) is solvable.
	Moreover, for any \(G\leq \Aut(X)\), one has \(\ell_{\mathrm{vir}}(G)\leq 2\).
\end{theorem}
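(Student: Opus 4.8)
The plan is to extract both conclusions directly from the rightmost column of the commutative diagram \cref{eq:sm_comm_diag}, which is the short exact sequence
\[
	1\longrightarrow F\longrightarrow \Aut(X)\longrightarrow \Gamma/G_0\longrightarrow 1.
\]
First I would recall the nature of the two extreme terms. The kernel \(F\) is the cokernel of the map \(\phi(\mathbf{n})=\mathbf{n}\cdot(I-M)\); since \(\alpha>1\) forces \(\abs{\beta}^2=\alpha\beta\overline{\beta}/\alpha=1/\alpha<1\), none of the eigenvalues \(\alpha,\beta,\overline{\beta}\) of \(M\) equals \(1\), so \(I-M\) is invertible and \(F\) is a finite (abelian) group. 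The quotient \(\Gamma/G_0\) is a quotient of the abelian group \(\Gamma\), hence is again abelian.

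Having both the kernel \(F\) and the quotient \(\Gamma/G_0\) abelian, I would conclude that \(\Aut(X)\) is abelian-by-abelian, i.e.\ metabelian. Concretely, applying \cref{lem:virtually_solvable_ses}~(1) to the sequence above with \(N=F\) and \(H=\Gamma/G_0\) shows that \(\Aut(X)\) is solvable with
\[
	\ell(\Aut(X))\leq \ell(F)+\ell(\Gamma/G_0)\leq 1+1=2.
\]
This settles the solvability assertion and in fact yields the sharper statement \(\Aut(X)^{(2)}=\{1\}\).

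For the \emph{moreover} part, any subgroup \(G\leq \Aut(X)\) inherits the derived-length bound: since \(G^{(2)}\leq \Aut(X)^{(2)}=\{1\}\), the group \(G\) is solvable with \(\ell(G)\leq 2\), whence \(\ell_{\mathrm{vir}}(G)\leq \ell(G)\leq 2\). I do not expect a genuine obstacle at this stage, because the substantive work has already been carried out in the structural analysis preceding the statement—namely the identification \(\widetilde{\Aut}(X)=K\rtimes\Gamma\), the verification via \cref{eq:inoue_sm_n} that \(N\) and \(M\) commute (so that \(\Gamma\) is abelian), and the finiteness of \(F\). The only point requiring care is to confirm that the two extreme terms of the rightmost column are honestly abelian, which follows from the explicit descriptions of \(K\), \(\Gamma\), and \(G_0\) recorded before the diagram.
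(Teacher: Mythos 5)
Your proposal is correct and follows essentially the same route as the paper: the paper likewise reads off from the rightmost column of \cref{eq:sm_comm_diag} that \(\Aut(X)\) is an extension of the abelian group \(\Gamma/G_0\) by the finite abelian group \(F\), hence metabelian, and then bounds derived lengths of subgroups via \cref{lem:virtually_solvable_ses}~(1). Your explicit verification that \(I-M\) is invertible (no eigenvalue of \(M\) equals \(1\) since \(\alpha>1\) and \(\abs{\beta}^2=1/\alpha<1\)) is a detail the paper asserts without proof, but it is the same argument.
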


Next, we claim that any torsion subgroup of \(\Gamma/\mathbb{Z}\) is finite.
Let \(n_0\) be the largest integer such that,
for some \(n_0\)-th root of \(M\), \(M^{\frac{1}{n_0}}\in \GL_3(\mathbb{Z})\).
% converges to the identity,
% take a compact neighbourhood of id,
% SL(Z) intersects with cpt set -> finite
Let \(N\in \Gamma\) be an element such that its image in \(\Gamma/\mathbb{Z}\) is torsion.
Then there is a pair of coprime integers \((m,n)\) such that
\(N^n=M^m\) and hence \(M^\frac{m}{n}\in \GL_3(\mathbb{Z})\).
Since \(m\) and \(n\) are coprime, there are integers \(a\) and \(b\) such that \(am+bn=1\).
Then
\[
	\frac{1}{n}=a\frac{m}{n}+b.
\]
It follows that \(M^{\frac{1}{n}}\in \GL_3(\mathbb{Z})\) and hence \(n\leq n_0\).
Therefore,
\[
	\{(m,n)\mid M^{\frac{m}{n}}\in \GL_3(\mathbb{Z}), m,n \text{ coprime, } m<n\}
\]
is a finite set.
It follows that the set of torsion elements of \(\Gamma/\mathbb{Z}\) is finite (with the bound depending on \(X\)), which proves the claim.
This claim and \cref{eq:sm_comm_diag} imply:

\begin{theorem}\label{thm:t_jordan_inoue_sm}
	Let \(X\) be an Inoue surface of type \(S_M\).
	Then any torsion subgroup \(G\leq \Aut(X)\) is finite.
	In particular, \(\Aut(X)\) is T-Jordan.
\end{theorem}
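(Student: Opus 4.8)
The plan is to read the conclusion off the rightmost column of the commutative diagram \eqref{eq:sm_comm_diag}, namely the short exact sequence
\[
	1 \longrightarrow F \longrightarrow \Aut(X) \longrightarrow \Gamma/G_0 \longrightarrow 1,
\]
in which $F$ is finite and $\Gamma/G_0 = \Gamma/\mathbb{Z}$. The crucial input is the claim just established above, that the set of torsion elements of $\Gamma/\mathbb{Z}$ is finite, say of cardinality $T_1 = T_1(X)$.

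Given a torsion subgroup $G \leq \Aut(X)$, I would first push it forward along the projection $\Aut(X) \to \Gamma/\mathbb{Z}$. As $G$ is a torsion group, its image $\overline{G}$ consists of torsion elements, so $\abs{\overline{G}} \leq T_1$ by the claim; meanwhile the kernel of $G \to \overline{G}$ is $G \cap F$, a subgroup of the finite group $F$, whence $\abs{G \cap F} \leq \abs{F}$. Combining these through the exact sequence $1 \to G \cap F \to G \to \overline{G} \to 1$ gives $\abs{G} \leq \abs{F} \cdot T_1$. Thus every torsion subgroup is finite, with order bounded by the constant $\abs{F}\cdot T_1$ independent of $G$; in other words, $\Aut(X)$ has bounded torsion subgroups.

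For the T-Jordan assertion I would then observe that a group with bounded torsion subgroups is automatically T-Jordan: every torsion subgroup $H$ contains the trivial (abelian) subgroup of index $\abs{H} \leq \abs{F}\cdot T_1$, so one may take $J(\Aut(X)) = \abs{F}\cdot T_1$. Equivalently, this is a formal consequence of \cref{lem:t_jordan_exact_sequence}(1) applied to the displayed column, since $F$ is finite (hence T-Jordan) and $\Gamma/\mathbb{Z}$ has bounded torsion subgroups. As all the structural work has already been done---the decomposition $\widetilde{\Aut}(X) = K \rtimes \Gamma$, the diagram \eqref{eq:sm_comm_diag}, and the finiteness of the torsion elements of $\Gamma/\mathbb{Z}$---I foresee no serious obstacle; the only point needing care is to convert ``finitely many torsion elements'' into a uniform bound on the orders of torsion subgroups, which is precisely what powers both the finiteness and the T-Jordan conclusions.
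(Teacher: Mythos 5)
Your proposal is correct and follows essentially the same route as the paper: the paper likewise deduces the theorem from the finiteness of the set of torsion elements of \(\Gamma/\mathbb{Z}\) together with the rightmost column of \eqref{eq:sm_comm_diag}, whose kernel term \(F\) is finite. Your write-up merely makes explicit the bookkeeping (image bounded by \(T_1\), kernel bounded by \(\abs{F}\), hence \(\abs{G}\le\abs{F}\cdot T_1\)) that the paper leaves implicit.
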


\subsection{Types \texorpdfstring{\(S^{(+)}\)}{S+} and \texorpdfstring{\(S^{(-)}\)}{S-}}\label{sub:inoue_spm}

Since the constructions of Inoue surfaces of type \(S^{(+)}\) and \(S^{(-)}\) are almost parallel,
we will only focus on type \(S^{(+)}\) in this subsection.
The same argument works for Inoue surfaces of type \(S^{(-)}\).

Let \(M\in \SL_2(\mathbb{Z})\) be a matrix with two real eigenvalues \(\alpha\) and \(1/\alpha\) with \(\alpha>1\).
Let \((a_1, a_2)^T\) and \((b_1, b_2)^T\) be real eigenvectors of \(M\)
corresponding to \(\alpha\) and \(1/\alpha\), respectively,
and fix integers \(p_1,p_2,r\,(r\neq 0)\) and a complex number \(\tau\).
Define \((c_1, c_2)^T\) to be the solution of the following equation
\[
	(I-M)
	\begin{pmatrix}
		c_1 \\
		c_2
	\end{pmatrix}
	=
	\begin{pmatrix}
		e_1 \\
		e_2
	\end{pmatrix}
	+\frac{b_1a_2-b_2a_1}{r}
	\begin{pmatrix}
		p_1 \\
		p_2
	\end{pmatrix},
\]
where
\[
	e_{i}=\frac{1}{2}m_{i,1}(m_{i,1}-1)a_1b_1+\frac{1}{2}m_{i,2}(m_{i,2}-1)a_2b_2+m_{i,1}m_{i,2}b_1a_2, \quad i=1,2.
\]
Let \(G_M^{(+)}\) be the group of analytic automorphisms of \(W = \mathbb{H} \times \mathbb{C}\) generated by
\begin{align*}
	g_0\colon (w,z)   & \mapsto (\alpha w, z+\tau),                     \\
	g_{i}\colon (w,z) & \mapsto (w+a_{i}, z+b_{i}w+c_{i}), \quad i=1,2, \\
	g_3\colon (w,z)   & \mapsto \Big(w, z+\frac{b_1a_2-b_2a_1}{r}\Big).
\end{align*}
We have the following relations between these generators
\begin{gather*}
	g_3g_{i}=g_{i}g_3 \quad \text{for } i = 0,1,2, \quad g_1^{-1}g_2^{-1}g_1g_2=g_3^r, \\
	g_0g_jg_0^{-1}=g_1^{m_{j,1}}g_2^{m_{j,2}}g_3^{p_{j}} \quad \text{for }  j=1,2.
\end{gather*}
The action of \(G_M^{(+)}\) is free and properly discontinuous.
The quotient space \(X\coloneqq W/G_M^{(+)}\) is an Inoue surface of type \(S^{(+)}\).
Note that \(G_M^{(+)} \simeq H(r)\rtimes \mathbb{Z}\) as an abstract group,
where \(H(r) = \langle g_1, g_2, g_3 \mid g_3g_{i}=g_{i}g_3, g_1^{-1}g_2^{-1}g_1g_2=g_3^r \rangle\)
and \(\mathbb{Z}\) is generated by \(g_0\).
In fact, the centre \(Z(H(r)) \simeq \mathbb{Z}\) is generated by
\(g_3\)
and \(H(r)/Z(H(r)) \simeq \mathbb{Z}^2\).

Similarly, there is a short exact sequence of groups
\[
	1\longrightarrow G_M^{(+)}\longrightarrow \widetilde{\Aut}(X)\longrightarrow \Aut(X)\longrightarrow 1,
\]
where \(\widetilde{\Aut}(X)\leq \Aut(W)\) is the normaliser of \(G_M^{(+)}\).
Now suppose that \(u\in \widetilde{\Aut}(X)\) as in \cref{eq:u} and
\begin{equation}\label{eq:inoue_sp_main}
	ug_{i}u^{-1}=g_1^{n_{i,1}} g_2^{n_{i,2}} g_3^{l_{i}} g_0^{k_{i}}, \quad 0\leq i\leq 3,\; j=1,2
\end{equation}
for some \(l_{i},k_{i}\in \mathbb{Z}\),
\(\mathbf{n}_0 = (n_{0,1}, n_{0,2}), \mathbf{n}_3 = (n_{3,1}, n_{3,2}) \in \mathbb{Z}^2\)
and \(N = (n_{i,j})_{i,j=1,2} \in \Mat_{2\times 2}(\mathbb{Z})\).
Then,
\begin{align*}
	 & \phantom{\;=\;} g_1^{n_{i,1}} g_2^{n_{i,2}} g_3^{l_i} g_0^{k_i}u(w,z)                  \\
	 & = \Big(\alpha^{k_i}s(w)+\sum_j n_{i,j}a_j,t(w,z)+k_i \tau+l_i\frac{b_1a_2 - b_2a_1}{r}
	+ \Big(\sum_j n_{i,j}b_j\Big)\alpha^{k_i}s(w)+\sum_j n_{i,j}c_j+e_i(n)\Big),
\end{align*}
where
\[
	e_{i}(n)=\frac{1}{2}n_{i,1}(n_{i,1}-1)a_1b_1+\frac{1}{2}n_{i,2}(n_{i,2}-1)a_2b_2+n_{i,1}n_{i,2}a_2b_1.
\]
As in the previous subsection,
\cref{eq:inoue_sp_main} implies
\begin{align}
	s(\alpha w)                            & =\alpha^{k_0}s(w)+n_{0,1}a_1+n_{0,2}a_2, \label{eq:sp_g0_s}                                                                                   \\
	t(\alpha w,z+t)                        & =t(w,z) + k_0 \tau + l_0\frac{b_1a_2-b_2a_1}{r}+ \Big(\sum_j n_{0,j}b_j\Big)\alpha^{k_0} s(w)+\sum_j n_{0,j}c_j + e_0 (n), \label{eq:sp_g0_t} \\
	s(w+a_{i})                             & =\alpha^{k_i}s(w) + n_{i,1}a_1+n_{i,2}a_2, \label{eq:sp_gi_s}                                                                                 \\
	t(w + a_i, z + b_i w + c_i)            & =t(w,z) + k_i \tau + l_i\frac{b_1a_2-b_2a_1}{r}+ \Big(\sum_j n_{i,j}b_j\Big)\alpha^{k_i} s(w)+\sum_j n_{i,j}c_j+e_i (n), \label{eq:sp_gi_t}   \\
	s(w)                                   & =\alpha^{k_3}s(w) + n_{3,1}a_1+n_{3,2}a_2, \label{eq:sp_g3_s}                                                                                 \\
	t\Big(w,z+\frac{b_1a_2-b_2a_1}{r}\Big) & =t(w,z)+ k_3 \tau + l_3\frac{b_1a_2-b_2a_1}{r}+ \Big(\sum_j n_{3,j}b_j\Big)\alpha^{k_3} s(w)+\sum_j n_{3,j}c_j + e_3(n). \label{eq:sp_g3_t}
\end{align}

By \cref{eq:sp_g0_s}, using the assumption that \(\alpha>1\), we have
\begin{equation}\label{eq:sp_g0_s_expand}
	\begin{cases}
		ac (1-\alpha^{k_0}) = c^2 (n_{0,1}a_1+n_{0,2}a_2),                                       \\
		ad (\alpha-\alpha^{k_0}) + bc (1-\alpha^{k_0+1}) = cd (1+\alpha)(n_{0,1}a_1+n_{0,2}a_2), \\
		bd (1-\alpha^{k_0}) = d^2 (n_{0,1}a_1+n_{0,2}a_2).
	\end{cases}
\end{equation}
If \(c\neq 0\), then \(k_0=-1\) by the first two equalities of \cref{eq:sp_g0_s_expand} since \(ad-bc>0\).
Similarly, if \(d\neq 0\), then  one has \(k_0=1\) by the last two equalities of \cref{eq:sp_g0_s_expand}.
Therefore, either \(c=0\) and \(k_0=1\), or \(d=0\) and \(k_0=-1\).

By \cref{eq:sp_gi_s}, we have
\begin{equation}\label{eq:sp_gi_s_expand}
	\begin{cases}
		ac (1 - \alpha^{k_{i}}) = c^2(n_{i,1}a_1+n_{i,2}a_2),                              \\
		(aca_{i} + bc + ad) (1 - \alpha^{k_i}) = (c^2a_{i} + 2cd) (n_{i,1}a_1+n_{i,2}a_2), \\
		ada_{i} + bd (1 - \alpha^{k_{i}}) =
		\alpha^{k_{i}}bca_{i} + (cda_{i} + d^2) (n_{i,1}a_1+n_{i,2}a_2).
	\end{cases}
\end{equation}
If \(d=0\), we have \(\alpha^{k_{i}}bca_{i}=0\) for \(i=1,2\) by the last equality of \cref{eq:sp_gi_s_expand}.
This implies that \(a_{i}=0\; (i=1,2)\) since \(\alpha>1\) and \(ad-bc>0\),
which contradicts the fact that \((a_1,a_2)^T\) is an eigenvector of \(M\).
So, \(c=0\) and \(k_0=1\).
Also, the second equality of \cref{eq:sp_gi_s_expand}, which becomes \(ad=\alpha^{k_{i}}ad\), implies that \(k_{i}=0\) for \(i=1,2\).
Now we may rewrite \(s\) as \(s(w)=aw+b\) with \(a>0\) and \(b \in \mathbb{R}\).
Then \cref{eq:sp_g0_s_expand,eq:sp_gi_s_expand} imply that
\begin{equation}\label{eq:inoue_sp_relations_1}
	\begin{cases}
		(1-\alpha)b = n_{0,1}a_1+n_{0,2}a_2, \\
		aa_{i} =n_{i,1}a_1+n_{i,2}a_2, \quad i = 1, 2.
	\end{cases}
\end{equation}

Comparing the coefficients of \(z\) of \cref{eq:sp_g0_t}, we have
\[
	\sum_p C_{p,1}(\alpha w-\sqrt{-1})^p = \sum_p C_{p,1}(w-\sqrt{-1})^p,
\]
which implies that \(C_{p,1}=0\) for all \(p\geq 1\).
For the \(z\)-constant part of \cref{eq:sp_g0_t}, one has the following equality
\begin{align*}
	 & \phantom{\;=\;} \sum_p C_{p,0}(\alpha w-\sqrt{-1})^p + C_{0,1} \tau                                                                \\
	 & =\sum_p C_{p,0}(w-\sqrt{-1})^p + (n_{0,1}b_1+n_{0,2}b_2)\alpha aw+ \tau +l_0 \frac{b_1a_2-b_2a_1}{r}+n_{0,1}c_1+n_{0,1}c_2+e_0(n).
\end{align*}
Similarly, we conclude that \(C_{p,0}=0\) for \(p\geq 2\).
Hence, we may rewrite \(t\) as \(t(w,z)=Aw+B+Cz\) for some \(A,B,C\in \mathbb{C}\) with \(C\neq 0\).
Now comparing the coefficients of \(w\) of \cref{eq:sp_g0_t,eq:sp_gi_t},
we get
\begin{equation}\label{eq:inoue_sp_relations_2}
	\begin{cases}
		\Big(1-\frac{1}{\alpha}\Big) \frac{A}{a}  = n_{0,1}b_1+n_{0,2}b_2,                                                                   \\
		(C-1) \tau                                =l_0 \frac{b_1a_2-b_2a_1}{r}+(n_{0,1}b_1+n_{0,2}b_2)\alpha b+n_{0,1}c_1+n_{0,2}c_2+e_0(n), \\
		Cb_{i}                                   = a(n_{i,1}b_1+n_{i,2}b_2),                                                                 \\
		Aa_{i}+Cc_{i}                             =l_{i}\frac{b_1a_2-b_2a_1}{r}+(n_{i,1}b_1+n_{i,2}b_2)b+n_{i,1}c_1+n_{i,2}c_2+e_{i}(n).
	\end{cases}
\end{equation}
Finally, \cref{eq:sp_g3_s,eq:sp_g3_t} simply give that \(k_3 = 0\), \(l_3=C\) and \(n_{3,1}=n_{3,2}=0\).

Therefore, \(N\) has two eigenvalues \(a\) and \(C/a\)
with eigenvectors \((a_1,a_2)^T\) and \((b_1,b_2)^T\), respectively.
It is not hard to see that the matrix associated to \(u^{-1}\) is \(N^{-1}\).
Then \(\det N=\pm 1\) and hence \(N\in \GL_2(\mathbb{Z})\).
Moreover, by \cref{eq:inoue_sp_relations_1,eq:inoue_sp_relations_2}, \(u\) (if exists) is determined by \(\mathbf{n}_0\), \(N\) and \(B\).

Let
\[
	\Gamma\coloneqq \{N\in \GL_2(\mathbb{Z}) \mid N \text{ and } M \text{ are simultaneously diagonalisable}\},
\]
which is an abelian group,
and let \(\tau\colon \widetilde{\Aut}(X)\longrightarrow \Gamma\) be the homomorphism
(not necessarily surjective) defined by \(u\mapsto N\),
where \(N\) is the matrix associated with \(u\) as constructed above.
Let \(K\) be the kernel of this homomorphism, with image \(\Gamma'\leq \Gamma\).
It is clear that any automorphism in \(K\) has the form
\[
	(w,z)\longmapsto (w+b,z+Aw+B)
\]
for some \(b\in \mathbb{R}\) and \(A,B\in \mathbb{C}\) satisfying \cref{eq:inoue_sp_relations_1,eq:inoue_sp_relations_2}.
An explicit calculation gives that the centre \(Z(K)\simeq \mathbb{C}\), which is generated by
\[
	(w,z)\longmapsto (w,z+B).
\]
Further, \(K/Z(K)\simeq \mathbb{Z} \rtimes \mathbb{Z}\) is generated by the images of
\[
	(w,z)\longmapsto (w+b,z) \quad \text{and} \quad (w,z)\longmapsto (w,z+Aw).
\]
% \(K/Z(K)\simeq \mathbb{Z} \rtimes \mathbb{Z} \leq \mathbb{C}\rtimes \mathbb{R}\),
% since the matrix
% \[
% 	\begin{pmatrix}
% 		a_1 & b_1 \\
% 		a_2 & b_2
% 	\end{pmatrix}
% \]
% is invertible.

Combining these, the snake lemma shows the diagrams below
are commutative with exact rows and columns.
\[
	\xymatrix@C=1pc{
	{} & 1 \ar[d] & 1 \ar[d] & 1 \ar[d] & {} \\
	1 \ar[r] & H(r) \ar[r] \ar[d] & K \ar[r] \ar[d] & F \ar[r] \ar[d] & 1 \\
	1 \ar[r] & G_M^{(+)} \ar[r] \ar[d] & \widetilde{\Aut}(X) \ar[r] \ar[d]^{\tau} & \Aut(X) \ar[r] \ar[d] & 1 \\
	1 \ar[r] & \mathbb{Z} \ar[r] \ar[d] & \Gamma' \ar[r] \ar[d] & \Gamma'/\mathbb{Z} \ar[r] \ar[d] & 1 \\
	{} & 1 & 1 & 1 & {}
	}
	\hspace{3em}
	\xymatrix@C=1pc{
	{} & 1 \ar[d] & 1 \ar[d] & 1 \ar[d] & {} \\
	1 \ar[r] & Z(H(r)) \ar[r] \ar[d] & H(r) \ar[r] \ar[d] & H(r)/Z(H(r)) \ar[r] \ar[d] & 1 \\
	1 \ar[r] & Z(K) \ar[r] \ar[d] & K \ar[r] \ar[d] &K/Z(K) \ar[r] \ar[d] & 1 \\
	1 \ar[r] & F' \ar[r] \ar[d] & F \ar[r] \ar[d] &F'' \ar[r] \ar[d] & 1 \\
	{} & 1 & 1 & 1 & {}
	}
\]
Note that \(F'\) is abelian and \(F''\) is finite and metabelian.
Consequently, \(F\), as the quotient of \(K\) by \(H(r)\),
is virtually abelian and abelian-by-metabelian.

By the above diagrams and \cref{lem:virtually_solvable_ses} (1):
\begin{theorem}\label{thm:tits_altervative_derived_length_inoue_spm}
	Let \(X\) be an Inoue surface of type \(S^{(+)}\) or \(S^{(-)}\).
	Then \(\Aut(X)\) is solvable.
	Moreover, for any subgroup \(G\leq \Aut(X)\), one has \(\ell_{\mathrm{vir}}(G)\leq 4\).
\end{theorem}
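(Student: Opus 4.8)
The plan is to read off both conclusions directly from the two commutative diagrams just constructed, assembling the pieces with the derived-length bookkeeping of \cref{lem:virtually_solvable_ses}~(1). The two short exact sequences I intend to exploit are the bottom row of the right-hand diagram, $1 \to F' \to F \to F'' \to 1$, and the right-hand column of the left-hand diagram, $1 \to F \to \Aut(X) \to \Gamma'/\mathbb{Z} \to 1$. Everything substantive — the explicit description of $\widetilde{\Aut}(X)$, the identification of $K$, $Z(K)$, $\Gamma'$, and the snake-lemma diagrams — has already been carried out, so the remaining task is purely group-theoretic.

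First I would bound the derived length of $F$. From the right-hand diagram, $F'$ is abelian, hence $\ell(F') \leq 1$, and $F''$ is finite and metabelian, hence $\ell(F'') \leq 2$. Since both are solvable, the argument in the proof of \cref{lem:virtually_solvable_ses}~(1) shows that $F$ is genuinely solvable with $\ell(F) \leq \ell(F') + \ell(F'') \leq 3$; concretely, $F^{(2)} \subseteq F'$ and therefore $F^{(3)} \subseteq (F')^{(1)} = 1$. This is exactly the recorded statement that $F$ is abelian-by-metabelian.

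Next I would feed this into the sequence $1 \to F \to \Aut(X) \to \Gamma'/\mathbb{Z} \to 1$. Because $\Gamma \leq \GL_2(\mathbb{Z})$ is abelian, so is its subgroup $\Gamma'$ and the quotient $\Gamma'/\mathbb{Z}$; thus $\ell(\Gamma'/\mathbb{Z}) \leq 1$. As both kernel and quotient are solvable, \cref{lem:virtually_solvable_ses}~(1) (again in the solvable-by-solvable form of its proof) gives that $\Aut(X)$ is solvable with $\ell(\Aut(X)) \leq \ell(F) + \ell(\Gamma'/\mathbb{Z}) \leq 3 + 1 = 4$; equivalently, $\Aut(X)^{(1)} \subseteq F$ forces $\Aut(X)^{(4)} = (\Aut(X)^{(1)})^{(3)} \subseteq F^{(3)} = 1$. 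Finally, since $\Aut(X)$ is itself solvable of derived length at most $4$, every subgroup $G \leq \Aut(X)$ satisfies $\ell(G) \leq \ell(\Aut(X)) \leq 4$, and a fortiori $\ell_{\mathrm{vir}}(G) \leq \ell(G) \leq 4$.

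I do not expect a genuine obstacle at this stage, since the delicate structural work is already complete; the plan is essentially to trace derived subgroups across two extensions. The one point requiring care is keeping the bound tight at $4$ rather than something larger: this relies on using that $F''$ is \emph{metabelian} (not merely solvable) and that $\Gamma'/\mathbb{Z}$ is \emph{abelian}, so that the two extensions contribute $3 + 1$ rather than a looser sum.
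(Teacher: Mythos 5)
Your proof is correct and is essentially identical to the paper's own argument: the paper deduces the theorem directly from the two commutative diagrams together with \cref{lem:virtually_solvable_ses}~(1), bounding \(\ell(F)\leq 3\) via the extension \(1\to F'\to F\to F''\to 1\) (abelian-by-metabelian) and then adding \(1\) for the abelian quotient \(\Gamma'/\mathbb{Z}\) in the right-hand column of the first diagram. Your explicit tracing of derived subgroups (\(F^{(3)}=1\), hence \(\Aut(X)^{(4)}=1\)) is just an unwound version of the same bookkeeping.
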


Similar to the argument before \cref{thm:t_jordan_inoue_sm}, any torsion subgroup of \(\Gamma'/\mathbb{Z}\) is finite.
It follows from the diagram above that each torsion subgroup of \(\Aut(X)\)
has an abelian subgroup with bounded index depending on \(X\).

\begin{theorem}\label{thm:t_jordan_inoue_spm}
	Let \(X\) be an Inoue surface of type \(S^{(+)}\) or \(S^{(-)}\).
	Then \(\Aut(X)\) is T-Jordan.
\end{theorem}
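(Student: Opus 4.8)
The plan is to extract a short exact sequence for \(\Aut(X)\) directly from the commutative diagrams constructed above and then invoke \cref{lem:t_jordan_exact_sequence}~(1). Concretely, the right-hand column of the first diagram reads
\[
	1 \longrightarrow F \longrightarrow \Aut(X) \longrightarrow \Gamma'/\mathbb{Z} \longrightarrow 1,
\]
so it suffices to verify two things: that \(F\) is T-Jordan, and that \(\Gamma'/\mathbb{Z}\) has bounded torsion subgroups. Once both hold, \cref{lem:t_jordan_exact_sequence}~(1) applied to this sequence (with \(N = F\) and \(H = \Gamma'/\mathbb{Z}\)) yields the claim.

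For the first input I would use the second commutative diagram, which exhibits \(F\) as an extension \(1 \to F' \to F \to F'' \to 1\) with \(F'\) abelian and \(F''\) finite; hence \(F\) is virtually abelian, as already recorded. A virtually abelian group is automatically T-Jordan: if \(A \leq F\) is abelian of finite index \(d\), then for every torsion subgroup \(H \leq F\) the intersection \(H \cap A\) is abelian with \([H : H \cap A] \leq d\), so the constant \(J(F) = d\) works.

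For the second input I would invoke the number-theoretic claim established immediately before the statement, namely that the \emph{set} of torsion elements of \(\Gamma'/\mathbb{Z}\) is finite, where \(\mathbb{Z}\) is the image of \(\langle g_0 \rangle\), i.e.\ \(\langle M \rangle\). Finiteness of the torsion-element set bounds the order of every torsion subgroup uniformly, which is exactly the bounded-torsion property. Its heart is the same argument as in the \(S_M\) case: there is a largest \(n_0\) with \(M^{1/n_0} \in \GL_2(\mathbb{Z})\), and any torsion class lifts to some \(N \in \Gamma'\) with \(N^n = M^m\) for coprime \((m,n)\); B\'ezout then forces \(M^{1/n} \in \GL_2(\mathbb{Z})\), whence \(n \leq n_0\), leaving only finitely many possibilities.

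Combining these, \cref{lem:t_jordan_exact_sequence}~(1) gives that \(\Aut(X)\) is T-Jordan, and since the \(S^{(-)}\) construction is parallel to \(S^{(+)}\) the same argument applies verbatim. I expect the genuine work to have been front-loaded into the structural analysis of \(\widetilde{\Aut}(X)\); the remaining subtlety is bookkeeping, namely confirming that the bottom-row \(\mathbb{Z}\) really is \(\langle M \rangle\) so that the \(\GL_2(\mathbb{Z})\)-root argument applies, and that it is the finiteness of the torsion-element set (rather than merely of each individual torsion subgroup) that supplies the \emph{uniform} bound demanded by \cref{lem:t_jordan_exact_sequence}~(1).
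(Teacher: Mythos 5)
Your proposal is correct and takes essentially the same route as the paper: the paper likewise reads off the extension \(1 \to F \to \Aut(X) \to \Gamma'/\mathbb{Z} \to 1\) from the right-hand column of the first diagram, uses that \(F\) is virtually abelian (hence T-Jordan) and that the torsion of \(\Gamma'/\mathbb{Z}\) is uniformly bounded by the same \(\GL_2(\mathbb{Z})\)-root argument given before \cref{thm:t_jordan_inoue_sm}, and concludes as in \cref{lem:t_jordan_exact_sequence}~(1). You merely spell out the bookkeeping that the paper leaves implicit.
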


% g_1: n_0 = (1-m_{1,1}, -m_{1,2}), l_0 = -p_1
% g_2: n_0 = (-m_{2,1}, 1-m_{2,2}), l_0 = -p_2

\section{Class \texorpdfstring{\Romannum{7}}{VII} Surfaces with \texorpdfstring{\(b_2 > 0\)}{b2>0}}\label{sec:class_vii_surfaces_with_positive_b2}

We will prove \cref{prop:main_class_vii_no_curve} in this section.

\begin{proof}[Proof of \cref{prop:main_class_vii_no_curve}]
	We may assume that \(G\) is an infinite group.
	By \cref{cor:alg_dim_zero_contains_curves}, we only need to consider such an \(X\)
	that does not have any curves.
	Consider the action of \(\Aut(X)\) on \(H^*(X,\mathbb{Q})\),
	and let \(\Aut^*(X)\) be the kernel of \(\Aut(X)\longrightarrow \GL(H^*(X,\mathbb{Q}))\).
	Note that the image of \(G\) in \(\GL(H^*(X,\mathbb{Q}))\) is finite (cf.~\cite{jia2022equivariant}*{Lemma~5.2}).
	By passing to a finite index subgroup, we may assume that \(G\leq \Aut^*(X)\).

	Pick \(\id \neq g \in G\), and let \(G'\) be the centraliser of \(\langle g\rangle\) in \(G\).
	Since \(g\) has finite order, \([G:G']\) is finite.
	Replacing \(G\) by the finite-index subgroup \(G'\), we may assume that \(g \in Z(G)\),
	the centre of \(G\).
	Note that the fixed point set \(\Fix(g)\) of \(g\) is finite with cardinality \(|\Fix(g)| = b_2(X)\)
	(cf.~\cite{prokhorov2020automorphism}*{Lemma~5.4}).
	Consider the action of \(G\) on the finite set \(\Fix(g)\);
	noting that \(g\) is in the centre of \(G\).
	Replacing \(G\) by a subgroup of index \(\leq b_2(X)!\),
	we may assume that \(G\) fixes some point \(x \in \Fix(g)\).
	Applying \cref{lem:fix_point_torsion_embedding},
	the torsion group \(G\) is embedded into \(\GL_2(\mathbb{C})\).
	By \cref{thm:connected_lie_group_t_jordan}, we have \(G\) is virtually abelian.
\end{proof}

\section{Compact K\"ahler Surfaces}\label{sec:compact_kahler_surfaces}

In this section, we deal with smooth compact surfaces which are K\"ahler.
The idea of this section comes from De-Qi Zhang.

Let \(X\) be a compact K\"ahler manifold.
For a subgroup \(G\) of \(\Aut(X)\), define \(G^0\coloneqq G \cap \Aut_0(X)\) and
denote by \(N(G)\) the set of all elements in \(G\) with zero entropy.

\begin{lemma}\label{lem:null_entropy}
	Let \(X\) be a smooth compact K\"ahler surface,
	and \(G \leq \Aut(X)\) a virtually solvable subgroup.
	Then \(\ell_{\mathrm{vir}}(G) \leq 1 +  \ell_{\mathrm{vir}}(N(G))\).
\end{lemma}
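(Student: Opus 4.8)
The plan is to exploit the dynamical dichotomy on cohomology. For a compact K\"ahler surface the topological entropy of \(g\in\Aut(X)\) equals \(\log\rho(g^{*}|_{H^{2}(X,\mathbb{R})})\), the logarithm of the spectral radius, and \(g^{*}\) preserves the intersection form on \(H^{1,1}(X,\mathbb{R})\), which by the Hodge index theorem has signature \((1,h^{1,1}(X)-1)\). First I would record the representation \(\psi\colon G\to \GL(H^{2}(X,\mathbb{Z}))\); since the \(\Aut(X)\)-action on cohomology is locally constant, \(\Aut_0(X)\) acts trivially, so \(G^{0}=G\cap\Aut_0(X)\subseteq N(G)\), and moreover membership in \(N(G)\) depends only on \(\psi(g)\), i.e.\ \(N(G)=\psi^{-1}(N(\psi(G)))\). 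After replacing \(G\) by an index-\(\le 2\) subgroup so that \(\psi(G)\) preserves each of the two sheets of the positive cone, \(\psi(G)\) acts by isometries on the hyperbolic space \(\mathbb{H}=\mathbb{H}^{h^{1,1}(X)-1}\) associated with the Lorentzian form, and \(g\) has zero entropy exactly when \(\psi(g)\) is non-loxodromic (elliptic or parabolic).

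The core step is to promote \(N(G)\) to a normal subgroup with abelian quotient. Since \(G\) is virtually solvable it contains no non-abelian free subgroup, so by the Tits alternative its image \(\psi(G)\le\operatorname{Isom}(\mathbb{H})\) is elementary. I would split into the standard cases: if \(\psi(G)\) fixes a point of \(\mathbb{H}\) or a single point of \(\partial\mathbb{H}\), then every element is non-loxodromic, whence \(N(G)=G\) and the asserted inequality is trivial. Otherwise \(\psi(G)\) preserves a geodesic; after a further index-\(\le 2\) reduction it fixes both of its endpoints on \(\partial\mathbb{H}\), and the eigenvalue on the attracting null ray defines a homomorphism \(\lambda\colon G\to\mathbb{R}_{>0}\) whose kernel is precisely the set of non-loxodromic elements, i.e.\ \(\ker\lambda=N(G)\). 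Thus \(N(G)\lhd G\) and \(G/N(G)\cong\operatorname{im}\lambda\) is abelian; invoking the Dinh--Sibony rank bound (commuting positive-entropy automorphisms generate \(\mathbb{Z}^{r}\) with \(r\le\dim X-1=1\)) one gets \(G/N(G)\cong\mathbb{Z}^{r}\) with \(r\le 1\), in particular finitely generated and free abelian.

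It then remains to convert the extension \(1\to N(G)\to G\to\mathbb{Z}^{r}\to 1\) into the derived-length estimate. Using that \(N(G)\) is finitely generated, \cref{lem:finite_index_normal} furnishes a finite-index subgroup \(N_{1}\le N(G)\) that is characteristic in \(N(G)\), hence normal in \(G\), and that may be taken solvable with \(\ell(N_{1})=\ell_{\mathrm{vir}}(N(G))\). Then \(G/N_{1}\) is an extension of \(\mathbb{Z}^{r}\) by the finite group \(N(G)/N_{1}\); such a finite-by-(free abelian) group is virtually abelian, so \(\ell_{\mathrm{vir}}(G/N_{1})\le 1\). Applying \cref{lem:virtually_solvable_ses}~(1) to \(1\to N_{1}\to G\to G/N_{1}\to 1\) yields
\[
	\ell_{\mathrm{vir}}(G)\le \ell(N_{1})+\ell_{\mathrm{vir}}(G/N_{1})\le \ell_{\mathrm{vir}}(N(G))+1,
\]
as desired.

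The main obstacle is the structural heart, namely that \(N(G)\) is (virtually) a normal subgroup with free-abelian quotient of rank \(\le 1\): this rests on the classification of elementary virtually solvable subgroups of \(\operatorname{Isom}(\mathbb{H})\), the entropy--loxodromy dictionary, and the two-dimensional Dinh--Sibony bound, which is the Tits-type input specific to the K\"ahler surface setting. A secondary technical point to verify is the finite generation of \(N(G)\) needed for \cref{lem:finite_index_normal}: the image \(\psi(N(G))\) is a discrete subgroup of a parabolic stabiliser with no loxodromic elements, hence virtually nilpotent and thus polycyclic, so the only delicate contribution is \(\ker\psi\cap G\), controlled via the Fujiki--Lieberman structure of \(\Aut_0(X)\); this is the place where one must argue with care.
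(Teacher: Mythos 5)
Your overall skeleton matches the paper's: reduce (via the Tits-type theorem of Campana--Wang--Zhang, which you essentially re-derive through the isometric action on the hyperbolic space attached to \(H^{1,1}\)) to the situation where \(G\) is solvable, \(N(G)\lhd G\) and \(G/N(G)\simeq\mathbb{Z}^r\) with \(r\le 1\); then produce a finite-index subgroup of \(N(G)\) that is normal in \(G\) and realises \(\ell_{\mathrm{vir}}(N(G))\); then conclude with \cref{lem:virtually_solvable_ses}~(1). The first half is fine, if a longer route to what the paper obtains by simply citing \cite{campana2013automorphism}*{Theorem~1.5}, and the closing bookkeeping is correct.

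The gap is exactly at the point you dismiss as a ``secondary technical point'', and it is not secondary: you apply \cref{lem:finite_index_normal} to \(N(G)\) itself, which requires \(N(G)\) to be finitely generated, and this is false in general. Indeed \(N(G)\supseteq G\cap\Aut_0(X)\), and a subgroup of a positive-dimensional Lie group need not be finitely generated: for \(X\) a two-dimensional complex torus and \(G\) the group of translations by torsion points, \(N(G)=G\simeq(\mathbb{Q}/\mathbb{Z})^4\). Your proposed repair --- controlling \(\ker\psi\cap G\) via the Fujiki--Lieberman structure of \(\Aut_0(X)\) --- cannot produce finite generation, because the obstruction lives precisely in that kernel; and without finite generation neither the characteristic-subgroup trick nor a normal-core argument yields a finite-index subgroup of \(N(G)\) that is normal in \(G\) (a non-finitely-generated group may have infinitely many subgroups of a given finite index, so the core of your \(N_1\) in \(G\) may have infinite index in \(N(G)\)). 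The paper's device is to apply \cref{lem:finite_index_normal} not to \(N(G)\) but to its image \(N(G)|_{H^2(X,\mathbb{C})}\), which \emph{is} finitely generated by \cite{campana2013automorphism}*{Theorem~2.2(1)}, and then to pull the resulting characteristic subgroup back to \(N(G)\): the preimage is normal in \(G\), of finite index in \(N(G)\), and (using \cite{dinh2022zero} to control the part coming from the kernel of the cohomology representation) still has derived length \(\ell_{\mathrm{vir}}(N(G))\). With that substitution your argument goes through; as written, the key normality step is unjustified.
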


\begin{proof}
	By taking a finite-index subgroup,
	we may assume that \(G\) is a solvable,
	\(N(G)\) is a normal subgroup of \(G\)
	and \(G/N(G) \simeq \mathbb{Z}^r\)
	for some \(r = 0,1\) (cf.~\cite{campana2013automorphism}*{Theorem~1.5}).
	Take a finite-index subgroup \(N' \leq N(G)\) such that \(\ell(N') = \ell_{\mathrm{vir}}(N(G))\)
	and \(\ell(N'/N'^0) \leq 1\) as in \cite{dinh2022zero}*{Theorem~1.2 and Proposition~2.6}.

	Note that \(N(G)|_{H^2(X,\mathbb{C})}\) is finitely generated (cf.~\cite{campana2013automorphism}*{Theorem~2.2(1)}).
	Since \(N'|_{H^2(X,\mathbb{C})}\) has finite index in \(N(G)|_{H^2(X,\mathbb{C})}\),
	after replacing \(N'\) by a finite-index subgroup,
	we may assume that \(N'|_{H^2(X,\mathbb{C})}\) is characteristic in \(N(G)|_{H^2(X,\mathbb{C})}\) by \cref{lem:finite_index_normal}.
	In particular, \(N'|_{H^2(X,\mathbb{C})} \lhd G|_{H^2(X,\mathbb{C})}\).

	Let \(N \lhd N(G)\) be the preimage of \(N'|_{H^2(X,\mathbb{C})}\).
	Then \(N \lhd G\), \(N(G)/N\) is finite,
	\(\ell(N) = \ell_{\mathrm{vir}}(N(G))\) and \(\ell(N/N^0) \leq 1\).
	Since \((G/N)/(N(G)/N) \simeq \mathbb{Z}^r\),
	replacing \(G\) by a finite-index subgroup, we may assume \(G/N\simeq \mathbb{Z}^r\)
	(cf.~\cite{zhang2013algebraic}*{Lemma~2.4}).
	It follows that
	\[
		\ell_{\mathrm{vir}}(G) \leq \ell(G) \leq \ell(G/N) + \ell(N) \leq 1 + \ell_{\mathrm{vir}}(N(G)). \qedhere
	\]
\end{proof}

Indeed, it follows from the above proof that
\[
	\ell_{\mathrm{vir}}(G) \leq 1 + \ell(N) \leq 1 + \ell(N/N^0) + \ell(N^0) \leq 2 +  \ell(N^0).
\]
When \(\kappa(X) \geq 0\), \(\Aut_0(X)\) is a complex torus (cf.~\cite{fujiki1978automorphism}*{Corollary~5.11}).
In particular, \(N^0 = N \cap \Aut_0(X)\) is abelian and hence of derived length (at most) \(1\).
Thus, we have

\begin{proposition}\label{prop:derived_length_kahler_pos}
	Let \(X\) be a smooth compact K\"ahler surface with Kodaira dimension \(\kappa(X) \geq 0\).
	Let \(G \leq \Aut(X)\) be virtually solvable.
	Then \(\ell_{\mathrm{vir}}(G) \leq 3\).
\end{proposition}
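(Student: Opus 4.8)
The plan is to read off the bound from the chain of inequalities already isolated in the proof of \cref{lem:null_entropy}, invoking the hypothesis $\kappa(X) \geq 0$ only at the very last step. First I would apply \cref{lem:null_entropy} to the virtually solvable group $G$ and, more precisely, use the sharper estimate recorded immediately after its proof: after passing to a suitable finite-index subgroup there is a normal subgroup $N \lhd G$, lying inside the null-entropy part $N(G)$, with $N^0 = N \cap \Aut_0(X)$, $\ell(N/N^0) \leq 1$, and
\[
\ell_{\mathrm{vir}}(G) \leq 1 + \ell(N) \leq 1 + \ell(N/N^0) + \ell(N^0) \leq 2 + \ell(N^0).
\]
Here the leading $1+$ comes from $\ell(G/N) \leq 1$ (as $G/N \simeq \mathbb{Z}^r$ is abelian), and the middle step is the subadditivity of derived length along $1 \to N^0 \to N \to N/N^0 \to 1$. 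This reduces the proposition to the single assertion $\ell(N^0) \leq 1$, that is, to showing the subgroup $N^0 \leq \Aut_0(X)$ is abelian.

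Second, I would feed in the Kodaira dimension hypothesis through the structure of the identity component of the automorphism group. By Fujiki's theorem (\cite{fujiki1978automorphism}*{Corollary~5.11}), for a compact K\"ahler manifold with $\kappa(X) \geq 0$ the identity component $\Aut_0(X)$ is a complex torus; in particular it is abelian. Hence its subgroup $N^0 = N \cap \Aut_0(X)$ is abelian as well, so $\ell(N^0) \leq 1$. Substituting this into the displayed inequality yields $\ell_{\mathrm{vir}}(G) \leq 2 + 1 = 3$, as claimed.

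I do not expect a genuine obstacle in this statement itself: all of the hard input---the entropy dichotomy reducing $G$ to its null-entropy subgroup, and the structure of null-entropy solvable groups of surface automorphisms---is already packaged in \cref{lem:null_entropy} through \cite{campana2013automorphism} and \cite{dinh2022zero}. The only point requiring care is the bookkeeping: one must use the \emph{same} finite-index subgroup $N$ for which $\ell(N/N^0) \leq 1$ and $N^0 \leq \Aut_0(X)$ hold simultaneously, which is exactly what that lemma provides, so no further reduction is needed. The contribution specific to the present proposition is simply the observation that $\kappa(X) \geq 0$ forces $\Aut_0(X)$, and hence $N^0$, to be abelian, replacing the a priori bound $\ell(N^0) \leq 2$ (coming from a general two-dimensional situation) by $\ell(N^0) \leq 1$.
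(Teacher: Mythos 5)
Your proposal is correct and follows essentially the same route as the paper: it invokes the refined inequality \(\ell_{\mathrm{vir}}(G) \leq 2 + \ell(N^0)\) extracted from the proof of \cref{lem:null_entropy}, and then uses Fujiki's result that \(\Aut_0(X)\) is a complex torus when \(\kappa(X) \geq 0\) to conclude \(\ell(N^0) \leq 1\). No discrepancies with the paper's argument.
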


Suppose that now \(X\) is a compact K\"ahler surface with \(\kappa(X) = -\infty\).
Then \(X\) is projective.

\begin{proposition}\label{prop:derived_length_unipotent}
	Let \(X\) be a smooth projective surface.
	Suppose that \(G \leq \Aut(X)\) is virtually solvable,
	and \(U \leq \Aut_0(X)\) is non-trivial unipotent with \(U \lhd G\).
	Then the derived length \(\ell(U) \leq 2\) and
	after replacing by a finite-index subgroup,
	we have \(G = G^0\).
\end{proposition}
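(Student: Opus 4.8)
The plan is to first extract from $U$ a one-parameter unipotent subgroup that is \emph{normal in all of $G$}, and then treat the two assertions separately: the metabelian bound $\ell(U)\le 2$ will come from the fibration induced by this subgroup, while the conclusion $G=G^0$ will follow by showing that every element of $G$ has zero entropy and, because the fibration is \emph{rational}, acts with finite order on cohomology. Throughout I freely replace $G$ by finite-index subgroups.

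\textbf{Normal $\mathbb{G}_a$ and its fibration.} Applying \cref{lem:one_dim_normal_subgroup} to the (virtually) solvable group $G$ and the normal unipotent subgroup $U$, I obtain, after a finite-index reduction, a subgroup $Z_1\cong\mathbb{G}_a$ contained in the centre of $U$ and normal in $G$. The orbits of $Z_1$ are one-dimensional (or points), and their closures are swept out by a $G$-invariant fibration $\pi\colon X\dashrightarrow B$ onto a smooth curve (the Rosenlicht quotient of the $Z_1$-action). Since a general $Z_1$-orbit is $\cong\mathbb{A}^1$ and $Z_1$ acts on it nontrivially, a general fibre is a rational curve; after a $G$-equivariant resolution I may assume $\pi$ is a morphism onto $B$ with general fibre $\cong\mathbb{P}^1$ and $Z_1$ acting fibrewise as translations fixing the point at infinity.

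\textbf{The bound $\ell(U)\le 2$.} As $Z_1\lhd U$, the group $U$ preserves $\pi$ and maps to $\Aut(B)$; being connected unipotent, its image there is abelian (a copy of $\mathbb{G}_a$ when $B\cong\mathbb{P}^1$, and trivial otherwise, since $\Aut(B)$ has no unipotent subgroup for $g(B)\ge1$). Let $U_1\lhd U$ be the kernel, so $Z_1\le U_1$ and $U_1$ acts along the fibres. On a general fibre $\cong\mathbb{P}^1$ a unipotent automorphism fixing $\infty$ is a translation, so the faithful map $U_1\hookrightarrow\prod_{\text{general fibres}}\Aut(\mathbb{P}^1)$ lands in a product of abelian translation groups; hence $U_1$ is abelian. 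Thus $U$ is abelian-by-abelian, i.e.\ metabelian, which gives $\ell(U)\le 2$.

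\textbf{The conclusion $G=G^0$.} Every $g\in G$ permutes the fibres of $\pi$, so $g^{*}$ fixes the fibre class $[F]$, a nonzero nef class with $[F]^2=0$. Decomposing any $g^*$-invariant class against the eigenvectors $\theta^{\pm}$ of a hypothetical positive-entropy $g$ and using that $\langle\theta^+,\theta^-\rangle^{\perp}$ is negative definite (Hodge index), one sees $g^{*}$ cannot fix such a class; hence every $g$ has zero entropy and $G=N(G)$. To upgrade this to $G\le\Aut_0(X)$ it suffices, by the Fujiki--Lieberman theorem that $\Aut_0(X)$ has finite index in $\Aut_\tau(X)=\{g: g^{*}|_{H^{2}}=\id\}$ (cf.~\cite{lieberman1978compactness}), to prove that the image $G|_{H^{2}(X,\mathbb{C})}$ is finite. \textbf{This last point is the main obstacle.} For a preserved genus-one (elliptic) fibration, translations by sections produce a genuinely unipotent, infinite action on $H^{2}$, so the argument must exploit that here $\pi$ is rational: because $Z_1\cong\mathbb{G}_a$ acts nontrivially on the fibres, $\pi$ is an honest $\mathbb{P}^1$-fibration with no Mordell--Weil-type translation structure, and the fibre-preserving automorphisms act on $\NS(X)$ with bounded degree. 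I expect to make this rigorous either by passing to a $G$-equivariant relatively minimal ruled model $\widetilde{X}\to B$ and computing directly the $G$-action on $\NS(\widetilde{X})$, or by invoking the classification of zero-entropy actions on $H^{2}$, in which the linear-growth (infinite unipotent) case is tied precisely to invariant genus-one fibrations and is therefore excluded. Once $G|_{H^{2}(X,\mathbb{C})}$ is trivial after a finite-index reduction, we get $G\le\Aut_\tau(X)$ and then $G=G^0$.
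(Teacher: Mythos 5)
Your treatment of the first assertion, \(\ell(U)\leq 2\), is essentially the paper's argument: extract \(Z_1\simeq\mathbb{G}_a\) normal in \(G\) via \cref{lem:one_dim_normal_subgroup}, pass to the induced \(\mathbb{P}^1\)-fibration, and observe that the kernel of \(U\to U|_B\) restricts injectively to general fibres as a unipotent (hence abelian) subgroup of \(\mathrm{PGL}_2\), while \(U|_B\) is abelian. That part is fine.

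The second assertion is where you have a genuine gap, and you say so yourself. The missing idea is that you do not need to discuss entropy or the action on \(H^2\) at all: since \(Z_1\lhd G\), the section at infinity (the \(Z_1\)-fixed locus in the general fibre) is \(G\)-invariant, and \(G\) also fixes the pullback of an ample class from the base curve. The sum \(H\) of these two classes is \emph{big}, and the Fujiki--Lieberman theorem (\cite{fujiki1978automorphism}*{Theorem~4.8}, \cite{lieberman1978compactness}*{Proposition~2.2}, in the form of \cite{dinh2015compact}*{Corollary~2.2}) says that the stabiliser of a big class in \(\Aut(X)\) contains \(\Aut_0(X)\) with finite index; hence \(G=G^0\) after a finite-index reduction. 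This one-line observation replaces your entire third step. Your proposed fallback routes are not only incomplete but partly based on a misremembered classification: for bimeromorphic surface maps of zero entropy it is \emph{quadratic} growth on \(H^2\) that is tied to invariant genus-one fibrations, while \emph{linear} growth is tied to invariant rational fibrations (and occurs only for maps not conjugate to automorphisms). So "the fibration here is rational" does not by itself rule out an infinite unipotent action on \(H^2\); you would still have to exclude the possibility that elements of \(G\) separately preserve a genus-one fibration and act with quadratic growth, which you do not do. As written, the claim \(G=G^0\) is not established.
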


\begin{proof}
	Replacing by closure or finite-index subgroup,
	we may assume that \(G\leq \Aut(X)\) is closed and solvable,
	\(G|_{\NS(X)_{\mathbb{C}}}\) is connected and closed, and \(U\leq \Aut_0(X)\) is closed and unipotent
	(cf.~\cite{oguiso2006tits}*{Proof of Lemma~2.1(2)}).

	By \cref{lem:one_dim_normal_subgroup},
	after replacing \(G\) by a finite-index subgroup,
	there is a subgroup \(Z_1\simeq \mathbb{G}_a \leq U\) which is normal in \(G\).
	Let \(X \dashrightarrow Y = X/Z_1\) be the quotient map to the cycle space
	(cf.~\cite{fujiki1978automorphism}*{\S~4}).
	Then \(G\) acts biregularly on \(Y\) and the map is \(G\)-equivariant.
	Replacing the map by the graph and taking equivariant resolutions,
	we may assume that both \(X\) and \(Y\) are smooth,
	and the map \(X\longrightarrow Y\) is a morphism,
	a \(\mathbb{P}^1\)-fibration,
	whose general fibre is a curve with dense orbit \(Z_1\cdot x \simeq \mathbb{A}^1\)
	and whose section at infinity is fixed by \(G\).
	Consider the exact sequence
	\[
		1\longrightarrow K\longrightarrow U\longrightarrow U|_Y\longrightarrow 1.
	\]
	Then both \(K\) and \(U|_Y\) are unipotent.
	Now the restriction to generic fibre \(K \longrightarrow K|_{X_{\overline{k(Y)}}}\) is injective
	with image contained in a closed unipotent group acting faithfully
	on \(X_{\overline{k(Y)}} = \mathbb{P}^1_{\overline{k(Y)}}\),
	and hence is abelian.
	It is clear that either \(U|_Y = \{1\}\),
	or \(U|_Y \simeq \mathbb{G}_a\) and \(Y = \mathbb{P}^1\).
	Hence \(\ell(U) \leq \ell(U|_Y) + 1 \leq 2\).

	Note that \(G\) fixes a big class \(H_Y\) on the curve \(Y\).
	Take the class \(H\) to be the sum of the pullback of \(H_Y\) and the
	section at infinity of the \(\mathbb{P}^1\)-fibration \(X\longrightarrow Y\).
	Then \(G\) fixes the big class \(H\) on \(X\) and hence replaced by a finite-index subgroup,
	\(G\) is contained in \(\Aut_0(X)\)
	(cf.~\cite{fujiki1978automorphism}*{Theorem~4.8}, \cite{lieberman1978compactness}*{Proposition~2.2} and \cite{dinh2015compact}*{Corollary~2.2}).
	Thus \(G = G^0\).
\end{proof}

\begin{proposition}\label{prop:derived_length_proj}
	Let \(G\) be a smooth projective surface.
	Suppose that \(G \leq \Aut(X)\)	is virtually solvable.
	Then \(\ell_{\mathrm{vir}}(G) \leq 4\).
\end{proposition}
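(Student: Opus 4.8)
The plan is to study $G$ through its action on $\NS(X)$, peeling off the part contained in $\Aut_0(X)$ and treating the unipotent contribution by \cref{prop:derived_length_unipotent}. Since $X$ is projective (hence Kähler), if $\kappa(X)\ge 0$ then $\ell_{\mathrm{vir}}(G)\le 3$ already by \cref{prop:derived_length_kahler_pos}, so I may assume $\kappa(X)=-\infty$. Then $p_g(X)=0$, so $H^2(X,\mathbb{R})=H^{1,1}(X,\mathbb{R})=\NS(X)_{\mathbb{R}}$ carries an intersection form of signature $(1,\rho(X)-1)$ by the Hodge index theorem, and the kernel of the representation $\Aut(X)\to\GL(\NS(X)_{\mathbb{R}})$ coincides with the kernel on $H^2(X,\mathbb{Q})$, which contains $\Aut_0(X)$ as a finite-index subgroup by the theorem of Fujiki and Lieberman (cf.~\cite{fujiki1978automorphism}*{Theorem~4.8}, \cite{lieberman1978compactness}*{Proposition~2.2}). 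After replacing $G$ by a finite-index subgroup (harmless for $\ell_{\mathrm{vir}}$), I may assume $G$ is solvable and that the above kernel meets $G$ exactly in $G^0\coloneqq G\cap\Aut_0(X)$, giving a short exact sequence
\[
	1\longrightarrow G^0\longrightarrow G\longrightarrow G|_{\NS(X)}\longrightarrow 1 .
\]
By \cref{lem:virtually_solvable_ses}~(1) it then suffices to bound $\ell(G^0)$ and $\ell_{\mathrm{vir}}(G|_{\NS(X)})$, and I would split the argument according to the unipotent radical of the solvable algebraic group $\overline{G^0}$, the Zariski closure of $G^0$ in $\Aut_0(X)$. Note $\overline{G^0}$ is normal in $G$, since $G$ normalises $G^0$ by algebraic automorphisms of $\Aut_0(X)$; hence so is its unipotent radical.

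First I would treat the case where $U\coloneqq R_{\mathrm u}(\overline{G^0})$ is non-trivial. Then $U\le\Aut_0(X)$ is a non-trivial unipotent subgroup normal in $G$, so \cref{prop:derived_length_unipotent} applies: $\ell(U)\le 2$ and, after a finite-index reduction, $G=G^0\le\Aut_0(X)$. In particular $G$ acts trivially on $\NS(X)$, and it remains to bound $\ell(G^0)$. Applying Chevalley's structure theorem to the connected solvable group $(\overline{G^0})^{\circ}$, I write $1\to L\to(\overline{G^0})^{\circ}\to A\to 1$ with $A$ an abelian variety and $L=U'\rtimes T$ a connected linear solvable group, $U'$ its (normal) unipotent radical and $T$ a torus. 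Since $U'\le\Aut_0(X)$ is unipotent and normal in $G$, \cref{prop:derived_length_unipotent} gives $\ell(U')\le 2$, whence $\ell(L)\le\ell(U')+\ell(T)\le 3$; as $A$ is abelian and $L$ is normal with abelian quotient, $\ell((\overline{G^0})^{\circ})\le 1+\ell(L)\le 4$. Therefore $\ell_{\mathrm{vir}}(G)\le\ell_{\mathrm{vir}}(G^0)\le 4$.

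In the complementary case $R_{\mathrm u}(\overline{G^0})=\{1\}$, the connected group $(\overline{G^0})^{\circ}$ is an extension of an abelian variety by a torus, hence a semiabelian variety and in particular commutative; thus $G^0$ is virtually abelian and, after a further finite-index reduction, $\ell(G^0)\le 1$. It remains to bound the image $G|_{\NS(X)}$, a virtually solvable subgroup of the orthogonal group of the form of signature $(1,\rho(X)-1)$ above. Here I would invoke \cite{campana2013automorphism}*{Theorem~1.5}: after finite index the null-entropy part $N(G)$ is normal with $G/N(G)\simeq\mathbb{Z}^{r}$, $r\le 1$, and the elements of $N(G)$ act quasi-unipotently on $\NS(X)$. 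Hence, after finite index, $N(G)|_{\NS(X)}$ is unipotent, and every unipotent subgroup of $\mathrm{O}(1,\rho(X)-1)$ lies in the abelian horospherical group fixing a common isotropic ray; so $N(G)|_{\NS(X)}$ is virtually abelian, and $G|_{\NS(X)}$, an extension of it by $\mathbb{Z}^{r}$, satisfies $\ell_{\mathrm{vir}}(G|_{\NS(X)})\le 1+r\le 2$. Combining with \cref{lem:virtually_solvable_ses}~(1) gives $\ell_{\mathrm{vir}}(G)\le\ell(G^0)+\ell_{\mathrm{vir}}(G|_{\NS(X)})\le 1+2=3$.

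In either case $\ell_{\mathrm{vir}}(G)\le 4$, as required. The step I expect to be the main obstacle is the bound on $G|_{\NS(X)}$ in the second case: one must exploit the dynamical structure of surface automorphisms — the quasi-unipotence of zero-entropy actions together with the rank bound $r\le 1$ of \cite{campana2013automorphism} and the fact that horospherical subgroups of $\mathrm{O}(1,\rho(X)-1)$ are abelian — rather than the naive linear estimate of \cref{lem:derived_length_linear_gp}, which only gives the unbounded Picard number $\rho(X)$. The remaining care is in checking that the unipotent radical of $\overline{G^0}$ is normal in $G$, so that \cref{prop:derived_length_unipotent} is applicable.
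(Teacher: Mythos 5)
Your proof is essentially sound and reaches the stated bound, but it takes a partly different route from the paper's. The shared skeleton is the dichotomy on a non-trivial unipotent normal subgroup of \(\Aut_0(X)\) together with \cref{prop:derived_length_unipotent}. Where you diverge: (i) in the unipotent case you bound \(\ell(G^0)\) by running Chevalley's theorem and counting \(U'\), \(T\) and \(A\) separately, which gives \(2+1+1=4\); the paper instead quotes Iitaka's lemma that the quotient of a connected subgroup of \(\Aut_0(X)\) by the unipotent radical of its linear part is a \emph{semi-torus}, hence commutative as a whole, which sharpens this case to \(3\) --- harmless for the statement, but a cleaner count. (ii) For the discrete quotient, the paper appeals to \cite{dinh2022zero}*{Theorem~1.2} (through \cref{lem:null_entropy}) to control the null-entropy part, whereas you re-derive the surface case of that input by hand: quasi-unipotence of zero-entropy actions, finite generation of \(N(G)|_{H^2}\) from \cite{campana2013automorphism}, and the fact that unipotent subgroups of \(\mathrm{O}(1,\rho(X)-1)\) lie in an abelian horospherical subgroup. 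That argument is correct and self-contained in dimension two; it buys independence from \cite{dinh2022zero} at the price of some lattice-theoretic work the paper outsources, and it correctly identifies that the naive bound of \cref{lem:derived_length_linear_gp} (which depends on \(\rho(X)\)) must be avoided.

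One bookkeeping point in your second case: ``after a further finite-index reduction, \(\ell(G^0)\le 1\)'' is not quite free. Since \(G^0\) is the kernel of \(G\to G|_{\NS(X)}\), you cannot shrink it by passing to a finite-index subgroup of \(G\); what you actually have is an abelian subgroup \(A=G\cap(\overline{G^0})^{\circ}\) that is normal in \(G\) with \(G^0/A\) finite. Feeding this through \cref{lem:virtually_solvable_ses} costs an extra \(+1\) (apply part~(2) to the finite kernel \(G^0/A\) in \(G/A\), then part~(1) to \(A\)), so the honest bound in that case is \(1+1+2=4\) rather than the \(3\) you record. This does not endanger the proposition, whose target is \(4\), but the intermediate claim is not justified as written.
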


\begin{proof}
	Replacing by finite-index subgroup,
	we may assume that \(G\) is solvable.

	First assume that \(G = G^0\) (and connected).
	Let \(U\) be the unipotent radical of (the linear part of) \(G = G^0\) so that
	\(G/U\) is a semi-torus and hence commutative (cf.~\cite{iitaka1976logarithmic}*{Lemma~4}).
	Thus, \(\ell_{\mathrm{vir}}(G) \leq 1 + \ell(U) \leq 3\) by \cref{prop:derived_length_unipotent}.

	Now we may assume that \(G \neq G^0\) even after replacing \(G\) by a finite-index subgroup.
	Let \(G^0_0 \lhd G^0\) be the identity component,
	Then by \cref{prop:derived_length_unipotent}, the unipotent radical of (the linear part of) \(G^0_0\)
	is trivial and hence \(G^0_0\) itself is a semi-torus and hence commutative.

	If \(G\) is of zero entropy, by \cite{dinh2022zero}*{Theorem~1.2} one has \(\ell(G/G^0) \leq 1\) after
	replacing \(G\) by a finite-index subgroup.
	Consider the short exact sequence
	\[
		1\longrightarrow G^0/G^0_0\longrightarrow G/G^0_0\longrightarrow G/G^0\longrightarrow 1.
	\]
	Since the first term is a finite group, its centraliser in \(G/G^0_0\) is of finite index
	(say of index \(1\), after replacing \(G\) by a finite index subgroup).
	Thus,
	\begin{gather*}
		\ell(G/G^0_0)          \leq 1 + \ell(G/G^0) = 2,                             \\
		\ell_{\mathrm{vir}}(G) \leq \ell(G) \leq \ell(G/G^0_0) + \ell(G^0_0) \leq 3.
	\end{gather*}

	If \(G\) is not of zero entropy,
	using \cref{lem:null_entropy} and the argument above,
	\(\ell_{\mathrm{vir}}(G) \leq 1 + \ell_{\mathrm{vir}}(N(G)) \leq 1 + 3 = 4\).
\end{proof}

\section{Summary}\label{sec:summary}

In the K\"ahler setting, \cref{prop:main_t_jordan} follows from \cref{thm:class_c};
\Cref{thm:main_tits_alternative} has been proved in \cite{campana2013automorphism}*{Theorem~1.5};
\Cref{thm:main_derived_length} follows from \cref{prop:derived_length_kahler_pos,prop:derived_length_proj}.

Let \(X\) be a smooth compact complex surface, which is not K\"ahler.
In particular, \(X\) is not rational or ruled.
Thus, there is a unique minimal surface \(X'\) bimeromorphic to \(X\) such that
\[
	\Aut(X) \subseteq \Bim(X) \simeq \Bim(X') = \Aut(X').
\]
See \cite{prokhorov2021automorphism}*{Proposition~3.5}.
It suffices for us to prove \cref{prop:main_t_jordan,thm:main_tits_alternative,thm:main_derived_length}
for minimal surfaces (in the non-K\"ahler setting).
Note that non-K\"ahler minimal surfaces are surfaces of class \Romannum{7},
(primary or secondary) Kodaira surfaces and some properly elliptic surfaces
(cf.~\cite{barth2004compact}*{IV.~Theorem~3.1 and VI.~Theorem~1.1}).
Since every minimal surface of class \Romannum{7} with vanishing \(b_2\)
is either a Hopf surface or an Inoue surface (cf.~\cite{bogomolov1976classification})
and
minimal surface of class \Romannum{7} with algebraic dimension \(1\) is a Hopf surface
(cf.~\cite{barth2004compact}*{V.~Theorem~18.6}),
minimal surfaces of class \Romannum{7} not in \(\Xi\) are exactly Hopf surfaces or Inoue surfaces.
\begin{table}[htbp]
	\caption{non-K\"ahler minimal smooth compact complex surfaces}
	\begin{tabular}{lccccc}
		\hline
		class of the surface \(X\)     & \(\kappa(X)\) & \(a(X)\) & \(b_1(X)\) & \(b_2(X)\) & \(e(X)\)   \\ \hline
		surfaces of class \Romannum{7} & \(-\infty\)   & \(0,1\)  & \(1\)      & \(\geq 0\) & \(\geq 0\) \\
		primary Kodaira surfaces       & \(0\)         & \(1\)    & \(3\)      & \(4\)      & \(0\)      \\
		secondary Kodaira surfaces     & \(0\)         & \(1\)    & \(1\)      & \(0\)      & \(0\)      \\
		properly elliptic surfaces     & \(1\)         & \(1\)    &            &            & \(\geq 0\) \\ \hline
	\end{tabular}
\end{table}

Then \Cref{prop:main_t_jordan} follows from \cref{thm:elliptic_fibration_all,cor:alg_dim_zero_contains_curves,thm:t_jordan_hopf,thm:t_jordan_inoue_sm,thm:t_jordan_inoue_spm};
\Cref{thm:main_tits_alternative} follows from
\cref{thm:inoue_hirzebruch,thm:elliptic_fibration_all,thm:enoki,thm:tits_alternative_hopf,thm:tits_altervative_derived_length_inoue_sm,thm:tits_altervative_derived_length_inoue_spm};
and \cref{thm:main_derived_length} follows from
\cref{thm:inoue_hirzebruch,thm:elliptic_fibration_all,thm:enoki,thm:derived_length_hopf,thm:tits_altervative_derived_length_inoue_sm,thm:tits_altervative_derived_length_inoue_spm}.
Finally, \cref{prop:main_class_vii_no_curve}
has been proved in \cref{sec:class_vii_surfaces_with_positive_b2}.

% References
% \bibliographystyle{ieeetr}
% \bibliography{Jiabibtex.bib}

\begin{bibdiv}
	\begin{biblist}

		\bib{barth2004compact}{book}{
			author={Barth, Wolf~Paul},
			author={Hulek, Klaus},
			author={Peters, Chris A.~M.},
			author={Ven, Antonius},
			title={Compact Complex Surfaces},
			publisher={Springer Berlin Heidelberg},
			address={Berlin, Heidelberg},
			date={2004},
			ISBN={978-3-540-00832-3 978-3-642-57739-0},
		}

		\bib{bierstone1997canonical}{article}{
			author={Bierstone, Edward},
			author={Milman, Pierre~D.},
			title={Canonical desingularization in characteristic zero by blowing up the maximum strata of a local invariant},
			date={1997},
			ISSN={0020-9910, 1432-1297},
			journal={Inventiones Mathematicae},
			volume={128},
			number={2},
			pages={207\ndash 302},
		}

		\bib{bogomolov1976classification}{article}{
			author={Bogomolov, Fedor~Alekseevich},
			title={Classification of surfaces of class \Romannum{7}\(_0\) with \(b_2 = 0\)},
			date={1976},
			ISSN={0025-5726},
			journal={Math. USSR Izv.},
			volume={10},
			number={2},
			pages={255\ndash 269},
		}

		\bib{borel2012linear}{book}{
			author={Borel, Armand},
			title={Linear algebraic groups},
			publisher={Springer New York, NY},
			date={2012},
			ISBN={978-1-4612-6954-0},
		}

		\bib{campana2013automorphism}{article}{
			author={Campana, Frederic},
			author={Wang, Fei},
			author={Zhang, De-Qi},
			title={Automorphism groups of positive entropy on projective threefolds},
			date={2013},
			ISSN={0002-9947, 1088-6850},
			journal={Trans. Amer. Math. Soc.},
			volume={366},
			number={3},
			pages={1621\ndash 1638},
		}

		\bib{dinh2015compact}{article}{
			author={Dinh, Tien-Cuong},
			author={Hu, Fei},
			author={Zhang, De-Qi},
			title={Compact K\"ahler manifolds admitting large solvable groups of automorphisms},
			date={2015},
			ISSN={00018708},
			journal={Advances in Mathematics},
			volume={281},
			pages={333\ndash 352},
		}

		\bib{dloussky1999classification}{incollection}{
			title={Classification of singular germs of mappings and deformations of compact surfaces of class \Romannum{7}\(_0\)},
			author={Dloussky, Georges},
			author={Kohler, Franz},
			booktitle={Annales Polonici Mathematici},
			volume={70},
			pages={49\ndash 83},
			date={1998}
		}

		\bib{dinh2022zero}{article}{
			author={Dinh, Tien-Cuong},
			author={Lin, Hsueh-Yung},
			author={Oguiso, Keiji},
			author={Zhang, De-Qi},
			title={Zero entropy automorphisms of compact K\"ahler manifolds and dynamical filtrations},
			date={2022},
			ISSN={1016-443X, 1420-8970},
			journal={Geom. Funct. Anal.},
		}

		\bib{dloussky2003class}{article}{
			author={Dloussky, Georges},
			author={Oeljeklaus, Karl},
			author={Toma, Matei},
			title={Class \Romannum{7}\(_0\) surfaces with \(b_2\) curves},
			date={2003},
			ISSN={0040-8735},
			journal={Tohoku Math. J. (2)},
			volume={55},
			number={2},
			pages={283\ndash 309},
		}

		\bib{enoki1981surfaces}{article}{
			author={Enoki, Ichiro},
			title={Surfaces of class \Romannum{7}\(_0\) surfaces with curves},
			date={1981},
			journal={Tohoku Math. J. (2)},
			volume={33},
			number={4},
			pages={453\ndash 492},
		}

		\bib{fujiki2009automorphisms}{misc}{
			author={Fujiki, Akira},
			title={Automorphisms of parabolic Inoue surfaces},
			institution={{arXiv}},
			date={2009},
			number={arXiv:0903.5374},
		}

		\bib{fujiki1978automorphism}{article}{
			author={Fujiki, Akira},
			title={On automorphism groups of compact K\"ahler manifolds},
			date={1978},
			journal={Invent. Math.},
			volume={44},
			pages={225\ndash 258},
		}

		\bib{hall1950topology}{article}{
			author={Hall, Marshall},
			title={A topology for free groups and related Groups},
			date={1950},
			ISSN={0003486X},
			journal={The Annals of Mathematics},
			volume={52},
			number={1},
			pages={127},
		}

		\bib{iitaka1976logarithmic}{article}{
			author={Iitaka, Shigeru},
			title={Logarithmic forms of algebraic varieties},
			date={1976},
			journal={J. Fac. Sci. Univ. Tokyo Sect. IA Math,},
		}

		\bib{inoue1974surfaces}{article}{
			author={Inoue, Masahisa},
			title={On surfaces of class \Romannum{7}\(_0\)},
			date={1974},
			ISSN={0020-9910, 1432-1297},
			journal={Invent Math},
			volume={24},
			number={4},
			pages={269\ndash 310},
		}

		\bib{inoue1977new}{incollection}{
			author={Inoue, Masahisa},
			title={New surfaces with no meromorphic functions, \Romannum{2}},
			date={1977},
			booktitle={Complex Analysis and Algebraic Geometry: A Collection of Papers Dedicated to K. Kodaira},
			publisher={Cambridge University Press},
			pages={91\ndash 106},
		}

		\bib{jia2022equivariant}{article}{
			author={Jia, Jia},
			author={Meng, Sheng},
			title={Equivariant K\"ahler model for Fujiki's class},
			date={2022},
			journal={arXiv},
			eprint={2201.06748},
		}

		\bib{kodaira1964structure}{article}{
			author={Kodaira, Kunihiko},
			title={On the structure of compact complex analytic surfaces, \Romannum{1}},
			date={1964},
			journal={Amer. J. Math.},
			volume={86},
			pages={751\ndash 798},
		}

		\bib{kodaira1966structure}{article}{
			author={Kodaira, Kunihiko},
			title={On the structure of compact complex analytic surfaces, \Romannum{2}},
			date={1966},
			journal={Amer. J. Math.},
			volume={88},
			pages={682\ndash 721},
		}

		\bib{lee1976torsion}{article}{
			author={Lee, Dong~Hoon},
			title={On torsion subgroups of Lie groups},
			date={1976},
			journal={Proc. Amer. Math. Soc.},
			volume={55},
			pages={3},
		}

		\bib{lieberman1978compactness}{incollection}{
			author={Lieberman, David~I.},
			title={Compactness of the Chow scheme: Applications to automorphisms and deformations of K\"ahler manifolds},
			date={1978},
			booktitle={Fonctions de Plusieurs Variables Complexes \Romannum{3}},
			editor={Norguet, Fran{\c c}ois},
			volume={670},
			publisher={{Springer}},
			address={{Berlin, Heidelberg}},
			pages={140\ndash 186},
		}

		\bib{matumoto2000explicit}{article}{
			author={Matumoto, Takao},
			author={Nakagawa, Noriaki},
			title={Explicit description of hopf surfaces and their automorphism groups},
			date={2000},
			journal={Osaka J. Math.},
			volume={37},
			number={2},
			pages={417\ndash 424},
		}

		\bib{meng2022jordan}{article}{
			author={Meng, Sheng},
			author={Perroni, Fabio},
			author={Zhang, De-Qi},
			title={Jordan property for automorphism groups of compact spaces in Fujiki's class \(\mathcal{C}\)},
			date={2022},
			ISSN={1753-8416, 1753-8424},
			journal={Journal of Topology},
			volume={15},
			number={2},
			pages={806\ndash 814},
		}

		\bib{nakamura1990surfaces}{article}{
			author={Nakamura, Iku},
			title={On surfaces of class \Romannum{7}\(_0\) with curves, \Romannum{2}},
			date={1990},
			ISSN={0040-8735},
			journal={Tohoku Math. J. (2)},
			volume={42},
			number={4},
		}

		\bib{namba1974automorphism}{article}{
			author={Namba, Makoto},
			title={Automorphism groups of Hopf surfaces},
			date={1974},
			ISSN={0040-8735},
			journal={Tohoku Math. J. (2)},
			volume={26},
			number={1},
		}

		\bib{oguiso2006tits}{article}{
			author={Oguiso, Keiji},
			title={Tits alternative in hypek\"ahler manifolds},
			date={2006},
			ISSN={10732780, 1945001X},
			journal={Mathematical Research Letters},
			volume={13},
			number={2},
			pages={307\ndash 316},
		}

		\bib{pinkham1984automorphisms}{article}{
			author={Pinkham, Henry~Charles},
			title={Automorphisms of cusps and Inoue-Hirzebruch surfaces},
			date={1984},
			journal={Compositio Mathematica},
			volume={52},
			pages={299\ndash 313},
		}

		\bib{popov2011makarlimanov}{incollection}{
			author={Popov, Vladimir~Leonidovich},
			title={On the Makar-Limanov, Derksen invariants, and finite automorphism groups of algebraic varieties},
			date={2011},
			booktitle={Affine Algebraic Geometry},
			editor={Daigle, Daniel},
			editor={Ganong, Richard},
			editor={Koras, Mariusz},
			series={CRM Proceedings and Lecture Notes},
			volume={54},
			publisher={{American Mathematical Society}},
			address={{Providence, Rhode Island}},
		}

		\bib{prokhorov2020automorphism}{article}{
			author={Prokhorov, Yuri},
			author={Shramov, Constantin},
			title={Automorphism groups of Inoue and Kodaira surfaces},
			date={2020},
			ISSN={10936106, 19450036},
			journal={Asian J. Math.},
			volume={24},
			number={2},
			pages={355\ndash 368},
		}

		\bib{prokhorov2020bounded}{article}{
			author={Prokhorov, Yuri},
			author={Shramov, Constantin},
			title={Bounded automorphism groups of compact complex surfaces},
			date={2020},
			ISSN={1064-5616, 1468-4802},
			journal={Sb. Math.},
			volume={211},
			number={9},
			pages={1310\ndash 1322},
			eprint={1909.12013},
		}

		\bib{prokhorov2021automorphism}{article}{
			author={Prokhorov, Yuri},
			author={Shramov, Constantin},
			title={Automorphism groups of compact complex surfaces},
			date={2021},
			ISSN={1073-7928, 1687-0247},
			journal={Int. Math. Res. Not. IMRN},
			volume={2021},
			number={14},
			pages={10490\ndash 10520},
		}

		\bib{shramov2020finite}{article}{
			author={Shramov, Constantin},
			title={Finite groups acting on elliptic surfaces},
			date={2020},
			journal={arXiv},
			eprint={1907.01816},
		}

		\bib{teleman2019nonkahlerian}{incollection}{
			author={Teleman, Andrei},
			title={Non-K\"ahlerian compact complex surfaces},
			date={2019},
			booktitle={Complex Non-K\"ahler Geometry},
			editor={Angella, Daniele},
			editor={Arosio, Leandro},
			editor={Di Nezza, Eleonora},
			editor={Teleman, Andrei},
			volume={2276},
			pages={121\ndash 161},
			publisher={Springer},
			address={Italy},
			series={Lecture Notes in Mathematics},
		}

		\bib{tits1972free}{article}{
			author={Tits, Jacques},
			title={Free subgroups in linear groups},
			date={1972},
			ISSN={00218693},
			journal={Journal of Algebra},
			volume={20},
			number={2},
			pages={250\ndash 270},
		}

		\bib{wehler1981versal}{article}{
			author={Wehler, Joachim},
			title={Versal deformation of Hopf surfaces},
			date={1981},
			ISSN={0075-4102, 1435-5345},
			journal={J. Reine Angew. Math.},
			volume={1981},
			number={328},
			pages={22\ndash 32},
		}

		\bib{zhang2013algebraic}{article}{
			author={Zhang, De-Qi},
			title={Algebraic varieties with automorphism groups of maximal rank},
			date={2013},
			ISSN={0025-5831, 1432-1807},
			journal={Math. Ann.},
			volume={355},
			number={1},
			pages={131\ndash 146},
		}

	\end{biblist}
\end{bibdiv}

\end{document}